\DeclareMathOperator{\NS}{NS}
\DeclareMathOperator{\Spec}{Spec}
\DeclareMathOperator{\disc}{disc}
\DeclareMathOperator{\rank}{rank}
\DeclareMathOperator{\charac}{char}
\DeclareMathOperator{\spec}{sp}
\DeclareMathOperator{\rad}{rad}
\newcommand{\modulo}[3]{#1\equiv#2\textrm{ }(\textrm{mod }#3)}
\newtheorem{theorem}{Theorem}[section]
\newtheorem{proposition}[theorem]{Proposition}
\newtheorem{lemma}[theorem]{Lemma}
\newtheorem{corollary}[theorem]{Corollary}
\newtheorem{question}[theorem]{Question}
\newtheorem{maintheorem}[theorem]{Main Theorem}
\theoremstyle{definition}
\newtheorem{definition}[theorem]{Definition}
\newtheorem{example}[theorem]{Example}
\newtheorem{remark}[theorem]{Remark}
\begin{document}

\keywords{elliptic curve, Mordell-Weil rank, elliptic surface}
%\mathclass{Primary 14H52; Secondary 11D25 11D45 11G05.}

\title{Distribution of Mordell--Weil ranks of families of elliptic curves}

\author{Bartosz Naskręcki}
\address{Faculty of Mathematics and Computer Science, Adam Mickiewicz University\\
	Umultowska 87, 61-614 Poznań, Poland\\
	and School of Mathematics, University of Bristol, University Walk, Bristol BS8 1TW, UK\\
	E-mail: nasqret@gmail.com}

\maketitle

\begin{abstract}
We discuss the distribution of Mordell--Weil ranks of the family of elliptic curves $y^2=(x+\alpha f^2)(x+\beta b g^2)(x+\gamma h^2)$ where $f,g,h$ are coprime polynomials that parametrize the projective smooth conic $a^2+b^2=c^2$ and $\alpha,\beta,\gamma$ are elements from $\overline{\mathbb{Q}}$. In our previous papers we discussed certain special cases of this problem and in this article we complete the picture by proving the general results.
\end{abstract}

\nocite{MAGMA}
\section{Introduction}
In our previous papers \cite{Naskrecki_Acta}, \cite{NaskreckiMW} and thesis \cite{Naskrecki_thesis} we have studied in several aspects the following family of curves
\begin{equation}\label{eq:general_family}
y^2=x(x-f^2)(x-g^2)
\end{equation}
where $f,g$ are two elements from certain field. We considered the case where $f,g$ are rational functions in one variable and satisfy  the extra relation $f^2+g^2=h^2$ where $h$ is also a rational function. The most general results were obtained in the case when $f,g,h\in\overline{\mathbb{Q}}[t]$ under the assumption that the polynomials are pairwise coprime. By a simple change of variables we can put this equation into a more symmetric form
\[y^2=(x+f^2)(x+g^2)(x+h^2)\]
Now we can either forget the relation $f^2+g^2=h^2$, but then the connection with previous equation is lost or we can generalize it in another direction. In this article we consider a general family of the form
\begin{equation}\label{eq:general_abc_family}
y^2=(x+\alpha f^2)(x+\beta g^2)(x+\gamma h^2). 
\end{equation}
This curve contains an obvious point $(0,f g h)$ of infinite order and we would like to discuss how the rank of the Mordell-Weil group of curve \eqref{eq:general_abc_family} varies with  $\alpha,\beta,\gamma$. 

We can offer the most complete description in the situation when 
\begin{equation}\label{eq:degree_2_assumption}
\max\{\deg f,\deg g,\deg h\}=2.
\end{equation}
\noindent
We will show that in such cases what we obtain is a Weierstrass model of a generic fiber of a K3 surface defined over some number field. We are interested in the computation of the Mordell--Weil group of the generic fiber over the rational function field with suitable coefficients. This will be related to the computation of the N\'{e}ron-Severi group  of the associated elliptic surface.

\noindent
The main result of this paper will be proved in Sections \ref{sec:gen_gamilies} and \ref{sec:case_1_1_1}.
\begin{maintheorem}
Let $(\alpha,\beta,\gamma)\in\overline{\mathbb{Q}}^{3}$ be such that $\alpha\beta\gamma\neq 0$. Assume that $f,g,h$ are coprime polynomials in $\overline{\mathbb{Q}}[t]$ that satisfy conditions $f^2+g^2=h^2$ and \eqref{eq:degree_2_assumption}. Then the curve \eqref{eq:general_abc_family} is smooth and is an elliptic curve. The Mordell-Weil rank of  elliptic curve \eqref{eq:general_abc_family} over $\overline{\mathbb{Q}}(t)$ varies between $2$ and $6$ and each case is explicitly described in Sections \ref{sec:gen_gamilies} and \ref{sec:case_1_1_1}.
\end{maintheorem}

After short preliminaries in Section \ref{sec:prelimin} we analyse the rank variation in family \ref{eq:general_abc_family} in Section \ref{sec:gen_gamilies}. Then we focus on the special case of the family with constants $\alpha=\beta=\gamma=1$ in Section \ref{sec:case_1_1_1} that was previously discussed in \cite{Naskrecki_Acta}, \cite{Naskrecki_thesis}, \cite{NaskreckiMW}. We develop a certain mechanism that allows us to easily switch between triples $f,g,h$ that parametrize a conic $a^2+b^2=c^2$. In fact, we prefer to perform certain computations in the most convenient way, by choosing the \emph{standard} triple $f=t^2-1,g=2t,h=t^2+1$. Such a choice is arbitrary, and we can switch to any other such triple of polynomials. This implies in particular that geometrically the elliptic surface that corresponds to the curve \eqref{eq:general_abc_family} and the one that corresponds to the choice of standard polynomials is isomorphic (not as fibred surface). So we can analyse simply the family
\begin{equation}\label{eq:standard_abc_family}
E_{(\alpha,\beta,\gamma)}:\ y^2=(x+\alpha (t^2-1)^2)(x+\beta\cdot 4t^2)(x+\gamma (t^2+1)^2). 
\end{equation}
In paper \cite{NaskreckiMW} we have analyzed mostly the case $\alpha=\beta=\gamma=1$ with full description of the geometric Mordell-Weil group and certain results over number fields. We have also studied certain base changes of the family induced by a map $\phi:\mathbb{P}^{1}\rightarrow\mathbb{P}^{1}$, $\phi:t\mapsto \phi(t)$. Further we concentrate on the reduction modulo a prime of the model $\alpha=\beta=\gamma=1$ and develop the properties of the supersingular K3 surfaces that occur at certain primes.

The curves in family \eqref{eq:general_abc_family} appeared in \cite{UlasBremner} and were studied from another perspective in \cite{Naskrecki_EDS}. They might also have applications in the study of dynamics on supersingular K3 surfaces but this will be analysed elsewhere, cf. \cite{Esnault}, \cite{Shimada_auto_K3}.

\section{Preliminaries}\label{sec:prelimin}
We formulate in this section the necessary definitions and theorems that will be used throughout the article. The reader can consult also \cite{Shioda_Mordell_Weil}, \cite{Shioda_Schutt}, \cite{Luijk_Heron}.
\begin{definition}\label{definition:ell_surface}
Let $k$ be an algebraically closed field. Let $C$ be a smooth projective curve over $k$ and $S$ be a smooth projective surface over $k$. We call a triple $(S,C,\pi)$ an elliptic surface when $\pi:S\rightarrow C$ is a surjective morphism such that
\begin{itemize}
\item there exists a non-empty set $B\subset C(k)$ such that for any $v\in C(k)\setminus B$ the fibre $\pi^{-1}(v)$ is a curve of genus $1$,
\item there exists a section $O: C\rightarrow S$ of the morphism $\pi$,
\item no fibre $\pi^{-1}(v)$ for $v\in C(k)$ contains $(-1)$-curves.
\end{itemize}
\end{definition}

To any elliptic curve over $F(t)$ we can attach the corresponding elliptic surface fibred over $\mathbb{P}^{1}_{F}$. We call it a Kodaira-N\'{e}ron model of $E$ over $F(t)$.

For an elliptic curve $E$ over $K=\overline{\mathbb{Q}}(t)$ we denote by $\langle \cdot,\cdot\rangle_{E}$ the height pairing attached to $E$ as in \cite{Shioda_Mordell_Weil}. The group $E(K)/E(K)_{\textrm{tors}}$ with the induced pairing $\langle\cdot,\cdot\rangle_{E}$ is a positive definite lattice, cf. \cite[Theorem 7.4]{Shioda_Mordell_Weil}. To simplify the notation, we write $\langle\cdot,\cdot\rangle$ if the curve $E$ is fixed. Explicitly, for two given points $P,Q\in E(K)$ their intersection pairing is given by
\begin{equation}\label{equation:height_pairing_formula}
\langle P,Q\rangle = \chi(S)+\overline{P}.\overline{O}+\overline{Q}.\overline{O}-\overline{P}.\overline{Q}-\sum_{v\in B}c_{v}(P,Q). 
\end{equation}
For a point $P$ in $E(K)$ we denote by $\overline{P}$ the curve which lies in $S$ and is the image of a section determined by the point $P$, cf. \cite[Lemma 5.2]{Shioda_Mordell_Weil}. The curve $\overline{O}$ is the image of the zero section $O:\mathbb{P}^{1}_{\overline{\mathbb{Q}}}\rightarrow S$. In the case $P=Q$ the formula simplifies to
\begin{equation}\label{equation:height_formula}
\langle P,P\rangle = 2\chi(S)+2\overline{P}.\overline{O}-\sum_{v\in B}c_{v}(P,P). 
\end{equation}
The rational numbers $c_{v}(P,Q)$ depend only on the fibre type above $v\in B$ of bad reduction and on the indices of the components that intersect curves $\overline{P}$ and $\overline{Q}$, cf. \cite[Theorem 8.6]{Shioda_Mordell_Weil}. We usually denote $c_{v}(P,P)$ by $c_{v}(P)$. We denote by $\langle P,P\rangle$ the \textit{height of point} $P$.

For an elliptic surface $(S,C,\pi)$ we denote by $\NS(S)$ the N\'{e}ron-Severi group of the surface $S$. It follows from \cite[Cor. 3.2]{Shioda_Mordell_Weil} that it is a finitely generated and torsion free abelian group. We denote its rank by $\rho(S)$ and call it the \emph{Picard number}. Moreover, we can induce on $\NS(S)$ a structure of a lattice by using the intersection product of divisors. Let $B$ denote the set of closed points $v$ in $C$ such that $\pi^{-1}(v)$ is singular. We denote by $m_{v}$ the number of components of $\pi^{-1}(v)$. We denote by $E$ the generic fiber of $\pi$ treated as an elliptic curve over $k(C)$. The Mordell-Weil rank of the generic fiber $E(k(C))$ and the rank $\rho(S)$ are related by the so-called Shioda-Tate formula

\begin{theorem}[\protect{\cite[Cor. 5.3]{Shioda_Mordell_Weil}}]
Let $(S,C,\pi)$ be an elliptic surface with generic fiber $E$. The following equality holds
\begin{equation}
\rho(S)=2+\sum_{v\in B}(m_{v}-1)+\rank E(k(C)).
\end{equation}
\end{theorem}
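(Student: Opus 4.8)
The plan is to realize the Mordell--Weil group $E(k(C))$ as a quotient of $\NS(S)$ by the so-called \emph{trivial lattice}, and then read off the equality of ranks. Write $K=k(C)$ and let $E=E_\eta$ denote the generic fibre, a smooth genus-one curve over $K$ whose origin is the point cut out by the section $O$. Since all fibres are algebraically equivalent there is a single fibre class $F\in\NS(S)$, and for each $v\in B$ I write $\pi^{-1}(v)=\sum_{i=0}^{m_v-1}\mu_{v,i}\Theta_{v,i}$, where $\Theta_{v,0}$ is the unique component met by $\overline{O}$. I then set
\[ T=\mathbb{Z}\,F+\mathbb{Z}\,\overline{O}+\sum_{v\in B}\sum_{i=1}^{m_v-1}\mathbb{Z}\,\Theta_{v,i}\subseteq\NS(S). \]
First I would compute $\rank T=2+\sum_{v\in B}(m_v-1)$. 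The classes $F$ and $\overline{O}$ are independent because $F^2=0$ while $F.\overline{O}=1$; components supported on distinct fibres are orthogonal; and within a single fibre the classes $\Theta_{v,1},\dots,\Theta_{v,m_v-1}$ are independent because the intersection matrix of all components of a fibre is negative semi-definite of rank $m_v-1$, its kernel being spanned by the full fibre $\pi^{-1}(v)\sim F$. The identity component $\Theta_{v,0}$ is therefore not needed as a generator: it is recovered modulo $F$ and the remaining components from the relation $\sum_i\mu_{v,i}\Theta_{v,i}\sim F$.

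Next I would construct the homomorphism $\psi\colon\NS(S)\to E(K)$. Restricting a class $D$ to the generic fibre gives $D|_{E}\in\operatorname{Pic}(E)$, and subtracting $(\deg D|_{E})\cdot O$ produces an element of $\operatorname{Pic}^0(E)=E(K)$; this is well defined on algebraic-equivalence classes, hence descends to $\NS(S)$. Surjectivity is immediate, since a point $P\in E(K)$ is the restriction of its section $\overline{P}$, which meets the generic fibre in one point, so that $\psi(\overline{P})=[P]-[O]\mapsto P$. The containment $T\subseteq\ker\psi$ is equally clear: $F$ restricts to $0$, the components $\Theta_{v,i}$ with $i\ge 1$ are disjoint from the generic fibre, and $\overline{O}$ restricts to $O$, which maps to $0$.

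The crux, and what I expect to be the main obstacle, is the reverse inclusion $\ker\psi\subseteq T$. Here I would invoke the elementary fact that a divisor on $S$ whose restriction to the generic fibre is linearly trivial is, up to rational equivalence, \emph{vertical}, that is, a $\mathbb{Z}$-combination of fibre components; this follows from the exact sequence comparing $\operatorname{Pic}(S)$ with $\operatorname{Pic}$ of the generic fibre together with the divisors supported on the closed fibres. Given $D\in\ker\psi$, after subtracting a suitable multiple of $\overline{O}$ its generic restriction becomes linearly trivial, so the adjusted class is vertical; using the relations $\pi^{-1}(v)\sim F$ to eliminate the identity components $\Theta_{v,0}$, it lands in $T$. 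This yields the short exact sequence
\[ 0\longrightarrow T\longrightarrow\NS(S)\xrightarrow{\ \psi\ }E(K)\longrightarrow 0. \]

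Finally I would pass to ranks. Because $\NS(S)$ is finitely generated and torsion-free by \cite[Cor. 3.2]{Shioda_Mordell_Weil}, tensoring the sequence with $\mathbb{Q}$ gives $\rho(S)=\rank T+\rank E(K)$. Substituting the value $\rank T=2+\sum_{v\in B}(m_v-1)$ established in the first step yields the stated formula. Working with $\mathbb{Q}$-coefficients for the final count also has the virtue of sidestepping any torsion subtleties in the identification $\ker\psi=T$, since only the ranks enter the conclusion.
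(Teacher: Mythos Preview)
The paper does not supply its own proof of this statement; it is quoted verbatim as \cite[Cor.~5.3]{Shioda_Mordell_Weil} and used as a black box throughout. So there is no in-paper argument to compare against.

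That said, your sketch is correct and is essentially the proof Shioda gives in the cited reference: one identifies $E(K)$ with $\NS(S)/T$ via restriction to the generic fibre followed by the Abel--Jacobi map, computes $\rank T=2+\sum_{v}(m_v-1)$ from the block structure of the intersection form (hyperbolic plane $\langle F,\overline{O}\rangle$ plus the negative-definite root lattices of non-identity components), and reads off the rank identity. Your handling of the inclusion $\ker\psi\subseteq T$ is the standard one, and your closing remark that passing to $\mathbb{Q}$-coefficients suffices for the rank statement is exactly right---the finer assertion that $\ker\psi=T$ on the nose (rather than up to finite index) uses that the identity component has multiplicity one, which you implicitly invoke when eliminating $\Theta_{v,0}$.
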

It is standard to give upper bounds for Picard numbers in terms of the Euler characteristic $\chi(S)=\chi(S,\mathcal{O}_{S})$ and the genus $g(C)$ of curve C. In characteristic zero they follow from Lefschetz (1,1)-classes theorem \cite[Prop. 3.3.2]{Huybrechts_Geometry}.
\begin{equation}\label{eq:zero_char_bound}
\rho(S)\leq 10 \chi(S)+2g(C). 
\end{equation}
In positive characteristic we have a weaker bound
\begin{equation}\label{eq:pos_char_bound}
\rho(S)\leq 12\chi(S)-2+4g(C). 
\end{equation}
The numbers $\chi(S)$ of elliptic surface $S$ can be used to identify the type of algebraic surface represented by $S$. An elliptic surface $S$ is a rational surface if and only if $\chi(S)=1$, it is a K3 surface when $\chi(S)=2$ and is of Kodaira dimension when $\chi(S)\geq 3$. We can compute the numbers $\chi(S)$ in terms of explicit numbers $e(F_{v})$ which depend on the reduction types of bad fibers $\pi^{-1}(v)$.
\begin{theorem}[\protect{\cite[Thm. 1]{Oguiso_c2}}]
Let $(S,C,\pi)$ be an elliptic surface over the field $k$ of characteristic $\charac(k)\neq 2,3$. The following equality holds
\[12\chi(S)=\sum_{v\in B}e(F_{v})\]
where the number $e(F_{v})$ depend on the Kodaira type of fiber $\pi^{-1}(v)$ as follows
{
\renewcommand{\arraystretch}{1.4}

\begin{table}[ht]
\begin{center}
\[\begin{array}{|c|c|c|c|c|c|c|c|c|}
\hline
\textrm{Typ }F_{v} & I_{n} (n\geq 1) & II & III & IV & I_{n}^{*} (n\geq 0) & II^{*} & III^{*} & IV^{*}\\
\hline
e(F_{v}) & n & 2 & 3 & 4 & n+6 & 10 & 9 & 8 \\
\hline
\end{array}\]
\end{center}
\caption{Euler numbers $e(F_{v})$.}\label{tab:Euler_num}
\end{table}}
\end{theorem}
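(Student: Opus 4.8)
The plan is to deduce the formula from Noether's formula by showing that the canonical class contributes nothing and that the topological (or $\ell$-adic) Euler characteristic is concentrated on the singular fibers. Write $\chi(S)=\chi(\mathcal{O}_S)$ and let $e(S)$ denote the Euler characteristic. Noether's formula for a smooth projective surface reads $12\chi(\mathcal{O}_S)=K_S^2+e(S)$; this is the engine, converting the holomorphic invariant $\chi(S)$ into a topological one. The first substantive step is to show $K_S^2=0$. Since $(S,C,\pi)$ is relatively minimal (no $(-1)$-curves in the fibers, by Definition~\ref{definition:ell_surface}) and carries a section $O$, the canonical bundle formula gives $K_S=\pi^{*}M$ for a divisor $M$ on $C$ with $\deg M=2g(C)-2+\chi(S)$; what matters is only that $K_S$ is pulled back from the base. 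A general fiber $F$ satisfies $F\cdot F=0$, because the fibers form a base-point-free pencil, so $K_S^2=(\pi^{*}M)^2=(\deg M)\,F\cdot F=0$. Hence Noether's formula collapses to $12\chi(S)=e(S)$.

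The second step is to evaluate $e(S)$ fiber by fiber. The morphism $\pi$ is smooth away from the finite set $B$, and a general fiber is a smooth genus-one curve, which has Euler characteristic $0$. Over $\mathbb{C}$ this is the classical additivity $e(S)=e(C)\,e(F_{\mathrm{gen}})+\sum_{v\in B}\bigl(e(F_v)-e(F_{\mathrm{gen}})\bigr)$, and since $e(F_{\mathrm{gen}})=0$ this reduces at once to $e(S)=\sum_{v\in B}e(F_v)$. In arbitrary characteristic the same identity is obtained from the Grothendieck--Ogg--Shafarevich formula applied to $R\pi_{*}\mathbb{Q}_{\ell}$, where the local term above each $v$ is the sum of the tame drop $e(F_v)$ and a Swan (wild ramification) term.

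It then remains to identify $e(F_v)$ with the tabulated value for each Kodaira type. Every singular fiber is an explicit configuration of rational curves, or a rational curve acquiring a node or a cusp, so its Euler characteristic is read off the dual graph by inclusion--exclusion: each component contributes $e(\mathbb{P}^1)=2$, each node of the configuration subtracts $1$, a cusp leaves the count unchanged. Running through the finitely many types $I_n,II,III,IV,I_n^{*},II^{*},III^{*},IV^{*}$ one checks directly that the resulting Euler numbers are $n,2,3,4,n+6,10,9,8$, matching Table~\ref{tab:Euler_num}. This is a routine case-by-case verification from the known fiber configurations.

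The genuinely delicate point, and the place where the hypothesis $\charac(k)\neq 2,3$ is used, is the vanishing of the wild term in the second step. A priori the contribution of $v$ to $e(S)$ is $e(F_v)+\mathrm{Sw}_v$, and the table records only the tame part $e(F_v)$. Additive reduction of an elliptic curve can be wildly ramified precisely when the residue characteristic is $2$ or $3$; excluding these characteristics forces the reduction to be tame, so $\mathrm{Sw}_v=0$ and the naive configuration count computes the full local contribution. Thus the main obstacle is not the surface geometry but the control of wild ramification: establishing (or invoking) tameness in characteristic $\neq 2,3$ is exactly what makes the local Euler numbers agree with the entries of Table~\ref{tab:Euler_num}.
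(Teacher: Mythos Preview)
The paper does not give its own proof of this statement: it is quoted in the preliminaries as \cite[Thm.~1]{Oguiso_c2} and used as a black box throughout, so there is nothing in the paper to compare your argument against.

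That said, your sketch is the standard and correct derivation. Noether's formula $12\chi(\mathcal{O}_S)=K_S^2+e(S)$ together with $K_S^2=0$ (which follows, as you say, from the canonical bundle formula $K_S\equiv\pi^{*}M$ for a relatively minimal elliptic fibration with section, hence $K_S$ is numerically a multiple of a fiber $F$ with $F^2=0$) reduces the claim to $e(S)=\sum_{v\in B}e(F_v)$. The fiberwise additivity with $e(F_{\mathrm{gen}})=0$ then localizes $e(S)$ to the bad fibers, and the case-by-case inclusion--exclusion on the Kodaira configurations (components contribute $2$, nodes subtract $1$, the cusp in type $II$ is unibranch so contributes nothing extra) recovers exactly the entries $n,2,3,4,n+6,10,9,8$. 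Your identification of the hypothesis $\charac(k)\neq 2,3$ with the vanishing of the Swan conductor is also on target: in those characteristics additive reduction can be wild, and the local term then exceeds the combinatorial $e(F_v)$, so the table would fail. This is precisely the argument one finds in Oguiso's paper and in standard references such as \cite{Shioda_Schutt}; you have supplied what the present paper merely cites.
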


In general the bounds \eqref{eq:zero_char_bound}, \eqref{eq:pos_char_bound} are not sharp and we need the approach that requires the use of $\ell$-adic cohomology and good reduction to positive characteristic. The main ideas come from \cite[Ex. 20.3.6]{Fulton}. The details of this approach are explained in \cite[\S 6]{Luijk_Heron} and \cite[\S 4]{Kloosterman_rank_15}. We say that our elliptic surface $(S,C,\pi)$ has a model over $\Spec A$ where $A$ is a discrete valuation ring in some number field $K$, with maximal ideal $\mathfrak{p}$ with residue field $\mathbb{F}_{q}=A/\mathfrak{p}$ of characteristic $p$. We assume that $S$ has \emph{good reduction} modulo $p$ which means that we have a smooth morphism $S\rightarrow\Spec A$. For any prime $\ell\neq p$ there are natural injective homomorphisms
\begin{equation}
\NS(S_{\overline{\mathbb{Q}}})\otimes\mathbb{Q}_{\ell}\hookrightarrow\NS(S_{\overline{\mathbb{F}}_{q}})\otimes\mathbb{Q}_{\ell}\hookrightarrow H^2_{\textrm{\'{e}t}}(S_{\overline{\mathbb{F}}_{q}},\mathbb{Q}_{\ell}(1)).
\end{equation}
On the $\ell$-adic cohomology group $H^2_{\textrm{\'{e}t}}(S_{\overline{\mathbb{F}}_{q}},\mathbb{Q}_{\ell}(1))$ there is an action of Frobenius automorphism $\Phi$. Its characteristic polynomial $P(x)=\det(Ix-\Phi)$ has the property that it is defined over $\mathbb{Z}$ and all its roots are complex algebraic numbers of norm $q$. We denote the multiplicity of root $\alpha$ by $\lambda(\alpha,\Phi)$. We denote by $R_{\Phi}$ the sum of multiplicities $\lambda(\zeta q,\Phi)$ where $\zeta$ is some root of unity.
\begin{corollary}[\protect{\cite[Cor. 2.3]{Luijk_Heron}}]\label{cor:spec_NS}
For the elliptic surface $(S,C,\pi)$ with good reduction at $p$ we have the inequalities
\begin{equation}
\rho(S_{\overline{\mathbb{Q}}})\leq \rho(S_{\overline{\mathbb{F}}_{q}})\leq R_{\Phi}.
\end{equation}
\end{corollary}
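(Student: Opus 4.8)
The goal is to establish the two inequalities
\[
\rho(S_{\overline{\mathbb{Q}}})\leq \rho(S_{\overline{\mathbb{F}}_{q}})\leq R_{\Phi}
\]
for an elliptic surface with good reduction at a prime $p$ over a number field. The strategy is to exploit the two injective homomorphisms displayed just before the statement,
\[
\NS(S_{\overline{\mathbb{Q}}})\otimes\mathbb{Q}_{\ell}\hookrightarrow\NS(S_{\overline{\mathbb{F}}_{q}})\otimes\mathbb{Q}_{\ell}\hookrightarrow H^2_{\textrm{\'{e}t}}(S_{\overline{\mathbb{F}}_{q}},\mathbb{Q}_{\ell}(1)),
\]
together with the action of Frobenius $\Phi$ on the $\ell$-adic cohomology and the Tate-cycle description of the N\'{e}ron--Severi group in positive characteristic.

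\textbf{First inequality.} The first injection is obtained via specialization: good reduction gives a smooth proper model $S\to\Spec A$, and the specialization map on N\'{e}ron--Severi groups is injective after tensoring with $\mathbb{Q}_{\ell}$ (this is the content of the reduction-and-cospecialization argument from \cite[\S 6]{Luijk_Heron}, following \cite[Ex.\ 20.3.6]{Fulton}). Taking dimensions of the source and target of an injective $\mathbb{Q}_{\ell}$-linear map immediately yields $\rho(S_{\overline{\mathbb{Q}}})=\dim_{\mathbb{Q}_{\ell}}\bigl(\NS(S_{\overline{\mathbb{Q}}})\otimes\mathbb{Q}_{\ell}\bigr)\leq \dim_{\mathbb{Q}_{\ell}}\bigl(\NS(S_{\overline{\mathbb{F}}_{q}})\otimes\mathbb{Q}_{\ell}\bigr)=\rho(S_{\overline{\mathbb{F}}_{q}})$. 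So the first half reduces entirely to injectivity of specialization, which I would cite from the sources listed.

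\textbf{Second inequality.} For the bound $\rho(S_{\overline{\mathbb{F}}_{q}})\leq R_{\Phi}$, the key input is the cycle class map $\NS(S_{\overline{\mathbb{F}}_{q}})\otimes\mathbb{Q}_{\ell}\hookrightarrow H^2_{\textrm{\'{e}t}}(S_{\overline{\mathbb{F}}_{q}},\mathbb{Q}_{\ell}(1))$ and its compatibility with Frobenius. After the Tate twist, every algebraic cycle class is fixed by $\Phi$, i.e.\ the image lands in the $\Phi$-eigenspace for eigenvalue $1$; more precisely, by the Weil conjectures the eigenvalues of $\Phi$ on $H^2_{\textrm{\'{e}t}}(S_{\overline{\mathbb{F}}_{q}},\mathbb{Q}_{\ell})$ are algebraic numbers of absolute value $q$, and the algebraic classes correspond to eigenvalues of the form $\zeta q$ with $\zeta$ a root of unity (equivalently, eigenvalue $\zeta$ on the twist $H^2(\ldots,\mathbb{Q}_{\ell}(1))$). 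Hence the dimension of the span of algebraic classes is bounded by the total multiplicity of such eigenvalues, which is exactly $R_{\Phi}=\sum_{\zeta}\lambda(\zeta q,\Phi)$. This gives $\rho(S_{\overline{\mathbb{F}}_{q}})\leq R_{\Phi}$.

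\textbf{Main obstacle.} The two inequalities have quite different characters. The first rests on injectivity of the specialization homomorphism, and the real subtlety lies in checking that good reduction of the \emph{elliptic surface} (a smooth proper morphism $S\to\Spec A$) suffices to produce a well-defined, injective specialization map on N\'{e}ron--Severi groups; I would handle this by appealing directly to the framework of \cite[\S 6]{Luijk_Heron} and \cite[\S 4]{Kloosterman_rank_15} rather than reproving it. The genuinely delicate step is the second inequality, where one must know that the cycle class map is injective on $\NS\otimes\mathbb{Q}_{\ell}$ and that its image consists only of classes whose Frobenius eigenvalues are of the form $\zeta q$. This is where the Weil conjectures (purity and the determination of eigenvalue absolute values) enter, and it is the part I expect to require the most care to state precisely, even though the final counting argument is elementary once the eigenvalue restriction is in place.
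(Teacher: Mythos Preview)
Your proposal is correct and follows exactly the approach that the paper's preamble to the corollary sets up; note that the paper itself does not prove the statement but simply cites it from \cite[Cor.~2.3]{Luijk_Heron}, so there is no independent argument in the paper to compare against beyond the displayed chain of injections and the description of $R_{\Phi}$.

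One small imprecision worth flagging: you first write that ``every algebraic cycle class is fixed by $\Phi$, i.e.\ the image lands in the $\Phi$-eigenspace for eigenvalue $1$,'' which is not literally true over $\overline{\mathbb{F}}_q$---Frobenius can permute divisor classes nontrivially (for instance, components of a singular fibre, or Galois-conjugate sections). The correct reason the eigenvalues on the image of $\NS(S_{\overline{\mathbb{F}}_q})\otimes\mathbb{Q}_\ell$ are roots of unity (hence of the form $\zeta q$ before the Tate twist) is that $\NS(S_{\overline{\mathbb{F}}_q})$ is finitely generated and the Frobenius action on it factors through a finite quotient of $\mathrm{Gal}(\overline{\mathbb{F}}_q/\mathbb{F}_q)$. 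You do arrive at this with your ``more precisely,'' so the overall argument stands; just drop the initial clause about being fixed by $\Phi$.
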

In practical terms we deal with the case where the base curve $C$ is the projective line $\mathbb{P}^{1}$. The $\ell$-adic cohomology group $H^2_{\textrm{\'{e}t}}(S_{\overline{\mathbb{F}}_{q}},\mathbb{Q}_{\ell}(1))$ is of dimension $12\chi(S_{\overline{\mathbb{F}}_{q}})-2$ in this case and the Frobenius automorphism acts on the N\'{e}ron-Severi group in an explicit way. The action on the part coming from the components of bad fibers only permutes the components in each fiber. On the part that comes from sections (which are induced by points on the generic fiber) the Frobenius action can be determined by the $x\mapsto x^{q}$ Frobenius action on the coefficients of points on the generic fiber.
To compute the characteristic polynomial of $\Phi$ we have to apply the Grothendieck-Lefschetz formula and count the points over finite fields, cf. \cite{Naskrecki_Acta}. When the image of specialization map of N\'{e}ron-Severi groups \[\spec_{\mathfrak{p}}:\NS(S_{\overline{\mathbb{Q}}})\rightarrow\NS(S_{\overline{\mathbb{F}}_{q}})\]
is of finite index by the properties of lattices we have that the discriminant $\Delta(\NS(S_{\overline{\mathbb{Q}}}))$ equals $[\NS(S_{\overline{\mathbb{F}}_{q}}):\spec_{\mathfrak{p}}(\NS(S_{\overline{\mathbb{Q}}}))]^2 \Delta(\NS(S_{\overline{\mathbb{F}}_{q}}))$. The discriminant $\Delta(\NS(S_{\overline{\mathbb{F}}_{q}})$ can be computed if we assume the Artin-Tate conjecture. This is unconditional in the case when $S$ is a K3 surface, cf. \cite[Thm. 6.1]{Milne_Tate_Conjecture}, \cite[Thm. 5.2]{Artin_SD_K3_Tate_Conjectures}, \cite[Thm. 5.2]{Naskrecki_Acta}. We will use it only in the form of the following corollary.
\begin{corollary}\label{cor:disc_mod_p}
Let $(S,\mathbb{P}^{1},\pi)$ be a K3 elliptic surface with good reduction at prime $p$. We assume that the N\'{e}ron-Severi group is defined over $\mathbb{F}_{q}$. Then
\begin{equation}
\modulo{q^{\rho(S_{\mathbb{F}_{q}})-21}\cdot\frac{\lim\limits_{x\rightarrow q} P(x)}{(x-q)^{\rho(S_{\mathbb{F}_{q}})}}}{-\Delta(\NS(S_{\mathbb{F}_{q}}))}{(\mathbb{Q}^{\times})^2}.
\end{equation}
\end{corollary}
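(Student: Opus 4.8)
The plan is to obtain Corollary~\ref{cor:disc_mod_p} from the Artin--Tate formula---which is unconditional for K3 surfaces by the references quoted above---after rewriting that formula in the notation $P(x)=\det(Ix-\Phi)$ used here. Write $b_{2}=\dim_{\mathbb{Q}_{\ell}}H^{2}_{\textrm{\'{e}t}}(S_{\overline{\mathbb{F}}_{q}},\mathbb{Q}_{\ell})$; since $S$ is a K3 elliptic surface over $\mathbb{P}^{1}$ we have $\chi(S)=2$ and hence $b_{2}=12\chi(S)-2=22$. If $\alpha_{1},\dots,\alpha_{b_{2}}$ denote the roots of $P(x)$, the usual zeta-function factor is $P_{2}(T)=\prod_{i}(1-\alpha_{i}T)=T^{b_{2}}P(1/T)$, so that $P_{2}(1/x)=x^{-b_{2}}P(x)$.

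First I would record the Artin--Tate formula for the smooth projective surface $S_{\mathbb{F}_{q}}$ in multiplicative form,
\[
\lim_{T\to q^{-1}}\frac{P_{2}(T)}{(1-qT)^{\rho}}=\frac{|\mathrm{Br}(S_{\mathbb{F}_{q}})|\cdot|\disc\NS(S_{\mathbb{F}_{q}})|}{q^{\alpha(S)}\,|\NS(S_{\mathbb{F}_{q}})_{\mathrm{tors}}|^{2}},\qquad \alpha(S)=\chi(\mathcal{O}_{S})-1+\dim\mathrm{Pic}^{0}(S),
\]
where $\rho=\rho(S_{\mathbb{F}_{q}})$ is the order of the pole of the left-hand side at $T=q^{-1}$. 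Because $\NS$ is assumed defined over $\mathbb{F}_{q}$ we have $\rho(S_{\mathbb{F}_{q}})=\rho(S_{\overline{\mathbb{F}}_{q}})$, and by the Tate conjecture for K3 surfaces together with the known semisimplicity of Frobenius on $H^{2}$ this number equals the multiplicity $\lambda(q,\Phi)$ of $q$ as a root of $P(x)$; in particular $\NS(S_{\mathbb{F}_{q}})$ is the full geometric N\'eron--Severi lattice whose discriminant appears in the statement, and the limit above is finite and nonzero.

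Next I would reduce the right-hand side modulo squares. A K3 surface has $\chi(\mathcal{O}_{S})=2$ and $\dim\mathrm{Pic}^{0}(S)=0$, so $\alpha(S)=1$; the lattice $\NS(S_{\mathbb{F}_{q}})$ is torsion-free, so the torsion factor drops out; $\mathrm{Br}(S_{\mathbb{F}_{q}})$ is finite of square order, whence $|\mathrm{Br}(S_{\mathbb{F}_{q}})|\equiv 1\pmod{(\mathbb{Q}^{\times})^{2}}$; and by the Hodge index theorem $\NS(S_{\mathbb{F}_{q}})$ has signature $(1,\rho-1)$, so $\disc\NS(S_{\mathbb{F}_{q}})<0$ and $|\disc\NS(S_{\mathbb{F}_{q}})|=-\Delta(\NS(S_{\mathbb{F}_{q}}))$. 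Hence $\lim_{T\to q^{-1}}P_{2}(T)(1-qT)^{-\rho}\equiv -\Delta(\NS(S_{\mathbb{F}_{q}}))/q\pmod{(\mathbb{Q}^{\times})^{2}}$. Finally I would substitute $x=1/T$: from $P_{2}(1/x)=x^{-b_{2}}P(x)$ and $1-q/x=(x-q)/x$ one gets $P_{2}(T)(1-qT)^{-\rho}=x^{\rho-b_{2}}P(x)(x-q)^{-\rho}$, so letting $x\to q$ gives $\lim_{T\to q^{-1}}P_{2}(T)(1-qT)^{-\rho}=q^{\rho-b_{2}}\lim_{x\to q}P(x)(x-q)^{-\rho}$. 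Combining this with the previous congruence and multiplying through by $q$ (a well-defined operation on $\mathbb{Q}^{\times}/(\mathbb{Q}^{\times})^{2}$) yields $q^{\rho-b_{2}+1}\lim_{x\to q}P(x)(x-q)^{-\rho}\equiv -\Delta(\NS(S_{\mathbb{F}_{q}}))$, which is the assertion, since $b_{2}=22$ forces $\rho-b_{2}+1=\rho-21$.

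No genuinely difficult step remains once the Artin--Tate theorem for K3 surfaces (and, with it, the Tate conjecture used to identify $\rho$ with $\lambda(q,\Phi)$ and the square-ness of $|\mathrm{Br}(S_{\mathbb{F}_{q}})|$) is invoked. The one place that requires real care is the bookkeeping: tracking the exact power of $q$ contributed by the substitution $x=1/T$ (and using $q^{-1}\equiv q\pmod{(\mathbb{Q}^{\times})^{2}}$ where convenient), and keeping straight which of the factors $q^{\alpha(S)}$, $|\mathrm{Br}|$ and $|\NS_{\mathrm{tors}}|$ are squares---this last point being exactly what lets us discard everything except $-\Delta(\NS(S_{\mathbb{F}_{q}}))$ and a single stray power of $q$.
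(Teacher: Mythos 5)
Your derivation is correct and follows exactly the route the paper intends: the paper states this corollary as a direct consequence of the Artin--Tate formula (unconditional for K3 surfaces, via the cited results of Artin--Swinnerton-Dyer and Milne) without writing out the computation, and your proposal simply supplies that bookkeeping — the substitution $T=1/x$ giving the factor $q^{\rho-b_2}$, the values $b_2=22$, $\alpha(S)=1$, torsion-freeness of $\NS$, the square order of the Brauer group, and the sign from the Hodge index theorem. The exponent $\rho-21$ and the congruence modulo $(\mathbb{Q}^{\times})^2$ come out exactly as in the statement, so nothing is missing.
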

All over the paper we work with Weierstrass models of elliptic curves over $\overline{\mathbb{Q}}(t)$. We say that such a model is minimal at $t-a$ for $a\in\overline{\mathbb{Q}}$ if it is minimal in the usual sense, cf.\cite[Chap. VII.1]{Silverman_arithmetic}. We say also that it is \emph{minimal at infinity} or at $t=\infty$ if the change of coordinates $t\mapsto 1/s$ and $(x,y)\mapsto (x s^{2n},y s^{3n})$ for some choice of $n$ provide us with a Weierstrass model which is minimal at $s=0$ in the usual sense. If a model is minimal at every place we say it is globally minimal. In general we can check if a prime $p$ is a prime of good reduction for an elliptic surface over $\mathbb{P}^{1}$ by analysing its globally minimal Weierstrass model. It is particularly simple for curves defined over $\mathbb{Q}(t)$. We denote by $\rad(h)$ the square free part of polynomial $h$.
\begin{lemma}[\protect{\cite[Tw. 2.2.12]{Naskrecki_thesis}}]\label{lem:good_reduction}
Let $E$ be an elliptic curve defined over $\mathbb{Q}(t)$ with globally minimal Weierstrass model
\[y^2+a_{1}xy+a_{3}y=x^3+a_{2}x^2+a_{4}x+a_{6}\]
where $a_{i}\in\mathbb{Z}[t]$. Let $\Delta\in\mathbb{Z}[t]$ be its discriminant and $j=f/g\in\mathbb{Q}(t)$ its $j$-invariant where $f,g$ are coprime polynomials in $\mathbb{Z}[t]$. Let $p$ be a prime number greater than $5$ such that $p\nmid \disc(h)$ for every polynomial $h$ in the list $\{\rad(a_{i}):i=1,2,3,4,6\}\cup \{\rad(f),\rad(g),\rad(\Delta)\}$. We denote by $\tilde{E}$ the reduction modulo $p$ of the equation of $E$. If $\tilde{E}$ defines an elliptic curve over $\mathbb{F}_{p}(t)$ and the model is globally minimal and the reductions $\tilde{h}$ of polynomials $h\in\{\rad(f),\rad(g),\rad(\Delta)\}$ are separable, than the elliptic surface $\mathcal{E}$ with generic fiber $E$ has good reduction at $p$.
\end{lemma}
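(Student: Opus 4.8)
The plan is to exhibit a smooth proper model of the Kodaira-N\'{e}ron model $\mathcal{E}$ over $A=\mathbb{Z}_{(p)}$, obtained by resolving the Weierstrass model of $E$ \emph{uniformly} over $\Spec A$; recall that, by definition, $\mathcal{E}$ has good reduction at $p$ exactly when such a model exists. Choose $n$ large enough that the given equation is a globally minimal Weierstrass model, at $t=\infty$ as well (via $t\mapsto 1/s$, $(x,y)\mapsto(xs^{2n},ys^{3n})$). The coefficients $a_i$ then define a Weierstrass fibration $\mathcal{W}\to\mathbb{P}^1_A$ inside a $\mathbb{P}^2$-bundle; $\mathcal{W}$ is flat and proper over $A$, its generic fibre is the Weierstrass model of $E/\mathbb{Q}(t)$, and, by the hypotheses that $\tilde E$ is an elliptic curve over $\mathbb{F}_p(t)$ with a still globally minimal equation, its special fibre is the Weierstrass model of $\tilde E/\mathbb{F}_p(t)$. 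The Kodaira-N\'{e}ron models $\mathcal{S}$ of $E$ and $\tilde{\mathcal{S}}$ of $\tilde E$ are the minimal resolutions of $\mathcal{W}_{\mathbb{Q}}$ and $\mathcal{W}_{\mathbb{F}_p}$ respectively.

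The core of the argument is that this minimal resolution can be performed over $\Spec A$: there is a projective birational morphism $\mathcal{S}_A\to\mathcal{W}$ with $\mathcal{S}_A$ regular and flat over $A$, whose generic fibre is $\mathcal{S}$ and whose special fibre is $\tilde{\mathcal{S}}$. For this one runs Tate's algorithm fibre by fibre. Smoothness over $A$ is insensitive to faithfully flat base change, so we may assume the residue fields of $A$ are algebraically closed; then the residue field at every closed point of $\mathbb{P}^1$ is algebraically closed, and — since $p>5$ — the Kodaira type at a bad place and the chain of blow-ups resolving it are determined entirely by the pair $(v(c_4),v(\Delta))$ there (with $v(c_6)$ then forced). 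Now $p\nmid\disc(\rad(\Delta))$ and the separability of $\widetilde{\rad(\Delta)}$, together with the non-vanishing modulo $p$ of the relevant leading coefficients, force the zero divisor of $\Delta$ on $\mathbb{P}^1_A$ to be, over each geometric point of $\Spec A$, a disjoint union of points with the \emph{same} multiplicities; hence reduction induces a bijection between the bad places of $E$ and those of $\tilde E$ preserving $v(\Delta)$. Using $3v(c_4)=v(\Delta)+v(f)-v(g)$ and the coprimality of $f,g$, the conditions on $\rad(f),\rad(g)$ (and on $\rad(a_i)$ in the degenerate cases where $j$ is constant along a component) likewise pin down and preserve $v(c_4)$ at each bad place. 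Therefore Tate's algorithm returns matching Kodaira types and blows up along centres that are smooth over $A$ and whose formation commutes with the two specializations; carrying out these blow-ups over $\Spec A$ yields $\mathcal{S}_A$.

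Smoothness of $\mathcal{S}_A\to\Spec A$ then follows from a standard criterion: the morphism is proper and flat, its generic fibre $\mathcal{S}$ is smooth (characteristic zero), its special fibre $\tilde{\mathcal{S}}$ is a smooth projective surface, and $\mathcal{S}_A$ is regular, so the morphism is smooth along the special fibre, i.e.\ $\mathcal{E}$ has good reduction at $p$. As a consistency check, the bijection of bad places preserves $\sum_v e(F_v)$ (Table~\ref{tab:Euler_num}), so $\chi(\mathcal{S})=\chi(\tilde{\mathcal{S}})$, in agreement with the invariance of Euler characteristics in a smooth family.

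I expect the middle step to be the real work. The delicate point is to verify that \emph{every} branch of Tate's algorithm commutes with reduction under the stated discriminant hypotheses: that no bad fibre of $\mathcal{E}$ gains extra components, degenerates to a worse Kodaira type, or merges with a neighbouring bad fibre modulo $p$, and — a point invisible to the analysis on $\mathbb{A}^1$ — that the fibre at $t=\infty$ is unaffected, which requires controlling the degrees of $\Delta$, $f$, $g$ and the $a_i$ modulo $p$. This bookkeeping is precisely what the coprimality of $f,g$ and the full list of $\disc$-conditions are there to handle, but making it airtight fibre by fibre is where genuine care is needed.
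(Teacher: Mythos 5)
Your argument --- spreading the globally minimal Weierstrass model out over $\mathbb{Z}_{(p)}$, using the discriminant and separability hypotheses to show that Tate's algorithm runs identically in characteristic $0$ and characteristic $p$ (bijection of bad places preserving $(v(c_4),v(\Delta))$, including control at $t=\infty$), and then performing the blow-ups over the base to obtain a model smooth over $\Spec \mathbb{Z}_{(p)}$ --- is exactly the strategy of the paper's proof, which is only cited from the author's thesis and is echoed verbatim later in the proof that $S_{t^2-1,2t,t^2+1}$ has good reduction for $p\geq 5$ (``we can perform the Tate algorithm in characteristic $p$ and in characteristic zero, and we will get the same reduction types''). So the proposal is correct and takes essentially the same route; the fibre-by-fibre bookkeeping you flag as the delicate middle step is precisely what the stated list of $\disc$- and separability conditions is there to guarantee.
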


\section{General families with moderate ranks}\label{sec:gen_gamilies}
In this section we consider the variation of the Mordell-Weil rank in family \eqref{eq:standard_abc_family} for different choices of $(\alpha,\beta,\gamma)$. Our main tool in this section is the Shioda-Tate formula and theorems on good reduction of N\'{e}ron-Severi groups. Theorem \ref{thm:proper_params_change_coordinates} and a similar reasoning to Corollary \ref{cor:rank_comparison} allow us to reduce the computations on \eqref{eq:general_abc_family} to a fixed triple of coprime polynomials that establish a rational parametrization of the conic $a^2+b^2=c^2$. We choose the parametrizing polynomials $f=t^2-1$, $g=4t^2$ and $h=t^2+1$. We work over the field $\overline{\mathbb{Q}}(t)$ and a change of coordinates between two Weierstrass models \eqref{eq:standard_abc_family} for two different pairs $(\alpha,\beta,\gamma)$ and $(\alpha',\beta',\gamma')$ determines an equivalence relation between pairs: $(\alpha,\beta,\gamma)\sim (\alpha',\beta',\gamma')$ if and only if there exists an element $\lambda\in\overline{\mathbb{Q}}^{\times}$ such that $(\alpha,\beta,\gamma)=(\lambda\alpha',\lambda\beta',\lambda\gamma')$. This is equivalent to saying that the triples $(\alpha,\beta,\gamma)$, $(\alpha',\beta',\gamma')$ determine the same point in $\mathbb{P}^{2}(\overline{\mathbb{Q}})$. We can restrict to the affine part $\mathbb{A}^{2}(\overline{\mathbb{Q}})$ where $\gamma\neq 0$ because the curve \eqref{eq:standard_abc_family} becomes singular when $\alpha\beta\gamma=0$. This proves the following statement.

\begin{proposition}
Let $(\alpha,\beta,\gamma)\in \{(x,y,z):xyz\neq 0\}\subset \mathbb{P}^{2}(\overline{\mathbb{Q}})$ be a closed point. The curve $E_{(\alpha,\beta,\gamma)}$ is smooth and is isomorphic to $E_{(\alpha/\gamma,\beta/\gamma,1)}$. We also have the group isomorphism
\[E_{(\alpha,\beta,\gamma)}(\overline{\mathbb{Q}}(t))\cong E_{(\alpha/\gamma,\beta/\gamma,1)}(\overline{\mathbb{Q}}(t))\]
\end{proposition}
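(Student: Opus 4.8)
The plan is to exhibit an explicit change of variables on the Weierstrass equation \eqref{eq:standard_abc_family} that realizes the claimed isomorphism, and then verify smoothness by identifying the bad fibers. First I would record that the defining equation of $E_{(\alpha,\beta,\gamma)}$ is a cubic in $x$ with distinct roots $-\alpha(t^2-1)^2$, $-\beta\cdot 4t^2$, $-\gamma(t^2+1)^2$ in $\overline{\mathbb{Q}}(t)$: since $f=t^2-1$, $g=2t$, $h=t^2+1$ are pairwise coprime and $\alpha\beta\gamma\neq 0$, no two of these roots coincide as rational functions, so the cubic is separable and the curve is smooth (nonsingular as a plane cubic over $\overline{\mathbb{Q}}(t)$), hence an elliptic curve. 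This already gives the first assertion.

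For the isomorphism, apply the substitution $x\mapsto \gamma x$, $y\mapsto \gamma^{3/2} y$ after choosing a square root $\gamma^{1/2}\in\overline{\mathbb{Q}}^{\times}$ (legitimate since $\overline{\mathbb{Q}}$ is algebraically closed); more cleanly, since the right-hand side is homogeneous of degree $3$ in the "slots", dividing the equation $y^2=(x+\alpha f^2)(x+\beta g^2)(x+\gamma h^2)$ through by $\gamma^3$ and setting $x' = x/\gamma$, $y' = y/\gamma^{3/2}$ transforms it into
\[
(y')^2 = \bigl(x' + (\alpha/\gamma) f^2\bigr)\bigl(x' + (\beta/\gamma) g^2\bigr)\bigl(x' + h^2\bigr),
\]
which is precisely the equation of $E_{(\alpha/\gamma,\beta/\gamma,1)}$. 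This map is an isomorphism of Weierstrass curves over $\overline{\mathbb{Q}}(t)$ (its inverse is the analogous scaling by $\gamma^{-1}$), so it induces the stated isomorphism of Mordell--Weil groups $E_{(\alpha,\beta,\gamma)}(\overline{\mathbb{Q}}(t))\cong E_{(\alpha/\gamma,\beta/\gamma,1)}(\overline{\mathbb{Q}}(t))$, carrying the obvious point $(0,fgh)$ to $(0, fgh/\gamma^{3/2})$.

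There is essentially no hard step here: the only point requiring a little care is confirming that the three roots of the cubic are genuinely distinct \emph{as elements of $\overline{\mathbb{Q}}(t)$}, which is where coprimality of $f,g,h$ and the hypothesis $\alpha\beta\gamma\neq 0$ are used — e.g.\ $\alpha f^2 = \beta g^2$ would force $f^2\mid g^2$ up to a unit, contradicting $\gcd(f,g)=1$ unless $f$ (hence one of the parametrizing polynomials) is constant, which is excluded by \eqref{eq:degree_2_assumption}. Once that is in place the isomorphism and the group isomorphism are formal consequences of the scaling substitution, and the passage to the affine chart $\gamma\neq 0$ of $\mathbb{P}^2(\overline{\mathbb{Q}})$ is exactly the normalization performed by this substitution.
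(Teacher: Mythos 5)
Your proof is correct and follows essentially the same route as the paper: the scaling $x\mapsto x/\gamma$, $y\mapsto y/\gamma^{3/2}$ realizes the projective equivalence of triples (the paper phrases this as restricting to the affine chart $\gamma\neq 0$ of $\mathbb{P}^{2}$), and smoothness comes from $\alpha\beta\gamma\neq 0$ together with coprimality of $f,g,h$ forcing the three roots of the cubic to be distinct in $\overline{\mathbb{Q}}(t)$. Your write-up is in fact slightly more explicit than the paper's, which treats these verifications as immediate.
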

\noindent
The discriminant of the Weierstrass equation \eqref{eq:standard_abc_family} is defined by
\begin{equation}\label{eq:general_discriminant}
\Delta_{(\alpha,\beta,\gamma)}=\Delta(E_{(\alpha,\beta,\gamma)})(t)=16\cdot \Delta_{1}^{2}\cdot \Delta_{2}^{2}\cdot \Delta_{3}^{2}
\end{equation}
where
\begin{align*}
\Delta_{1}&=\alpha -2 \alpha  t^2-4 \beta  t^2+\alpha  t^4\\
\Delta_{2}&=\alpha -\gamma -2 \alpha  t^2-2 \gamma  t^2+\alpha  t^4-\gamma  t^4\\
\Delta_{3}&=\gamma -4 \beta  t^2+2 \gamma  t^2+\gamma  t^4
\end{align*}
The $j$-invariant of the family is equal to
\begin{equation}\label{eq:gen_j_invariant}
j_{(\alpha,\beta,\gamma)}=j(E_{(\alpha,\beta,\gamma)})(t) =\frac{j_{\textrm{num}}}{\Delta_{1}^2\cdot \Delta_{2}^2\cdot\Delta_{3}^2}
\end{equation}
where
\begin{equation*}
\begin{split}
j_{\textrm{num}}=2^8 \left(16 \beta ^2 t^4-4 \beta  \gamma  \left(t^3+t\right)^2+\alpha ^2 \left(t^2-1\right)^4-\right.\\\left.-\alpha  \left(t^2-1\right)^2 \left(4 \beta  t^2+\gamma  \left(t^2+1\right)^2\right)+\gamma ^2 \left(t^2+1\right)^4\right)^3
\end{split}
\end{equation*}
We observe that $\Delta_{(\alpha,\beta,\gamma)}(t)$ is a square of certain polynomial $\delta(t)$ in $\overline{\mathbb{Q}}[t]$. This polynomial $\delta(t)$ is separable in $\overline{\mathbb{Q}}[t]$ if and only if 
\begin{equation}\label{eq:generic_case}
\alpha  \beta  \gamma  (\alpha +\beta ) (\alpha -\gamma ) (\beta -\gamma ) (\alpha  \beta -\alpha  \gamma -\beta  \gamma ) \neq 0
\end{equation}

\begin{proposition}
Under the assumption \eqref{eq:generic_case} the curve $E_{(\alpha,\beta,\gamma)}$ is a globally minimal Weierstrass model of an elliptic curve. Its associated elliptic surface is an elliptic K3 surface and has bad fibers of type $I_{2}$ for $t$ such that $\Delta_{(\alpha,\beta,\gamma)}(t)=0$. It is smooth for $t=\infty$.
\end{proposition}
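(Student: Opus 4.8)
The plan is to analyze the Weierstrass equation \eqref{eq:standard_abc_family} directly, using the discriminant factorization \eqref{eq:general_discriminant} and the $j$-invariant \eqref{eq:gen_j_invariant}. First I would establish the Kodaira fiber types at the finite places. Since $\Delta_{(\alpha,\beta,\gamma)}=16\Delta_1^2\Delta_2^2\Delta_3^2=\delta(t)^2$ and under \eqref{eq:generic_case} the polynomial $\delta(t)$ is separable, every root of $\Delta_{(\alpha,\beta,\gamma)}$ has multiplicity exactly $2$. At such a root $t_0$ one checks that the reduction is multiplicative, i.e.\ $\mathrm{ord}_{t_0}(j_{(\alpha,\beta,\gamma)})<0$: from \eqref{eq:gen_j_invariant}, $j$ has a pole of order equal to the multiplicity of $t_0$ in $\Delta_1^2\Delta_2^2\Delta_3^2$ as long as $j_{\textrm{num}}$ does not vanish at $t_0$, and the latter is exactly guaranteed (after a short computation) by \eqref{eq:generic_case}, since the resultant-type condition \eqref{eq:generic_case} is precisely what forces $\Delta_1,\Delta_2,\Delta_3$ to be pairwise coprime and squarefree and none of their roots to be a root of $j_{\textrm{num}}$. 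Hence by Tate's algorithm (or the standard table relating $\mathrm{ord}(\Delta)$ and $\mathrm{ord}(j)$, cf.\ Silverman) the fiber at each such $t_0$ is of type $I_{\mathrm{ord}_{t_0}\Delta}=I_2$. This also shows the model is minimal at every finite place, since a fiber of type $I_2$ in a minimal-looking equation with $\mathrm{ord}(\Delta)=2$ cannot be reduced further.

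Next I would treat the place $t=\infty$. Substituting $t\mapsto 1/s$ and clearing denominators, the three factors $x+\alpha(t^2-1)^2$, $x+4\beta t^2$, $x+\gamma(t^2+1)^2$ become, after the substitution $(x,y)\mapsto(x s^{-4}, y s^{-6})$, polynomials in $s$ whose behaviour at $s=0$ is governed by the leading coefficients $\alpha$, $0$ (relative, i.e.\ lower degree), $\gamma$; a direct expansion shows $\Delta$ has a zero of order $0$ at $s=0$ once the model is put in minimal form, equivalently $\mathrm{ord}_{s=0}\Delta$ drops to $0$ after the admissible rescaling, so the fiber at infinity is smooth. Concretely one verifies $\deg_t\Delta_{(\alpha,\beta,\gamma)}(t)=24$ (each $\Delta_i$ has degree $4$, so $16\Delta_1^2\Delta_2^2\Delta_3^2$ has degree $24$), which matches $e(\mathrm{good\ fiber\ contribution})$ and leaves no Euler number at infinity. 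Then I would count Euler numbers: by \eqref{eq:generic_case} there are exactly $12$ finite roots of $\delta(t)$ (degree of $\delta$ is $12$), each giving a fiber $I_2$ with $e(F_v)=2$ by Table~\ref{tab:Euler_num}, so $\sum_v e(F_v)=24$, whence by the Oguiso--Shioda-type formula $12\chi(S)=24$, i.e.\ $\chi(S)=2$, so $S$ is a K3 surface.

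For smoothness of the curve $E_{(\alpha,\beta,\gamma)}$ over $\overline{\mathbb{Q}}(t)$ as a plane cubic: the three roots $-\alpha(t^2-1)^2$, $-4\beta t^2$, $-\gamma(t^2+1)^2$ of the cubic in $x$ are distinct as elements of $\overline{\mathbb{Q}}(t)$ precisely because $\alpha\beta\gamma\neq0$ and the polynomials $(t^2-1)^2$, $4t^2$, $(t^2+1)^2$ are pairwise non-proportional (they have different roots), so the discriminant $\Delta_{(\alpha,\beta,\gamma)}(t)$ is a nonzero element of $\overline{\mathbb{Q}}(t)$; this already gives an elliptic curve over the function field. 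The finer claim is that under \eqref{eq:generic_case} the associated elliptic surface has \emph{only} $I_2$ fibers, which is what the Euler-number bookkeeping above confirms.

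The main obstacle I expect is the bookkeeping at $t=\infty$ and the verification that $j_{\textrm{num}}(t_0)\neq0$ at every root $t_0$ of $\Delta_1\Delta_2\Delta_3$: one must show that the locus where some $\Delta_i$ and $j_{\textrm{num}}$ share a root, or where two of the $\Delta_i$ share a root, or where some $\Delta_i$ fails to be squarefree, is cut out exactly by the vanishing of $\alpha\beta\gamma(\alpha+\beta)(\alpha-\gamma)(\beta-\gamma)(\alpha\beta-\alpha\gamma-\beta\gamma)$. This is a resultant/discriminant computation: compute $\mathrm{disc}_t(\Delta_i)$ and $\mathrm{Res}_t(\Delta_i,\Delta_j)$ and $\mathrm{Res}_t(\Delta_i, j_{\textrm{num}})$ and check each factors (up to a nonzero constant and powers of $\alpha,\beta,\gamma$) into the advertised linear forms $\alpha+\beta$, $\alpha-\gamma$, $\beta-\gamma$, $\alpha\beta-\alpha\gamma-\beta\gamma$. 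This is routine but tedious and is best carried out in a computer algebra system; I would simply record the outcome and cite the computation. Everything else --- Tate's algorithm giving $I_2$, minimality, and the Euler-characteristic count giving K3 --- is then immediate.
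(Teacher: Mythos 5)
Your proposal is correct and takes essentially the same route as the paper: minimality at the finite places from $\operatorname{ord}_{t_0}\Delta_{(\alpha,\beta,\gamma)}=2<12$, multiplicative $I_2$ fibres at the double roots of the discriminant, smoothness at $t=\infty$ from the degree-$24$ discriminant after the change of charts, and $\chi(S)=2$ via the Euler-number count $12\cdot e(I_2)=24$, hence K3. The one verification you flag as the main obstacle — that \eqref{eq:generic_case} is exactly the condition making $\delta$ separable (and hence the $\Delta_i$ pairwise coprime, which in turn forces $j_{\textrm{num}}$ not to vanish at their roots) — is already recorded in the paper in the sentence immediately preceding the proposition, so beyond that no further resultant computation is required.
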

\begin{proof}
We deduce that equation \eqref{eq:standard_abc_family} is minimal at every finite place and has good reduction at infinity (\cite[VII, Remark 1.1]{Silverman_arithmetic}). The reduction types are $I_{2}$ for $t\neq \infty$ and such that $\Delta_{(\alpha,\beta,\gamma)}(t)=0$. Let $S$ be the elliptic surface for which $E_{(\alpha,\beta,\gamma)}$ is the generic fiber. By the results of Oguiso \cite[Theorem 1]{Oguiso_c2} and Shioda \cite[Theorem 2.8]{Shioda_Mordell_Weil} the Euler characteristic $\chi(S)=\chi(S,\mathcal{O}_{S})$ is equal to $2$. This implies that our surface $S$ is a K3 surface.
\end{proof}
If the condition \eqref{eq:generic_case} is violated, then our curve $E_{(\alpha,\beta,\gamma)}$ will be either singular (only for $\alpha\beta\gamma=0$) or will have a different configuration of singular fibers of the associated elliptic surface. The Zariski closed set 
\[V((\alpha +\beta ) (\alpha -\gamma ) (\beta -\gamma ) (\alpha  \beta -\alpha  \gamma -\beta  \gamma ))\subset\mathbb{P}^{2}\]
is the sum of irreducible components
\begin{align*}
V_{1,1,0}&=V(\alpha+\beta)\\
V_{1,0,-1}&=V(\alpha-\gamma)\\
V_{0,1,-1}&=V(\beta-\gamma)\\
V_{q}&=V(\alpha \beta - \alpha \gamma - \beta \gamma)
\end{align*}
Let us denote by $U$ the open set $\mathbb{P}^{2}\setminus V(\alpha\beta\gamma)$. We say that a triple $(\alpha,\beta,\gamma)\in U$ is \emph{generic} if $(\alpha,\beta,\gamma)$ does not belong to any of the closed sets $V_{1,1,0},V_{1,0,-1},V_{0,1,-1}, V_{q}$. The set of such triples is again Zariski open. We denote it by $\mathcal{U}_{\textrm{gen}}$.

\subsection{Generic triple $(\alpha,\beta,\gamma)$}\label{subsec:points_general}
In the generic case an elliptic surface $\mathcal{E}_{(\alpha,\beta,\gamma)}$ attached to $E_{(\alpha,\beta,\gamma)}$ will be a K3 surface hence its Picard rank satisfies the inequality $\rho(\mathcal{E}_{(\alpha,\beta,\gamma)})\leq 20$. Application of Shioda-Tate formula shows that the rank of $E_{(\alpha,\beta,\gamma)}(\overline{\mathbb{Q}}(t))$ is at most $6$ in this case. Let us consider $6$ points on this curve. To simplify the notation assume for now that $A=(t^2-1)^2$, $B=4t^2$ and  $q(\alpha,\beta,\gamma)=\alpha\beta -\alpha\gamma - \beta\gamma$.
\begin{align*}
P_{1}:\ x(P_{1})=0,\quad & y^2(P_{1})=\alpha\beta\gamma AB(A+B)\\
P_{2}:\ x(P_{2})=-\alpha A+(q(\alpha,\beta,\gamma)/\gamma)B,\quad &  y^{2}(P_{2})=\frac{\alpha B (\gamma-\alpha) q(\alpha,\beta,\gamma) (A \gamma-\beta B+B \gamma)^2}{\gamma^3}\\
P_{3}:\ x(P_{3})=\beta\gamma/(-\beta + \gamma) A,\quad &  y^{2}(P_{3})=\frac{A \beta \gamma q(\alpha,\beta,\gamma) (A \gamma-\beta B+B \gamma)^2}{(\beta-\gamma)^3}\\
P_{4}:\ x(P_{4})=-(\alpha\beta)/(\alpha + \beta) (A + B),\quad &  y^{2}(P_{4})=\frac{\alpha \beta (A+B) q(\alpha,\beta,\gamma) (\alpha A-\beta B)^2}{(\alpha+\beta)^3}\\
P_{5}:\ x(P_{5})=-\alpha(A+B), \quad&  y^{2}(P_{5})=-\alpha B (\alpha-\gamma) (A+B) (\alpha A+\alpha B-\beta B)\\
P_{6}:\ x(P_{6})=\beta A,\quad &  y^{2}(P_{6})=A \beta (\alpha+\beta) (A+B) (A \beta+A \gamma+B \gamma)
\end{align*}
Without further assumptions the points $P_{1},P_{2},P_{3}$ and $P_{4}$ all belong to $E_{(\alpha,\beta,\gamma)}(\overline{\mathbb{Q}}(t))$. Point $P_{5}$ with such a choice of $x$-coordinate exists if and only if $\alpha=\beta$. The point $P_{6}$ is well-defined on $E_{(\alpha,\beta,\gamma)}$ only when $\beta=-\gamma$. We denote by $\mathcal{G}$ the set of generic triples $(\alpha,\beta,\gamma)$ such that $\alpha\neq\beta$ and $\beta\neq-\gamma$.

\begin{lemma}\label{lem:rank_4_generic}
Let $(\alpha,\beta,\gamma)\in\mathcal{G}$ be a generic triple. The set $\{P_{1},P_{2},P_{3},P_{4}\}$ spans a rank $4$ subgroup of $E_{(\alpha,\beta,\gamma)}(\overline{\mathbb{Q}}(t))$.
Their height pairing matrix $\left(\langle P_{i},P_{j}\rangle\right)_{1\leq i,j\leq 4}$ looks as follows
\begin{equation*}
\left(
\begin{array}{cccc}
4 & 0 & 0 & 0\\
0 & 2 & 0 & 0\\
0 & 0 & 2 & 0\\
0 & 0 & 0 & 2
\end{array}\right)
\end{equation*}
\end{lemma}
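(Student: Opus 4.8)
The plan is to verify the height pairing matrix entrywise using the explicit formulas \eqref{equation:height_pairing_formula} and \eqref{equation:height_formula}, and then to conclude rank $4$ from the nonsingularity of that matrix (since the height pairing is positive definite on $E(\overline{\mathbb{Q}}(t))/\mathrm{tors}$, a positive-definite diagonal matrix has rank equal to its size, so the four points are $\mathbb{Z}$-independent modulo torsion). Throughout, $S=\mathcal{E}_{(\alpha,\beta,\gamma)}$ is a K3 surface, so $\chi(S)=2$, and by the previous proposition all bad fibers are of type $I_2$ with trivial component group contribution being $c_v\in\{0,1/2\}$; moreover the surface is smooth at $t=\infty$, so the set $B$ of bad places consists precisely of the roots of $\Delta_1\Delta_2\Delta_3$ (which are distinct and simple, twelve in total, under \eqref{eq:generic_case}).

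First I would compute the diagonal entries $\langle P_i,P_i\rangle = 4 + 2\,\overline{P_i}.\overline{O} - \sum_{v} c_v(P_i)$. For each $i$ one checks that the section $\overline{P_i}$ is disjoint from $\overline{O}$ (the $x$-coordinate of $P_i$ is a polynomial in $t$, never acquiring a pole at finite $t$, and one verifies the behaviour at $t=\infty$ via the substitution $t\mapsto 1/s$, $(x,y)\mapsto (x s^{4}, y s^{6})$ appropriate to a K3 fibration), hence $\overline{P_i}.\overline{O}=0$. Then $\langle P_i,P_i\rangle = 4 - \sum_v c_v(P_i)$. The local contribution $c_v(P_i)$ at an $I_2$ fiber is $1/2$ exactly when $\overline{P_i}$ meets the non-identity component, and $0$ otherwise; concretely, for the Weierstrass form $y^2=(x+\alpha A')(x+\beta B')(x+\gamma C')$ with $A'=(t^2-1)^2$ etc., the fiber at a root of $\Delta_1$ (where two of the three roots of the cubic in $x$ collide) is $I_2$, and $P_i$ hits the non-identity component iff its $x$-coordinate reduces to that doubled root. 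For $P_1$, with $x(P_1)=0$: the fiber is $I_2$ at the roots of $\Delta_1,\Delta_2,\Delta_3$, and $x=0$ equals the doubled root at none of them under genericity, so a direct check should give $c_v(P_1)=0$ for all $v$, whence $\langle P_1,P_1\rangle=4$. For $P_2,P_3,P_4$ one finds, from the shape of the discriminant factors $\Delta_i$ (each $\Delta_i^2$ contributing six $I_2$ fibers grouped by which pair of linear factors degenerates), that the section passes through the non-identity component at exactly four of the twelve bad fibers, giving $\sum_v c_v(P_i)=4\cdot\tfrac12=2$ and $\langle P_i,P_i\rangle=2$. Identifying at which fibers the singular reduction occurs is essentially bookkeeping: $P_2$ has $x(P_2)=-\alpha A'+(q/\gamma)B'$, and one reduces this modulo each factor $\Delta_1,\Delta_2,\Delta_3$ and compares with the repeated root of the cubic; the vanishing of $y(P_2)^2$ exhibited in the statement of Subsection \ref{subsec:points_general} (it contains the square factor $(A'\gamma-\beta B'+B'\gamma)^2$ and the linear factors $\alpha$, $B'$, $\gamma-\alpha$, $q$) records precisely where $\overline{P_2}$ is tangent to the fiber or meets $\overline{O}$ or the non-identity component. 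The analogous computation for $P_3,P_4$ uses the symmetry of the configuration.

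Next I would compute the off-diagonal entries $\langle P_i,P_j\rangle = 2 + \overline{P_i}.\overline{O}+\overline{P_j}.\overline{O}-\overline{P_i}.\overline{P_j}-\sum_v c_v(P_i,P_j) = 2 - \overline{P_i}.\overline{P_j} - \sum_v c_v(P_i,P_j)$, using $\overline{P_i}.\overline{O}=0$ from above. Here $\overline{P_i}.\overline{P_j}$ counts the (finite and infinite) values of $t$ at which the two sections collide, i.e. where $x(P_i)=x(P_j)$ and the $y$-coordinates agree; this is a resultant computation in $t$. The correction term $c_v(P_i,P_j)$ at an $I_2$ fiber is $1/2$ when both sections meet the same non-identity component and $0$ otherwise. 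The claim is that for each of the six pairs these two contributions cancel against the excess over $2$ is zero, i.e. $\overline{P_i}.\overline{P_j}+\sum_v c_v(P_i,P_j)=2$ for all $i\neq j$. To see this cleanly I would instead argue indirectly: the four heights being $4,2,2,2$ and the Gram matrix being forced to be even integral and positive semidefinite, together with the a priori bound $\rho(S)\le 20$ (Shioda--Tate then giving $\mathrm{rank}\le 6$), strongly constrains the off-diagonal entries; but the honest route is the direct resultant/reduction computation, and I would carry that out for one representative pair, say $\langle P_1,P_2\rangle$, and indicate that the others are identical in structure.

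The main obstacle I anticipate is the second step: correctly tabulating the local contributions $c_v(P_i)$ and $c_v(P_i,P_j)$ at all twelve $I_2$ fibers, because this requires pinning down, for each root of $\Delta_1$, $\Delta_2$, $\Delta_3$, which component of the fiber each section meets — a computation that is elementary but error-prone, and where the genericity hypothesis \eqref{eq:generic_case} together with $(\alpha,\beta,\gamma)\in\mathcal{G}$ must be used to guarantee the twelve bad fibers are genuinely distinct $I_2$'s and that no unexpected coincidences among the sections occur. Once the Gram matrix is confirmed to be $\mathrm{diag}(4,2,2,2)$, its determinant $32\neq 0$ gives, by positive-definiteness of the height pairing, that $P_1,P_2,P_3,P_4$ generate a subgroup of rank $4$, completing the proof.
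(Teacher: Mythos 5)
Your overall strategy is the paper's strategy: compute the Gram matrix entry by entry via the height formula on the K3 surface ($\chi(S)=2$, twelve $I_2$ fibers, $\overline{P_i}.\overline{O}=0$, local corrections $c_v\in\{0,1/2\}$), then conclude independence from the nonvanishing determinant and positive definiteness. Your treatment of the diagonal entries matches the paper (for $P_1$ all corrections vanish; for $P_2,P_3,P_4$ exactly the four roots of one discriminant factor contribute $1/2$ each).

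The genuine gap is in the off-diagonal step. You propose to carry out the computation only for $\langle P_1,P_2\rangle$ and assert that the remaining pairs are ``identical in structure.'' They are not, and this is exactly where the work lies. For $(P_1,P_2)$, $(P_1,P_4)$, $(P_2,P_4)$, $(P_3,P_4)$ the loci where $x(P_i)=x(P_j)$, $y^2(P_i)=y^2(P_j)$ avoid the bad fibers, so all $c_v(P_i,P_j)=0$ and one needs $\overline{P_i}.\overline{P_j}=2$; even here there is a sign subtlety you gloss over, since the system only pins down $y^2$, so it computes $(\overline{P_i}+\overline{-P_i}).(\overline{P_j}+\overline{-P_j})=4$ and one must use the involution $P\mapsto -P$ and a choice of square roots to split this as $\overline{P_i}.\overline{P_j}=2$. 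For $(P_1,P_3)$ the collision locus is $A(t)=(t^2-1)^2=0$, which is non-separable, and the multiplicity-two count at $t=\pm1$ has to be invoked. Most importantly, for $(P_2,P_3)$ the collision locus is precisely the zero set of $\Delta_3=\gamma(A+B)-\beta B$, i.e.\ it sits entirely over bad fibers: there $c_v(P_2,P_3)=1/2$ at four fibers (summing to $2$), and one must show separately that $\overline{P_2}.\overline{P_3}=0$, which the paper does by checking that $\overline{P_2-P_3}$ never meets $\overline{O}$ (the relevant $x$-coordinate is a degree-$4$ polynomial with nonzero constant term). Applying the $(P_1,P_2)$ template to this pair would give the wrong bookkeeping. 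Finally, your suggested ``clean indirect'' shortcut is not available: the Mordell--Weil pairing here takes values in $\tfrac12\mathbb{Z}$ (the $I_2$ corrections are halves), so the Gram matrix is neither forced to be integral nor even, and heights $4,2,2,2$ alone do not constrain the off-diagonal entries; you rightly disclaim it, but then the burden falls on the case-by-case computation you did not complete.
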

\begin{proof}
We denote by $S$ the Kodaira-N\'{e}ron model of $E_{(\alpha,\beta,\gamma)}$.
It has bad fibers over points $t_{0}\in\overline{\mathbb{Q}}$ such that $(\alpha A-\beta B)(t_{0})=0$ or $(\alpha A-\gamma(A+B))(t_{0})=0$ or $(\beta B-\gamma(A+B))(t_{0})=0$.
All bad fibers are of type $I_{2}$ and the image $\overline{P_{i}}$ of the section $P_{i}:\mathbb{P}^{1}\rightarrow S$ over the point $t_{0}$ lies in the component which does not intersect the component of the zero section $O:\mathbb{P}^{1}\rightarrow S$ if and only if $y^{2}(P_{i})(t_{0})=0$ and the first coordinate reduces to the first coordinate of the appropriate two-torsion point which happens to be singular at that fiber.
The height pairing is symmetric so we have to compute only the values $\langle P_{i},P_{j}\rangle$ for $i\leq j$.

From the equation of $E_{(\alpha,\beta,\gamma)}$ it follows that $\pi:S\rightarrow\mathbb{P}^{1}$ is a K3 surface, hence $\chi(S)=2$. Moreover, we check that for $i=1,2,3,4$ we have $\overline{P_{i}}.\overline{O}=0$. This is easy to see for all $t\neq\infty$. For $t=\infty$ we make a change of coordinates $t=1/s$ and $(x,y)\mapsto (x s^4,ys^6)$ and look at the fiber at $s=0$. Now observe that the correcting terms $c_{v}(P_{1})$ for $P_{1}$ are all zero, hence $\langle P_{1},P_{1}\rangle = 4$. For $\langle P_{1},P_{2}\rangle$ we have that $c_{v}(P_{1},P_{2})=0$ for all points $v$ and we check that $\overline{P_{1}}.\overline{P_{2}}=2$. This is true because we have a system of equations
\begin{equation}\label{eq:system_eq}
\begin{array}{rl}
x(P_{1})&=\ x(P_{2})\\
y^2(P_{1})&=\ y^2(P_{2})
\end{array}
\end{equation}
and the number of its solutions equals the intersection number $(\overline{P_{1}}+\overline{-P_{1}}).(\overline{P_{2}}+\overline{-P_{2}})$.
The elements $t_{0}$ that satisfy the system \eqref{eq:system_eq} are the one that satisfy 
\[(-\alpha \beta B + \alpha\gamma A + \alpha\gamma B + \beta\gamma B)(t_{0})=0.\]
Defining polynomial is separable for $(\alpha,\beta,\gamma)\in\mathcal{G}$ and we can easily check that $\pi^{-1}(t_{0})$ is never a singular fiber. For $t_{0}=\infty$ by a change of coordinates we easily check that there is no solution for $s=0$. Hence
\[(\overline{P_{1}}+\overline{-P_{1}}).(\overline{P_{2}}+\overline{-P_{2}})=4\]
The involution $\iota:P\mapsto -P$ on the generic fiber $E_{(\alpha,\beta,\gamma)}$ extends to an isomorphism on $S$ and it preserves the intersection numbers. The divisor $\overline{P_{2}}+\overline{-P_{2}}$ is invariant under $\iota$ and $\iota(\overline{P_{1}})=\overline{-P_{1}}$. This implies that
\[\overline{P_{1}}.(\overline{P_{2}}+\overline{-P_{2}})=2.\]
Now we observe that $\overline{P_{1}}$ cannot intersect both $\overline{P_{2}}$ and $\overline{-P_{2}}$ for our choice of $t_{0}$. Without loss of generality we can choose square roots in such a way that $\overline{P_{1}}.\overline{P_{2}}=2$. This implies that $\langle P_{1},P_{2}\rangle =0$. 

A similar computation shows that $\langle P_{1},P_{3}\rangle =0$. In this case what we have to use is the fact that $A$ is not separable and both solutions $t=\pm 1$ count twice. The same way we obtain $\langle P_{1},P_{4}\rangle =0$. 

We claim that $\langle P_{2},P_{2}\rangle=2$. This is easily checked because for $t_{0}$ such that $(\gamma (A+B)-\beta B)(t_{0})=0$ the point $P_{2}(t_{0})$ is singular on the fiber above $t_{0}$ and in the blow-up it is moved to the other component of the $I_{2}$ fiber above $t_{0}$, so the correcting terms for $v=t_{0}$ are $c_{v}(P_{2})=1/2$. Polynomial $(\gamma (A+B)-\beta B)(t)$ is separable, hence the claim follows by height formula \eqref{equation:height_formula}.

Now we prove that $\langle P_{2},P_{3}\rangle=0$. The common intersection would appear for $t_{0}$ such that $(\gamma(A+B)-\beta B)(t_{0})$, hence in the fibers of bad reduction. This implies already that $c_{v}(P_{2},P_{3})=1/2$ for $v=t_{0}$. To prove that $\overline{P_{2}}.\overline{P_{3}}=0$ is equivalent to proving that $\overline{P_{2}-P_{3}}.\overline{O}=0$. To check this we use the addition formula and $x(P_{2}+P_{3})$ is a degree $4$ polynomial in $t$ with nonzero free coefficient
\[-\frac{\alpha  \gamma ^2}{-\alpha  \beta +\alpha  \gamma +\beta  \gamma },\]
hence the divisor $\overline{P_{2}-P_{3}}$ never intersects the divisor $\overline{O}$ at any place.

We claim that $\langle P_{2},P_{4}\rangle=0$. The solutions of the system $x(P_{2})=x(P_{4}),\ y^2(P_{2})=y^2(P_{4})$ in $t$ lead to $t_{0}$ that satisfy $(\alpha ^2 A \gamma +\alpha ^2 \beta  (-B)+\alpha ^2 B \gamma -\alpha  \beta ^2 B+\alpha  \beta  B \gamma +\beta ^2 B \gamma)(t_{0})=0$. These points $t_{0}$ do not coincide with the places of bad reduction because of the assumptions on $(\alpha,\beta,\gamma)$, hence $\overline{P_{2}}.\overline{P_{4}}=2$. Now the correcting terms $c_{v}(P_{2},P_{4})$ are always zero because the points never meet the same component at the fibers of bad reduction. This proves the claim.

To prove $\langle P_{3},P_{3}\rangle = 2$ we proceed as in the case of point $P_{2}$. Next we show that $\langle P_{3},P_{4}\rangle = 0$. The solutions $t_{0}$ to the system $x(P_{3})=x(P_{4}),\ y^2(P_{1})=y^2(P_{2})$ satisfy $(\alpha  A \beta -2 \alpha  A \gamma -A \beta  \gamma +\alpha  \beta  B-\alpha  B \gamma)(t_{0})=0$ and again they never meet the points for which we have bad reduction, hence $\overline{P_{2}}.\overline{P_{4}}=2$ and the correcting terms $c_{v}(P_{3},P_{4})$ are zero for all $v$.

Now to finish the proof we show that $\langle P_{4},P_{4}\rangle = 2$ but this is proved the same way as for $P_{2}$ and $P_{3}$. We conlude by saying that the height pairing matrix of points $P_{i}$, $i=1,2,3,4$ has nonzero determinant, so the points are linearly independent.
\end{proof}

\begin{proposition}
There exists a triple $(\alpha,\beta,\gamma)\in\mathcal{G}$ such that
\[\textrm{rank}\ E_{(\alpha,\beta,\gamma)}(\overline{\mathbb{Q}}(t))=4.\]
\end{proposition}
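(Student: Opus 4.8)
\section*{Proof proposal for the Proposition}

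The plan is to exhibit a single convenient generic triple and show that for it the rank equals the lower bound supplied by Lemma~\ref{lem:rank_4_generic}. Take, for instance, $(\alpha,\beta,\gamma)=(1,2,4)$: none of the factors $\alpha\beta\gamma$, $\alpha+\beta$, $\alpha-\gamma$, $\beta-\gamma$, $\alpha\beta-\alpha\gamma-\beta\gamma$ vanishes, and $\alpha\neq\beta$, $\beta\neq-\gamma$, so $(1,2,4)\in\mathcal{G}$, and by Lemma~\ref{lem:rank_4_generic} the points $P_1,\dots,P_4$ already span a rank~$4$ subgroup of $E_{(1,2,4)}(\overline{\mathbb{Q}}(t))$. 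It therefore suffices to prove $\rank E_{(1,2,4)}(\overline{\mathbb{Q}}(t))\leq 4$. Let $S$ be the Kodaira--N\'eron model of $E_{(1,2,4)}$; by the previous proposition it is a K3 surface whose only singular fibres are twelve fibres of type $I_2$, so the Shioda--Tate formula reads $\rho(S_{\overline{\mathbb{Q}}})=2+12+\rank E_{(1,2,4)}(\overline{\mathbb{Q}}(t))$ and it is enough to show $\rho(S_{\overline{\mathbb{Q}}})\leq 18$.

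To bound $\rho(S_{\overline{\mathbb{Q}}})$ from above I would reduce modulo a well-chosen prime $p>5$. First one checks, using the explicit discriminant~\eqref{eq:general_discriminant} and $j$-invariant~\eqref{eq:gen_j_invariant} specialised at $(1,2,4)$ together with the criterion of Lemma~\ref{lem:good_reduction}, that $p$ is a prime of good reduction; this is a finite verification that $p$ divides none of the discriminants of the $\rad(\Delta_i)$ and of the numerator and denominator of $j$. Then Corollary~\ref{cor:spec_NS} gives $\rho(S_{\overline{\mathbb{Q}}})\leq\rho(S_{\overline{\mathbb{F}}_p})\leq R_\Phi$, where $R_\Phi$ is read off the characteristic polynomial $P(x)$ of Frobenius on $H^2_{\textrm{\'{e}t}}(S_{\overline{\mathbb{F}}_p},\mathbb{Q}_\ell(1))$, of degree $12\chi(S)-2=22$. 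One determines $P(x)$ by point counting: $\#S(\mathbb{F}_{p^n})$ is computed fibre by fibre over $t\in\mathbb{P}^1(\mathbb{F}_{p^n})$ (an elliptic curve generically, an $I_2$ configuration over the twelve bad values of $t$), and the Grothendieck--Lefschetz trace formula turns these counts for $n=1,2,\dots$ into the power sums of the roots of $P$, hence into $P$ itself. Fourteen of the roots are forced to be $p$ times a root of unity --- the classes of the zero section, of a general fibre, and of the twelve non-identity components of the $I_2$ fibres, on which Frobenius acts by a permutation --- and Lemma~\ref{lem:rank_4_generic} supplies four further such roots through $P_1,\dots,P_4$. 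What one needs is a prime $p$ for which the remaining degree-$4$ factor $Q(x)$ of $P(x)$ has no root of the form $\zeta p$ with $\zeta$ a root of unity, so that $R_\Phi=18$ exactly; the Artin--Tate relation of Corollary~\ref{cor:disc_mod_p} may be used in addition as a consistency check, the predicted value of $-\Delta(\NS(S_{\mathbb{F}_p}))$ for $\rho=18$ being pinned down, up to squares, by the discriminant $32$ of the height matrix of Lemma~\ref{lem:rank_4_generic} together with the fibre contributions.

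Combining the two inequalities gives $\rho(S_{\overline{\mathbb{Q}}})=18$, hence $\rank E_{(1,2,4)}(\overline{\mathbb{Q}}(t))=4$, which proves the proposition. The one genuinely delicate point is the last step of the reduction argument: at a prime where the geometric Picard number of $S_{\overline{\mathbb{F}}_p}$ jumps above $18$ the bound from Corollary~\ref{cor:spec_NS} becomes vacuous, so one must locate a non-jumping prime --- since $\rho(S_{\overline{\mathbb{Q}}})\geq 18$ and $\rho(S_{\overline{\mathbb{F}}_p})$ is even for a non-supersingular K3 surface, the only bad values are $R_\Phi\in\{20,22\}$, which a short search over small primes eliminates --- and then certify that the degree-$4$ factor $Q(x)$ really has no root $\zeta p$, most cleanly by checking that $Q$ is irreducible over $\mathbb{Q}$ and that $p^{-4}Q(px)$ is not a cyclotomic polynomial. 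Everything else (good reduction via Lemma~\ref{lem:good_reduction}, the fibrewise point counts, and the factorisation of $P(x)$) is a finite, mechanical computation.
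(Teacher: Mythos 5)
Your proposal follows essentially the same route as the paper: lower bound $4$ from Lemma~\ref{lem:rank_4_generic}, then an upper bound $\rho(S_{\overline{\mathbb{Q}}})\leq 18$ obtained by reducing modulo a good prime (Lemma~\ref{lem:good_reduction}), computing the Frobenius characteristic polynomial by point counting, checking that $R_\Phi=18$ via Corollary~\ref{cor:spec_NS}, and finishing with Shioda--Tate. The paper simply carries out the computation you defer, taking $(\alpha,\beta,\gamma)=(3,5,1)$ and $p=1009$, where $P(x)=(x-1009)^{18}(x^4+412x^3-801146x^2+419449372x+1009^4)$ with the quartic factor irreducible and non-cyclotomic; your argument is complete once the analogous finite verification is executed for your chosen triple.
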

\begin{proof}
We put $\alpha=3,\beta=5$ and $\gamma=1$. We have to show that $\rho(\mathcal{E}_{(\alpha,\beta,\gamma)})\leq 18$. The elliptic surface $\mathcal{E}_{(3,5,1)}$ associated with $E_{(3,5,1)}$ has good reduction at $p=1009$ by Lemma \ref{lem:good_reduction} and the N\'{e}ron-Severi group is defined over $\mathbb{F}_{p}$. We compute the characteristic polynomial of Frobenius automorphism $\Phi$ for any prime $\ell\neq p$. It follows that
\[P(x)=(x-1009)^{18}(x^4 + 412x^3 - 801146x^2 + 419449372x + 1009^4).\]
The last factor is irreducible over $\mathbb{Z}[t]$ and is not a cyclotomic polynomial, hence $R_{\Phi}=18$. Corollary \ref{cor:spec_NS} implies that $\rho(\mathcal{E}_{(3,5,1)})\leq 18$. Now application of Shioda-Tate formula finishes the proof.
\end{proof}

\begin{lemma}\label{lem:rank_5_P6}
Let $(\alpha,\beta,\gamma)\in\mathcal{U}_{\textrm{gen}}$ be a generic triple such that $\beta=-\gamma$ and $\alpha\neq\beta$. The set $\{P_{1},P_{2},P_{3},P_{4},P_{6}\}$ spans a rank $5$ subgroup of $E_{(\alpha,\beta,\gamma)}(\overline{\mathbb{Q}}(t))$. Its height pairing matrix looks as follows
\begin{equation*}
\left(
\begin{array}{ccccc}
4 & 0 & 0 & 0 & 0\\
0 & 2 & 0 & 0 & 0\\
0 & 0 & 2 & 0 & 0\\
0 & 0 & 0 & 2 & 0\\
0 & 0 & 0 & 0 & 4\\
\end{array}\right)
\end{equation*}
\end{lemma}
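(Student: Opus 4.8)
The plan is to reduce the statement to the computation already performed in Lemma \ref{lem:rank_4_generic}, together with a short separate computation of the new row and column indexed by $P_6$. First I would record that under the hypothesis $\beta=-\gamma$ the identity $A\beta+A\gamma+B\gamma=B\gamma$ turns the defining equation of $P_6$ into
$y^2(P_6)=A\beta(\alpha+\beta)(A+B)\cdot B\gamma=4\beta\gamma(\alpha+\beta)\,(t^5-t)^2$,
so that $P_6$ is a genuine point of $E_{(\alpha,\beta,\gamma)}(\overline{\mathbb{Q}}(t))$ with $x(P_6)=\beta(t^2-1)^2$ and $y(P_6)=2\sqrt{\beta\gamma(\alpha+\beta)}\,(t^5-t)$ (the square root lies in $\overline{\mathbb{Q}}$). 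The block $\left(\langle P_i,P_j\rangle\right)_{1\le i,j\le 4}=\left(\begin{smallmatrix}4&0&0&0\\0&2&0&0\\0&0&2&0\\0&0&0&2\end{smallmatrix}\right)$ is then obtained by rerunning the argument of Lemma \ref{lem:rank_4_generic} verbatim; the only thing that must be rechecked is that the auxiliary polynomials appearing there (those cutting out the loci where the sections $\overline{P_i}$ collide, and those controlling where the correcting terms $c_v(P_i,P_j)$ are nonzero) remain separable and coprime to the discriminant $\Delta_{(\alpha,\beta,\gamma)}$ on the locus $\mathcal{U}_{\textrm{gen}}\cap\{\beta=-\gamma,\ \alpha\neq\beta\}$, which is a direct verification since each of these conditions fails only on a proper closed subset of $\mathbb{P}^2$.

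Next I would compute $\langle P_6,P_6\rangle$ exactly as one computes $\langle P_1,P_1\rangle$. Since $x(P_6)=\beta(t^2-1)^2$ is a polynomial, and after the substitution $t=1/s$, $(x,y)\mapsto(xs^4,ys^6)$ its value at $s=0$ is the finite number $\beta$, the section $\overline{P_6}$ never meets $\overline{O}$, so $\overline{P_6}.\overline{O}=0$. The zeros of $y(P_6)$ in $\mathbb{P}^1$ are exactly $t=0,\pm1,\pm i,\infty$, and under the genericity hypotheses none of these is a fibre of bad reduction: one checks directly that $\Delta_1\Delta_2\Delta_3$ from \eqref{eq:general_discriminant} is nonzero at $t=0,\pm1,\pm i$ when $\beta=-\gamma$ and $(\alpha,\beta,\gamma)\in\mathcal{U}_{\textrm{gen}}$, while the fibre at $t=\infty$ is smooth. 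Hence $\overline{P_6}$ meets the identity component of every reducible fibre, all correcting terms $c_v(P_6)$ vanish, and \eqref{equation:height_formula} gives $\langle P_6,P_6\rangle=2\chi(S)+0-0=4$.

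For the off-diagonal entries $\langle P_6,P_i\rangle$ with $i\in\{1,2,3,4\}$, the same observation shows that $\overline{P_6}$ always passes through the identity component of each reducible fibre, so $c_v(P_6,P_i)=0$ for every $v$; together with $\overline{P_6}.\overline{O}=\overline{P_i}.\overline{O}=0$ this reduces, via \eqref{equation:height_pairing_formula}, to showing $\overline{P_6}.\overline{P_i}=2$, so that $\langle P_6,P_i\rangle=\chi(S)-\overline{P_6}.\overline{P_i}=0$. The locus $x(P_6)=x(P_i)$ is cut out by an explicit polynomial in $t$: for $i=2$ and $i=4$ one gets $(\alpha+\beta)A=(q/\gamma)B$ and $(2\alpha+\beta)A+\alpha B=0$ respectively, each of degree $4$ with non-vanishing leading coefficient on $\mathcal{U}_{\textrm{gen}}$, separable and with roots lying on smooth fibres for generic $(\alpha,\beta,\gamma)$, so the argument is identical to that for $\langle P_2,P_4\rangle$; for $i=1$ and $i=3$ the equation collapses to $\beta A=0$, i.e. $A=(t^2-1)^2=0$, so the solutions $t=\pm1$ each occur with multiplicity $2$, and the argument is identical to that for $\langle P_1,P_3\rangle$ (at $t=\pm1$ both sections pass through the $2$-torsion point with $x=0$ on a smooth fibre, and the involution $\iota$ together with an appropriate choice of square roots forces $\overline{P_6}.\overline{P_i}=2$). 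The resulting $5\times5$ height pairing matrix is diagonal with entries $4,2,2,2,4$ and determinant $256\neq0$; since the height pairing is positive definite on $E_{(\alpha,\beta,\gamma)}(\overline{\mathbb{Q}}(t))/\mathrm{tors}$, the points $P_1,P_2,P_3,P_4,P_6$ are linearly independent and span a subgroup of rank $5$.

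I expect the main obstacle to be the bookkeeping in the cases $i=1$ and $i=3$: because $A$ is not separable one must correctly account for the doubled multiplicity at $t=\pm1$, and, just as in Lemma \ref{lem:rank_4_generic}, choose the square roots so that $\overline{P_6}$ intersects $\overline{P_i}$ rather than distributing its intersection between $\overline{P_i}$ and $\overline{-P_i}$; the relevant non-coincidence of tangent slopes at $(t,x,y)=(\pm1,0,0)$ comes down to $\alpha\beta\gamma\neq\beta\gamma(\alpha+\beta)$, i.e. to $\beta\neq0$, and (for $i=3$) to $\beta\gamma\neq0$, both of which hold. The only other point requiring care, and it is routine, is verifying that the auxiliary polynomials imported from the proof of Lemma \ref{lem:rank_4_generic} remain in general position after the specialisation $\beta=-\gamma$.
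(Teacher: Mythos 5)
Your proposal is correct and follows essentially the same route as the paper: reuse the rank-$4$ computation for $P_1,\dots,P_4$, show $\overline{P_6}.\overline{O}=0$ and that all correcting terms for $P_6$ vanish to get $\langle P_6,P_6\rangle=4$, and reduce each $\langle P_i,P_6\rangle=0$ to $\overline{P_i}.\overline{P_6}=2$ via the same explicit loci ($A=0$ for $i=1,3$, and your quartics for $i=2,4$ agree with the paper's after substituting $\beta=-\gamma$). Only a trivial slip: the determinant of $\mathrm{diag}(4,2,2,2,4)$ is $128$, not $256$, which of course does not affect the nonvanishing and hence the rank-$5$ conclusion.
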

\begin{proof}
By Lemma \ref{lem:rank_4_generic} we already know that the points $P_{1},P_{2},P_{3}$ and $P_{4}$ span a rank $4$ subgroup of the full Mordell-Weil group. We have to compute now the intersections $\langle P_{i},P_{6}\rangle$ for $i=1,2,3,4$ and $\langle P_{6},P_{6}\rangle$.

To show that $\langle P_{6},P_{6}\rangle=4$ we check that $\overline{P_{6}}.\overline{O}=0$ and that $c_{v}(P_{6})=0$ for all $v$. Now observe that $\langle P_{i},P_{6}\rangle = 2-\overline{P_{i}}.\overline{P_{6}}$ for $i=1,2,3,4$. We will prove that $\overline{P_{i}}.\overline{P_{6}}=2$. This is equivalent to prove that the system $x(P_{i})=x(P_{6}),\ y^2(P_{i})=y^2(P_{6})$ has exactly four solutions (some of them possibly multiple). For $i=1,3$ the solutions come from the equation $A(t)=0$. For $i=2$, we obtain the solutions from $(\alpha  A-A \gamma +2 \alpha  B-B \gamma)(t)=0$. For $i=4$ we get $(-2 \alpha  A+A \gamma -\alpha  B)(t)=0$.
\end{proof}

\begin{proposition}\label{prop:rank_5_example}
There exists a triple $(\alpha,\beta,\gamma)\in\mathcal{U}_{\textrm{gen}}$ such that $\beta=-\gamma$ and $\alpha\neq\beta$ and
\[\textrm{rank}\ E_{(\alpha,\beta,\gamma)}(\overline{\mathbb{Q}}(t))=5.\]
\end{proposition}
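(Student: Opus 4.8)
The plan is to follow the template of the rank $4$ example above, with one essential difference forced by the parity of the Picard number. First I would fix a concrete generic triple on the stratum $\beta=-\gamma$, for instance $(\alpha,\beta,\gamma)=(3,-1,1)$, and check directly that $\alpha\beta\gamma=-3\neq0$, that $\alpha+\beta=2$, $\alpha-\gamma=2$, $\beta-\gamma=-2$ and $\alpha\beta-\alpha\gamma-\beta\gamma=-5$ are all nonzero, and that $\alpha=3\neq-1=\beta$; hence $(\alpha,\beta,\gamma)\in\mathcal{U}_{\textrm{gen}}$ with $\beta=-\gamma$ and $\alpha\neq\beta$, so Lemma~\ref{lem:rank_5_P6} applies and $P_{1},P_{2},P_{3},P_{4},P_{6}$ span a rank $5$ subgroup of $E_{(\alpha,\beta,\gamma)}(\overline{\mathbb{Q}}(t))$. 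It remains to prove the reverse inequality $\rank E_{(\alpha,\beta,\gamma)}(\overline{\mathbb{Q}}(t))\leq5$.

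By the Shioda--Tate formula applied to the Kodaira--N\'{e}ron model $\mathcal{E}_{(\alpha,\beta,\gamma)}$, which is a K3 surface with twelve fibres of type $I_{2}$, one has $\rho(\mathcal{E}_{(\alpha,\beta,\gamma)})=14+\rank E_{(\alpha,\beta,\gamma)}(\overline{\mathbb{Q}}(t))$, so the target is equivalent to $\rho(\mathcal{E}_{(\alpha,\beta,\gamma)})\leq19$. The five independent points together with the trivial lattice already give $\rho(\mathcal{E}_{(\alpha,\beta,\gamma)})\geq19$, so the only possibility to exclude is $\rho=20$. Note that the single--prime bound of Corollary~\ref{cor:spec_NS} cannot by itself suffice here: the Frobenius eigenvalues on $H^{2}_{\textrm{\'{e}t}}$ that are not of the form $\zeta q$ are non--real and occur in complex--conjugate pairs, so $R_{\Phi}$ is always even, and since $R_{\Phi}\geq\rho(\mathcal{E}_{(\alpha,\beta,\gamma)})\geq19$ this only yields $\rho\leq R_{\Phi}\geq20$ at every good prime. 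I therefore plan to run the van Luijk reduction argument at \emph{two} primes.

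Concretely: using Lemma~\ref{lem:good_reduction} I would produce two primes $p_{1},p_{2}>5$ of good reduction for $\mathcal{E}_{(\alpha,\beta,\gamma)}$ at which the N\'{e}ron--Severi group is defined over the prime field. For each $p_{i}$ one counts $\#\mathcal{E}_{(\alpha,\beta,\gamma)}(\mathbb{F}_{p_{i}^{k}})$ for $k\leq11$ and, via the Grothendieck--Lefschetz trace formula and the functional equation, reconstructs the characteristic polynomial $P_{i}(x)$ of Frobenius on $H^{2}_{\textrm{\'{e}t}}$; one chooses $p_{i}$ so that $P_{i}(x)=(x-p_{i})^{20}\bigl(x^{2}-c_{i}x+p_{i}^{2}\bigr)$ with $c_{i}\notin\{0,\pm p_{i},\pm2p_{i}\}$, which forces $R_{\Phi}=20$ and hence, since the Tate conjecture is known for K3 surfaces, $\rho(\mathcal{E}_{\overline{\mathbb{F}}_{p_{i}}})=20$. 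Corollary~\ref{cor:disc_mod_p} then computes the class of $\disc\NS(\mathcal{E}_{\overline{\mathbb{F}}_{p_{i}}})$ in $\mathbb{Q}^{\times}/(\mathbb{Q}^{\times})^{2}$. If $\rho(\mathcal{E}_{(\alpha,\beta,\gamma)})$ were $20$, the two specialization maps would be injections of rank $20$ lattices, hence of finite index, so $\disc\NS(\mathcal{E}_{(\alpha,\beta,\gamma)})$ would lie simultaneously in the square class of $\disc\NS(\mathcal{E}_{\overline{\mathbb{F}}_{p_{1}}})$ and in that of $\disc\NS(\mathcal{E}_{\overline{\mathbb{F}}_{p_{2}}})$. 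Choosing $p_{1},p_{2}$ so that these two square classes differ gives a contradiction; therefore $\rho(\mathcal{E}_{(\alpha,\beta,\gamma)})=19$ and the rank equals $5$.

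The main obstacle is purely computational and is concentrated in the second half. One must locate primes at which the $20$--dimensional algebraic part of $H^{2}$ splits off with a non--cyclotomic quadratic cofactor (in particular the surface must not be Artin--supersingular there, which would give $R_{\Phi}=22$) \emph{and} at which the resulting discriminants land in two different classes modulo squares; in practice this forces one to try several primes and to carry out point counts over $\mathbb{F}_{p^{k}}$ up to $k\approx11$ for each candidate. Everything else --- verifying genericity of the chosen triple, the $I_{2}$ configuration and $\chi=2$, good reduction via Lemma~\ref{lem:good_reduction}, and the Shioda--Tate bookkeeping --- is routine.
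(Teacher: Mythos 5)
Your proposal is correct and follows essentially the same route as the paper: the paper also takes $(\alpha,\beta,\gamma)=(3,-1,1)$, gets the lower bound from Lemma \ref{lem:rank_5_P6}, and excludes $\rho=20$ by the two-prime van Luijk discriminant comparison via Corollary \ref{cor:disc_mod_p}, carried out explicitly at $p=241$ and $p=409$, where the N\'eron--Severi discriminants are $-15$ and $-3$ modulo squares. The only difference is that the paper supplies the concrete primes and characteristic polynomials that your plan leaves to the computation.
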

\begin{proof}
We put $\alpha=3,\beta=-1$ and $\gamma=1$. We check that the surface $S=\mathcal{E}_{(3,-1,1)}$ has good reduction at primes $p=241,409$ with the full N\'{e}ron-Severi group of $S_{\overline{\mathbb{F}}_{p}}$ defined over $\mathbb{F}_{p}$. We compute the characteristic polynomials $P_{p}$ of $\Phi_{p}$ for a fixed $\ell\neq p$
\begin{align*}
P_{241}(x)&=(x-241)^{20}(x^2+478x+241^2)\\
P_{409}(x)&=(x-409)^{20}(x^2-626x+409^2)\\
\end{align*}
Now by Corollary \ref{cor:disc_mod_p} we obtain
\begin{align*}
\Delta(\NS(S_{\overline{\mathbb{F}}_{241}}))&\equiv -3\cdot 5\textrm{  }\textrm{mod }(\mathbb{Q}^{\times})^2,\\
\Delta(\NS(S_{\overline{\mathbb{F}}_{409}}))&\equiv -3\textrm{  }\textrm{mod }(\mathbb{Q}^{\times})^2.
\end{align*}
Suppose that the rank $E_{(-3,-1,1)}(\overline{\mathbb{Q}}(t))$ would be equal to $6$. Then by Shioda-Tate formula this will imply that $N=\rho(S_{\overline{\mathbb{Q}}})=20$. The image $\spec_{p}(N)$ would be a finite index subgroup in the codomain. So this will imply that the discriminants $\Delta(\NS(S_{\overline{\mathbb{F}}_{409}}))$ and $\Delta(\NS(S_{\overline{\mathbb{F}}_{241}}))$ should be equal modulo squares. But they are not, hence a contradiction. This implies that $\rho(N)\leq 19$ and by Shioda-Tate formula again we obtain the statement of the proposition.
\end{proof}
\begin{lemma}
Let $(\alpha,\beta,\gamma)\in\mathcal{U}_{\textrm{gen}}$ be a generic triple such that $\alpha=\beta$ and $\beta\neq-\gamma$. The set $\{P_{1},P_{2},P_{3},P_{4},P_{5}\}$ spans a rank $5$ subgroup of $E_{(\alpha,\beta,\gamma)}(\overline{\mathbb{Q}}(t))$. Their height pairing matrix looks as follows
\begin{equation*}
\left(
\begin{array}{ccccc}
4 & 0 & 0 & 0 & 0\\
0 & 2 & 0 & 0 & 0\\
0 & 0 & 2 & 0 & 0\\
0 & 0 & 0 & 2 & 0\\
0 & 0 & 0 & 0 & 4\\
\end{array}\right)
\end{equation*}
\end{lemma}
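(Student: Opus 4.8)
The plan is to mirror the proof of Lemma~\ref{lem:rank_5_P6} essentially verbatim, exploiting the symmetry between the role of $P_6$ (available when $\beta=-\gamma$) and that of $P_5$ (available when $\alpha=\beta$). First I would recall from Lemma~\ref{lem:rank_4_generic} that for a generic triple the points $P_1,P_2,P_3,P_4$ already span a rank $4$ subgroup with the stated $4\times 4$ height pairing matrix, so the only new entries to compute are $\langle P_5,P_5\rangle$ and $\langle P_i,P_5\rangle$ for $i=1,2,3,4$. As before, $\chi(S)=2$ since the Kodaira--N\'{e}ron model $S$ is a K3 surface, and all bad fibres are of type $I_2$, so the height formulas \eqref{equation:height_formula} and \eqref{equation:height_pairing_formula} reduce to counting intersections with $\overline{O}$, counting solutions of the relevant systems $x(P_i)=x(P_j),\ y^2(P_i)=y^2(P_j)$, and tracking the correcting terms $c_v$, which are each either $0$ or $1/2$ depending on whether the two sections pass through the non-identity component of a fibre.

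Next I would show $\langle P_5,P_5\rangle=4$ by verifying that $\overline{P_5}.\overline{O}=0$ (checking at all finite $t$ directly, and at $t=\infty$ via the substitution $t=1/s$, $(x,y)\mapsto(xs^4,ys^6)$) and that $c_v(P_5)=0$ at every place of bad reduction --- i.e. that $\overline{P_5}$ always meets the identity component. Here the hypothesis $\alpha=\beta$ is what makes the $x$-coordinate $-\alpha(A+B)$ of $P_5$ actually give a point on the curve, and one must check that the specialization $y^2(P_5)(t_0)$ does not vanish at the three families of bad places (zeros of $\alpha A-\beta B$, of $\alpha A-\gamma(A+B)$, of $\beta B-\gamma(A+B)$), which follows from genericity together with $\alpha=\beta$, $\beta\neq-\gamma$. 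Then for each $i\in\{1,2,3,4\}$ I would argue $\langle P_i,P_5\rangle=0$ by showing $\overline{P_i}.\overline{P_5}=2$ and $c_v(P_i,P_5)=0$ for all $v$: the intersection number equals the number of solutions of the system (counted with multiplicity, and halved using the $\iota$-symmetry argument exactly as in Lemma~\ref{lem:rank_4_generic}), where for $i=1,3$ the solutions again come from the non-separable factor $A(t)=0$ (so $t=\pm1$ count twice), and for $i=2,4$ from an explicit auxiliary polynomial analogous to those appearing in Lemma~\ref{lem:rank_5_P6}, which one checks has no common zero with the bad-reduction loci under the generic hypotheses.

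Finally, having assembled the full $5\times 5$ height pairing matrix $\mathrm{diag}(4,2,2,2,4)$, I would note that its determinant is $128\neq 0$, so $P_1,\dots,P_5$ are linearly independent in $E_{(\alpha,\beta,\gamma)}(\overline{\mathbb{Q}}(t))/\mathrm{tors}$ and hence span a subgroup of rank $5$. The main obstacle is the same bookkeeping that already appears in Lemma~\ref{lem:rank_4_generic}: one must be careful, when counting solutions of the $y^2$-systems, to verify that the relevant defining polynomials are separable and disjoint from the discriminant locus for triples in $\mathcal{U}_{\textrm{gen}}$ with $\alpha=\beta$, $\beta\neq-\gamma$, since a coincidence there would change a correcting term from $0$ to $1/2$ and alter an off-diagonal entry; but this is a finite explicit check in $\overline{\mathbb{Q}}[\alpha,\gamma][t]$ entirely parallel to the one carried out for $P_6$.
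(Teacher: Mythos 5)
Your proposal is correct and takes essentially the same route as the paper, whose entire proof of this lemma is the single remark that it is "similar to the proof of Lemma \ref{lem:rank_5_P6}"; you have simply carried out that adaptation with $P_{5}$ in place of $P_{6}$. One small detail to adjust: with $\alpha=\beta$ the intersections of $\overline{P_{1}}$ with $\overline{P_{5}}$ come from the non-separable factor $(A+B)(t)=(t^2+1)^2=0$ (and for $i=3$ from a different auxiliary polynomial), not from $A(t)=0$ as in the $P_{6}$ case, but this does not change the count $\overline{P_{i}}.\overline{P_{5}}=2$ or the conclusion.
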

\begin{proof}
The proof is similar to the proof of Lemma \ref{lem:rank_5_P6}.
\end{proof}

\begin{proposition}
There exists a triple $(\alpha,\beta,\gamma)\in\mathcal{U}_{\textrm{gen}}$ such that $\alpha=\beta$ and $\beta\neq-\gamma$ and
\[\textrm{rank}\ E_{(\alpha,\beta,\gamma)}(\overline{\mathbb{Q}}(t))=5.\]
\end{proposition}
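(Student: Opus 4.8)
The plan is to follow the pattern of Proposition~\ref{prop:rank_5_example} almost verbatim. I would fix a concrete generic triple with $\alpha=\beta$ and $\beta\ne-\gamma$, say $(\alpha,\beta,\gamma)=(3,3,1)$; one checks at once that it lies in $\mathcal{U}_{\textrm{gen}}$, since none of $\alpha\beta\gamma=9$, $\alpha+\beta=6$, $\alpha-\gamma=2$, $\beta-\gamma=2$, $\alpha\beta-\alpha\gamma-\beta\gamma=3$ vanishes, and that $\beta=3\ne-1=-\gamma$. Write $S=\mathcal{E}_{(3,3,1)}$ for the Kodaira--N\'eron model of $E_{(3,3,1)}$. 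By the Lemma just proved, the points $P_{1},\dots,P_{5}$ span a rank~$5$ subgroup of $E_{(3,3,1)}(\overline{\mathbb{Q}}(t))$, so it is enough to establish the reverse inequality $\rank E_{(3,3,1)}(\overline{\mathbb{Q}}(t))\le 5$. Since all twelve bad fibres are of type $I_{2}$ and $S$ is a K3 surface, the Shioda--Tate formula gives $\rho(S_{\overline{\mathbb{Q}}})=14+\rank E_{(3,3,1)}(\overline{\mathbb{Q}}(t))$, so the task reduces to ruling out $\rho(S_{\overline{\mathbb{Q}}})=20$.

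To do that I would argue by contradiction as in Proposition~\ref{prop:rank_5_example}. Using Lemma~\ref{lem:good_reduction} I would locate two primes $p_{1}\ne p_{2}$ of good reduction for $S$ at which the whole N\'eron--Severi group of $S_{\overline{\mathbb{F}}_{p}}$ is defined over $\mathbb{F}_{p}$, and for each of them compute the characteristic polynomial $P_{p}(x)$ of the Frobenius $\Phi_{p}$ on $H^{2}_{\textrm{\'{e}t}}(S_{\overline{\mathbb{F}}_{p}},\mathbb{Q}_{\ell}(1))$ by counting points of $S$ over $\mathbb{F}_{p}$ and $\mathbb{F}_{p^{2}}$ and applying the Grothendieck--Lefschetz trace formula (cf.~\cite{Naskrecki_Acta}). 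If at one of these primes $R_{\Phi_{p}}\le 19$, then Corollary~\ref{cor:spec_NS} already yields $\rho(S_{\overline{\mathbb{Q}}})\le 19$ and we are done. In the expected situation $P_{p}(x)=(x-p)^{20}Q_{p}(x)$ with $Q_{p}$ an irreducible non-cyclotomic quadratic, so $R_{\Phi_{p}}=20$ and $\rho(S_{\overline{\mathbb{F}}_{p}})=20$; then Corollary~\ref{cor:disc_mod_p} computes $\Delta(\NS(S_{\overline{\mathbb{F}}_{p}}))$ modulo squares. If $\rho(S_{\overline{\mathbb{Q}}})$ were $20$, each specialization map $\spec_{p_{i}}$ would be of finite index, forcing $\Delta(\NS(S_{\overline{\mathbb{Q}}}))$ to be congruent modulo $(\mathbb{Q}^{\times})^{2}$ both to $\Delta(\NS(S_{\overline{\mathbb{F}}_{p_{1}}}))$ and to $\Delta(\NS(S_{\overline{\mathbb{F}}_{p_{2}}}))$; choosing $p_{1},p_{2}$ for which these two classes in $\mathbb{Q}^{\times}/(\mathbb{Q}^{\times})^{2}$ differ produces the contradiction, hence $\rho(S_{\overline{\mathbb{Q}}})\le 19$ and $\rank E_{(3,3,1)}(\overline{\mathbb{Q}}(t))=5$.

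The verification that $(3,3,1)$ is generic, the good-reduction test from Lemma~\ref{lem:good_reduction}, and the point counts producing $P_{p_{1}}$ and $P_{p_{2}}$ are all routine and machine-assisted. The one step that is not automatic is the \emph{search} for the primes: one needs a pair of good primes at which the quadratic factors $Q_{p}$ are non-cyclotomic, so that $\rho(S_{\overline{\mathbb{F}}_{p}})$ equals $20$ and not $22$, \emph{and} at which the two discriminant classes delivered by Corollary~\ref{cor:disc_mod_p} are distinct. As in Proposition~\ref{prop:rank_5_example} a brief search through the small good primes should turn up such a pair, but nothing guarantees success at any prescribed prime, so this is where the actual effort lies; should the discriminant classes stubbornly coincide for the primes tried, one falls back on exhibiting a single prime with $R_{\Phi_{p}}\le 19$, which again is a matter of searching.
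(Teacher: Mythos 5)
Your proposal is correct and follows essentially the same route as the paper: the paper also takes $(\alpha,\beta,\gamma)=(3,3,1)$, finds good-reduction primes (namely $p=73$ and $p=97$), computes $P_{73}(x)=(x-73)^{20}(x^2+142x+73^2)$ and $P_{97}(x)=(x-97)^{20}(x^2-2x+97^2)$, and uses Corollary~\ref{cor:disc_mod_p} to get discriminant classes $-2$ and $-3$ modulo squares, whose disagreement rules out $\rho(S_{\overline{\mathbb{Q}}})=20$. The only difference is that you leave the prime search open-ended, whereas the paper supplies the explicit pair.
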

\begin{proof}
We put $\alpha=3,\beta=3$ and $\gamma=1$ and consider $S=\mathcal{E}_{(3,3,1)}$. We check that $S$ has good reduction at primes $p=73,97$. Like in the proof of Proposition \ref{prop:rank_5_example} we compute the discriminant of the reduction of N\'{e}ron-Severi groups. The characteristic polynomials of Frobenius are as follows
\begin{align*}
P_{73}(x)&=(x-73)^{20}(x^2+142x+73^2)\\
P_{97}(x)&=(x-97)^{20}(x^2-2x+97^2)
\end{align*}
We obtain $\modulo{\Delta(\NS(S_{\overline{\mathbb{F}}_{73}}))}{-2}{(\mathbb{Q}^{\times})^2}$ and $\modulo{\Delta(\NS(S_{\overline{\mathbb{F}}_{97}}))}{-3}{(\mathbb{Q}^{\times})^2}$. This leads to the conclusion that $\rho(S)\leq 19$ which implies the statement of the proposition.
\end{proof}
\subsection{Type $(\alpha,\beta,\gamma)=(-1,-1,1)$}
Now we consider a very special generic triple $(\alpha,\beta,\gamma)$. It satisfies two extra conditions $\alpha=\beta$ and $\beta=-\gamma$. By the projective equivalence of tuples we can state the results for $\alpha=\beta=-\gamma=-1$.
The Kodaira types of singular fibers that appear in this case are:
\begin{equation}
\begin{array}{rl}
\textrm{point} & \textrm{fiber type}\\
\hline
 t^2 + 2t - 1=0 & I_{2}\\
 t^4+1=0 & I_{2}\\
 t^2 - 2t - 1=0 & I_{2}\\
 t^4 + 6t^2 + 1=0 & I_{2}
\end{array}
\end{equation}
There are six linearly independent points on the curve:
\begin{align*}
R_{1}&=(0,fgh)\\
R_{2}&=(h^2,\sqrt{2}fgh)\\
R_{3}&=(1/2 h^2,\sqrt{-6}(h^2-2g^2)h)\\
R_{4}&=(-f^2,\sqrt{2}fgh)\\
R_{5}&=(-1/2 g^2, \sqrt{6}g(2h^2-g^2))\\
R_{6}&=(-1/2 f^2,\sqrt{6}f(2h^2-f^2))
\end{align*}
With respect to the height pairing $\langle\cdot,\cdot\rangle$ the determinant of Gram matrix equals $384$
\begin{equation}
\left(\begin{array}{cccccc}
 4 &  0 &  0 &  0 &  0 &  0\\                                                                                                                                                                                         
 0 &  4 &  0 & -2 &  0 &  0\\                                                                                                                                                                                         
 0 &  0 &  2 &  0 &  0 &  0\\                                                                                                                                                                                         
 0 & -2 &  0 &  4 &  0 &  0\\
 0 &  0 &  0 &  0 &  2 &  0\\
 0 &  0 &  0 &  0 &  0 &  2
\end{array}\right)
\end{equation}
By the Shioda-Tate formula this proves that we have found sufficiently many independent points to obtain at least a finite index subgroup in the full Mordell-Weil group. In fact we can easily determine the full Mordell-Weil group using a full descent computation like in \cite[Lemma 6.2]{Naskrecki_Acta}. We denote by $T_{1}$ and $T_{2}$ the generators of the torsion subgroup. By the fibers configuration it is easy to prove (cf. \cite[Cor. 7.5]{Shioda_Schutt}) that the full torsion subgroup is isomorphic to $\mathbb{Z}/2\oplus\mathbb{Z}/2$. We put $T_{1}=(4t^2,0)$ and $T_{2}=(-t^4-2t^2-1,0)$. We consider $6$ points $Q_{1},Q_{2},Q_{3},Q_{4},Q_{5},Q_{6}$ with coefficients in the field $\mathbb{Q}(\sqrt{2},\sqrt{3},\sqrt{-3})(t)$ defined as follows
\begin{align}
2Q_{1}&=R_1 - R_2 - R_4 - R_5 + R_6 - T_2,\\
2Q_{2}&=R_1 - R_2 - R_5 + R_6 - T_2,\\
2Q_{3}&=R_1 - R_4 - R_5 + R_6 - T_2,\\
2Q_{4}&=R_1 - R_2 + R_3 - R_4 - R_5 + T_1 - T_2,\\
2Q_{5}&=R_1 - R_2 - R_3 - R_4 - R_5 + T_1 - T_2,\\
2Q_{6}&=R_1 - R_2 + R_3 - R_4 + R_6 + T_1. 
\end{align}
The height pairing matrix for the points $Q_{i}$ has the form
\begin{equation}
\left(\begin{array}{cccccc}
3 & 5/2 & 5/2 & 5/2 & 5/2 & 5/2\\
5/2 & 3 & 3/2 & 2 & 2 & 2\\
5/2 & 3/2 & 3 & 2 & 2 & 2\\
5/2 & 2 & 2 & 3 & 2 & 5/2\\
5/2 & 2 & 2 & 2 & 3 & 3/2\\
5/2 & 2 & 2 & 5/2 & 3/2 & 3\\
\end{array}\right)
\end{equation}
and determinant equal to $3/8$. Now we follow the approach in proof of \cite[Lemma 6.2]{Naskrecki_Acta} and prove that the points $Q_{i}$ and torsion generators $T_{1}$ and $T_{2}$ span the whole Mordell-Weil group over $\overline{\mathbb{Q}}(t)$. We omit tedious computations which can be easily done by a computer package. This allows us to deduce that the discriminant of the N\'{e}ron-Severi group in this case will be $-2^5\cdot 3$. This can be used to deduce the supersingular primes $p$ that allow in positive characteristic to get rank $8$ over $\mathbb{F}_{p^2}$, cf. Section \ref{sec:supersingular}.

The number field $F=\mathbb{Q}(\sqrt{2},\sqrt{3},\sqrt{-3})$ is normal and its Galois group is isomorphic to $(\mathbb{Z}/2)^3$. Generators $\tau_{1},\tau_{2}, \tau_{3}$ are determined easily
\begin{align*}
\tau_{1}(\sqrt{2})=\sqrt{2},\quad\tau_{1}(\sqrt{3})=\sqrt{3},\quad\tau_{1}(\sqrt{-3})=-\sqrt{-3}\\
\tau_{2}(\sqrt{2})=-\sqrt{2},\quad\tau_{2}(\sqrt{3})=-\sqrt{3},\quad\tau_{2}(\sqrt{-3})=\sqrt{-3}\\
\tau_{3}(\sqrt{2})=-\sqrt{2},\quad\tau_{3}(\sqrt{3})=\sqrt{3},\quad\tau_{3}(\sqrt{-3})=-\sqrt{-3}
\end{align*}

Group $\textrm{Gal}(F/\mathbb{Q})$ acts naturally on the group $G=E_{(-1,-1,1)}(\overline{\mathbb{Q}}(t))=E_{(-1,-1,1)}(F(t))$ (cf. \cite[Wn. 2.5.18]{Naskrecki_thesis}) and we obtain a Galois action on the free module $G/G_{\textrm{tors}}$ which determines a representation
\[\rho:\textrm{Gal}(F/\mathbb{Q})\rightarrow\textrm{GL}_{6}(\mathbb{Z}).\]
We represent the matrices in the basis $\{Q_{i}+G_{\textrm{tors}}\}$ of $G/G_{\textrm{tors}}$
\[\rho(\tau_{1})=\left(\begin{array}{cccccc}
1 & 0 & 0 & 0 & 0 & 0\\
0 & 1 & 0 & 0 & 0 & 0\\
0 & 0 & 1 & 0 & 0 & 0\\
0 & 0 & 0 & 0 & 1 & -1\\
0 & 0 & 0 & 1 & 0 & 1\\
0 & 0 & 0 & 0 & 0 & 1
\end{array}\right)
\]

\[\rho(\tau_{2})=\left(\begin{array}{cccccc}
-3 & -2 & -2 & -4 & -4 & -4\\
2 & 1 & 2 & 2 & 2 & 2\\
2 & 2 & 1 & 2 & 2 & 2\\
0 & 0 & 0 & 0 & 1 & -1\\
0 & 0 & 0 & 1 & 0 & 1\\
0 & 0 & 0 & 0 & 0 & 1
\end{array}\right)
\]
\[\rho(\tau_{3})=\left(\begin{array}{cccccc}
-7 & -6 & -6 & -6 & -6 & -6\\
2 & 1 & 2 & 2 & 2 & 2\\
2 & 2 & 1 & 2 & 2 & 2\\
0 & 0 & 0 & 0 & -1 & 1\\
2 & 2 & 2 & 1 & 2 & 1\\
2 & 2 & 2 & 2 & 2 & 1\\
\end{array}\right)
\]

The tuples $v\in\mathbb{Z}^{6}$ such that $\rho(\sigma)v=v$ for all $\sigma\in\textrm{Gal}(F/\mathbb{Q})$ represent the points $P$ in $E_{(-1,-1,1)}(F(t))$ such that $\sigma(P)-P\in E[2](F(t))$ for all $\sigma$. We easily find that the submodule $\{v: \rho(\tau_{1}\tau_{3})v=v\}$ is one-dimensional and is generated by the vector $(3,-1,-1,0,-1,-1)^{T}$. This combination represents a point $Q=3Q_{1}-Q_{2}-Q_{3}-Q_{5}-Q_{6}$ which satisfies $\tau_{i}(Q)=Q+T_{2}$ for $i=1,2,3$. This implies that the free part of $E_{(-1,-1,1)}(\mathbb{Q}(t))$ is spanned by $2Q=P_{1}$, hence
\[E_{(-1,-1,1)}(\mathbb{Q}(t))\cong\mathbb{Z}\oplus\mathbb{Z}/2\oplus\mathbb{Z}/2.\]

\subsection{Triples $(\alpha,\beta,\gamma)\in U$ with $\alpha=\gamma$}
Let us denote the set of triples $(\alpha,\beta,\gamma)\in U$ with $\alpha=\gamma$ and $\alpha\neq -\beta$ and $\beta\neq\gamma$ by $\mathcal{S}_{1}$. In this situation the equation $E_{(\alpha,\beta,\gamma)}$ defines an elliptic curve which is a generic fiber of elliptic surface $\mathcal{E}_{(\alpha,\beta,\gamma)}$ with bad fibers of types $I_{2}$ and $I_{4}$. The discriminant of the Weierstrass equation of $E_{(\alpha,\beta,\gamma)}$ has the form
\[256 \alpha ^2 t^4 \left(\alpha +\alpha  t^4-2 \alpha  t^2-4 \beta  t^2\right)^2 \left(\alpha +\alpha  t^4+2 \alpha  t^2-4 \beta  t^2\right)^2\]
For $t=0$ and $t=\infty$ we have the $I_{4}$ fibers and for $t$ that are roots of 
\[\left(\alpha +\alpha  t^4-2 \alpha  t^2-4 \beta  t^2\right) \left(\alpha +\alpha  t^4+2 \alpha  t^2-4 \beta  t^2\right)=0\]
we have $I_{2}$ fibers. Shioda-Tate formula implies that we have the bound
\[\rank E_{(\alpha,\beta,\gamma)}(\overline{\mathbb{Q}}(t))\leq 4.\]

\begin{lemma}\label{lem:S1_case}
Let $(\alpha,\beta,\gamma)\in\mathcal{S}_{1}$. The set of points $\{P_{1},P_{3},P_{4}\}$ spans a rank $3$ subgroup of $E_{(\alpha,\beta,\gamma)}(\overline{\mathbb{Q}}(t))$. Their height pairing matrix looks as follows
\begin{equation*}
\left(
\begin{array}{ccccc}
4 & 0 & 0\\
0 & 2 & 0\\
0 & 0 & 2
\end{array}\right)
\end{equation*}
\end{lemma}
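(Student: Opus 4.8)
The plan is to mimic the proof of Lemma~\ref{lem:rank_4_generic}, restricted to the three points $P_{1},P_{3},P_{4}$, while carefully re-examining the height computation in the new fibre configuration. The key structural difference from the generic case is that the surface $\mathcal{E}_{(\alpha,\beta,\gamma)}$ now has two $I_{4}$ fibres (at $t=0$ and $t=\infty$) in addition to the $I_{2}$ fibres coming from the roots of $(\alpha+\alpha t^4-2\alpha t^2-4\beta t^2)(\alpha+\alpha t^4+2\alpha t^2-4\beta t^2)$; since $\alpha=\gamma$, the polynomial $\Delta_{2}$ from \eqref{eq:general_discriminant} becomes proportional to $t^2$, which is exactly the source of the enhanced $I_{2}\to I_{4}$ degeneration. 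Because $\chi(S)=2$ still holds (the surface is K3: one checks $\sum_{v} e(F_{v})=24$ using Table~\ref{tab:Euler_num}, with the two $I_{4}$'s contributing $8$), the height formula \eqref{equation:height_formula} has the same leading term $2\chi(S)=4$.

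First I would record, for each $i\in\{1,3,4\}$, the intersection $\overline{P_{i}}.\overline{O}$, checking $t\neq\infty$ directly from the $x$-coordinates ($x(P_{1})=0$, $x(P_{3})=\beta\gamma/(\gamma-\beta)\,A$, $x(P_{4})=-\alpha\beta/(\alpha+\beta)(A+B)$, none of which meets the zero section over the affine line) and handling $t=\infty$ by the substitution $t=1/s$, $(x,y)\mapsto(xs^4,ys^6)$ and inspecting the fibre at $s=0$; I expect $\overline{P_{i}}.\overline{O}=0$ in every case. Next I would determine, at each bad fibre, which component $\overline{P_{i}}$ passes through, i.e. compute the local contribution terms $c_{v}(P_{i})$ and $c_{v}(P_{i},P_{j})$. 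For the $I_{2}$ fibres this is identical to Lemma~\ref{lem:rank_4_generic}: $P_{1}$ always meets the identity component ($c_{v}(P_{1})=0$), while $P_{3},P_{4}$ each meet the non-identity component over the roots of $A$ (for $P_3$) and of $A+B$ (for $P_4$) respectively, contributing $1/2$. The new point is the $I_{4}$ fibres at $t=0,\infty$: here the local contribution of a section meeting the component at ``distance'' $k$ from identity in the cyclic group $\mathbb{Z}/4$ is $k(4-k)/4\in\{0,3/4,1,3/4\}$ (cf.\ \cite[Thm.~8.6]{Shioda_Mordell_Weil}), so I must identify the reduction of each $P_{i}$ at $t=0$ and $t=\infty$. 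For $P_{1}$ one has $x(P_{1})=0$ and $y^2(P_1)=\alpha\beta\gamma AB(A+B)$, which vanishes at $t=0$ (since $B=4t^2$), so $P_1$ does degenerate there and I must pin down which component — I expect $P_1$ to land on a component at distance $2$ (contributing $1$), consistent with the eventual answer $\langle P_1,P_1\rangle=4=2\cdot 2+2\cdot 0-(1+1)$ after summing the two $I_4$ fibres. For $P_3$, $x(P_3)$ is a nonzero multiple of $A=(t^2-1)^2$ which is a unit at $t=0$, so $P_3$ meets the identity component there; similarly at $t=\infty$. Then $\langle P_3,P_3\rangle=4-\sum c_v(P_3)=4-2\cdot(1/2)=3$? — no: here I must be careful, the $I_2$ fibres over the roots of $A$ count \emph{with multiplicity two} because $A$ is a square, so the correcting terms over those two double points sum to $1$, giving $\langle P_3,P_3\rangle=4-1-1=2$; and analogously $\langle P_4,P_4\rangle=2$.

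For the off-diagonal entries I would show $\overline{P_i}.\overline{P_j}=2$ with $c_v(P_i,P_j)=0$ for all $v$, exactly as in Lemma~\ref{lem:rank_4_generic}: for $\langle P_1,P_3\rangle$ and $\langle P_1,P_4\rangle$ one solves the system \eqref{eq:system_eq}, finds the solution locus is the vanishing of an explicit separable (over $\mathcal{S}_1$) polynomial disjoint from the bad locus, uses the involution $\iota:P\mapsto -P$ to split $(\overline{P_1}+\overline{-P_1}).(\overline{P_j}+\overline{-P_j})=4$ into $\overline{P_1}.\overline{P_j}=2$ after a choice of sign, and concludes $\langle P_1,P_j\rangle=0$; for $\langle P_3,P_4\rangle$ one similarly checks the solution locus of $x(P_3)=x(P_4),\,y^2(P_3)=y^2(P_4)$ avoids the bad fibres (using $\alpha=\gamma$ and $\alpha\neq-\beta$, $\beta\neq\gamma$), so $\overline{P_3}.\overline{P_4}=2$, $c_v(P_3,P_4)=0$, hence $\langle P_3,P_4\rangle=0$. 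Finally, the Gram matrix $\mathrm{diag}(4,2,2)$ has determinant $16\neq 0$, so $P_1,P_3,P_4$ are linearly independent and span a rank-$3$ subgroup, as claimed.

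The main obstacle is the bookkeeping at the two $I_4$ fibres: I must correctly identify, via the Kodaira--Néron model and the group law on the smooth locus of the $I_4$ fibre, which of the four components each section $\overline{P_i}$ meets (equivalently, the image of $P_i$ in $\mathbb{Z}/4$ under the specialization), since the generic-fibre formulas of Lemma~\ref{lem:rank_4_generic} give no information about this degeneration. This is a finite, explicit check — substitute $t=0$ and $t=\infty$ into the Weierstrass model, desingularize, and trace each point — but it is the step that genuinely uses the hypothesis $\alpha=\gamma$ and is not a verbatim copy of the earlier argument. Everything else is a routine adaptation.
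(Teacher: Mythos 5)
Your overall strategy --- rerun the computation of Lemma~\ref{lem:rank_4_generic} for $P_{1},P_{3},P_{4}$ and account separately for the two new $I_{4}$ fibres at $t=0,\infty$ --- is exactly the paper's route (its proof consists of precisely this remark), and your final Gram matrix is the right one. However, two of the concrete steps you propose are wrong as written and would derail the height computation. First, the $\tfrac{1}{2}$-contributions of $P_{3}$ and $P_{4}$ do \emph{not} occur over the roots of $A$ and $A+B$: those are smooth fibres, since $\Delta_{1}(\pm 1)=-4\beta\neq 0$, $\Delta_{3}(\pm 1)=4(\gamma-\beta)\neq 0$, and similarly at $t=\pm i$ on $\mathcal{S}_{1}$. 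The correct loci are the four roots of $\Delta_{3}=\gamma(A+B)-\beta B$ for $P_{3}$ and of $\Delta_{1}=\alpha A-\beta B$ for $P_{4}$, where $x(P_{i})$ coincides with the $x$-coordinate of the node of the $I_{2}$ fibre; each of these quartics is separable on $\mathcal{S}_{1}$ (here $\beta\neq\gamma=\alpha$ is used), so the corrections sum to $4\cdot\tfrac{1}{2}=2$ and the height $2$ comes out directly. Your ``roots of $A$ count with multiplicity two'' patch has no role in the diagonal entries; in the paper that multiplicity enters only in the off-diagonal computation $\langle P_{1},P_{3}\rangle$, where $A=0$ is the intersection locus of the two sections (over good fibres).

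Second, at the two $I_{4}$ fibres the Weierstrass fibre is $y^2=x(x+\alpha)^2$, whose only singular point is the node $(-\alpha,0)$, and the reductions of the three sections are $(0,0)$ for $P_{1}$, $\bigl(\beta\gamma/(\gamma-\beta),\,y\neq 0\bigr)$ for $P_{3}$, and $\bigl(-\alpha\beta/(\alpha+\beta),\,y\neq 0\bigr)$ for $P_{4}$; none of these is the node (this amounts to $\alpha\neq 0$ and $q(\alpha,\beta,\gamma)\neq 0$, both automatic on $\mathcal{S}_{1}$). Hence all three sections meet the identity component at $t=0$ and $t=\infty$ and every correction term there vanishes. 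Your expectation that $P_{1}$ lands on a distance-$2$ component, contributing $1$ at each $I_{4}$ fibre, is therefore false, and it is also inconsistent with your own target value: it would give $\langle P_{1},P_{1}\rangle=4+0-2=2$, not $4$ (the identity $4=2\cdot 2+2\cdot 0-(1+1)$ you write does not hold). With these two corrections the rest of your argument (off-diagonal entries via the system \eqref{eq:system_eq} and the involution, nonvanishing of the determinant) goes through and reproduces the paper's conclusion.
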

\begin{proof}
We observe first that the points $P_{1},P_{3}$ and $P_{4}$ are the only well-defined point from the list in \S\ref{subsec:points_general} that are not two-torsion. Essentially we reprove part of Lemma \ref{lem:rank_4_generic} with the extra assumption $\alpha=\gamma$. Observe that the assumption does not change the fiber types of bad reduction crucial for our computation.
\end{proof}

\begin{proposition}
There exists a triple $(\alpha,\beta,\gamma)\in\mathcal{S}_{1}$ and
\[\textrm{rank}\ E_{(\alpha,\beta,\gamma)}(\overline{\mathbb{Q}}(t))=3.\]
\end{proposition}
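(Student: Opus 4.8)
The plan is to exhibit one explicit triple in $\mathcal{S}_{1}$, say $(\alpha,\beta,\gamma)=(1,2,1)$ (any triple in $\mathcal{S}_{1}$ with convenient reduction at a few small primes will do; this one satisfies $\alpha=\gamma\neq 0$, $\alpha\neq-\beta$, $\beta\neq\gamma$, so it lies in $\mathcal{S}_{1}$), and to pin down its Mordell--Weil rank. The lower bound is immediate from Lemma~\ref{lem:S1_case}: $P_{1},P_{3},P_{4}$ lie in $E_{(1,2,1)}(\overline{\mathbb{Q}}(t))$ and span a rank $3$ subgroup. For the upper bound one first reads off the singular fibre configuration of the Kodaira--N\'{e}ron model $S=\mathcal{E}_{(1,2,1)}$: from the discriminant, $t=0$ and $t=\infty$ carry fibres of type $I_{4}$, and the (distinct) roots of $(t^{4}-10t^{2}+1)(t^{4}-6t^{2}+1)$ carry eight fibres of type $I_{2}$. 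Hence $12\chi(S)=2\cdot 4+8\cdot 2=24$, so $\chi(S)=2$, $S$ is a K3 surface, and the Shioda--Tate formula gives $\rho(S_{\overline{\mathbb{Q}}})=2+(2\cdot 3+8\cdot 1)+\rank E_{(1,2,1)}(\overline{\mathbb{Q}}(t))=16+\rank E_{(1,2,1)}(\overline{\mathbb{Q}}(t))$. As $\rho(S_{\overline{\mathbb{Q}}})\leq 20$ for a K3 surface, the rank is at most $4$, and it remains to exclude $\rho(S_{\overline{\mathbb{Q}}})=20$.

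This exclusion cannot be obtained from a single prime: if the rank is $3$ then $\rho(S_{\overline{\mathbb{Q}}})=19$, so $\rho(S_{\overline{\mathbb{F}}_{q}})\geq 19$ at every prime of good reduction, and since any real Frobenius eigenvalue on $H^{2}_{\textrm{\'{e}t}}$ equals $\pm q=\zeta q$, the non-cyclotomic part of the characteristic polynomial of $\Phi$ has even degree; hence $R_{\Phi}$ is always even and Corollary~\ref{cor:spec_NS} gives at best $\rho(S_{\overline{\mathbb{F}}_{q}})\leq 20$. Instead I would use the discriminant method of Proposition~\ref{prop:rank_5_example}. Using Lemma~\ref{lem:good_reduction} one picks two primes $p_{1},p_{2}>5$ of good reduction for $S$ at which, in addition, Frobenius fixes all four components of each $I_{4}$ fibre (a congruence condition on $p_{i}$ visible on the reduced Weierstrass model). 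For each such $p_{i}$ one counts $\#S(\mathbb{F}_{p_{i}^{n}})$ for a few $n$, applies the Grothendieck--Lefschetz trace formula as in \cite{Naskrecki_Acta}, and obtains the characteristic polynomial $P_{i}(x)$ of $\Phi$ on $H^{2}_{\textrm{\'{e}t}}(S_{\overline{\mathbb{F}}_{p_{i}}},\mathbb{Q}_{\ell})$, which should have the shape $P_{i}(x)=(x-p_{i})^{20}g_{i}(x)$ with $g_{i}$ an irreducible quadratic satisfying $g_{i}(p_{i})\neq 0$ and not becoming cyclotomic under $x\mapsto p_{i}x$. By the Tate conjecture for K3 surfaces this forces $\rho(S_{\mathbb{F}_{p_{i}}})=\rho(S_{\overline{\mathbb{F}}_{p_{i}}})=20$, so $\NS(S_{\overline{\mathbb{F}}_{p_{i}}})$ is defined over $\mathbb{F}_{p_{i}}$ and Corollary~\ref{cor:disc_mod_p} outputs the class of $\Delta(\NS(S_{\overline{\mathbb{F}}_{p_{i}}}))$ modulo $(\mathbb{Q}^{\times})^{2}$.

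One then argues by contradiction. If $\rho(S_{\overline{\mathbb{Q}}})=20$, then for $i=1,2$ the specialization map $\spec_{p_{i}}\colon\NS(S_{\overline{\mathbb{Q}}})\to\NS(S_{\overline{\mathbb{F}}_{p_{i}}})$ is an injection of lattices of equal rank $20$, hence of finite index, so $\Delta(\NS(S_{\overline{\mathbb{Q}}}))\equiv\Delta(\NS(S_{\overline{\mathbb{F}}_{p_{i}}}))$ modulo squares; choosing $p_{1},p_{2}$ so that the two computed classes are inequivalent in $\mathbb{Q}^{\times}/(\mathbb{Q}^{\times})^{2}$ gives a contradiction. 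Therefore $\rho(S_{\overline{\mathbb{Q}}})\leq 19$, so $\rank E_{(1,2,1)}(\overline{\mathbb{Q}}(t))\leq 3$, and with Lemma~\ref{lem:S1_case} the rank equals $3$.

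The main obstacle is computational, in two parts. First, one must locate the pair $p_{1},p_{2}$: the class of $\Delta(\NS(S_{\overline{\mathbb{Q}}}))$ is not known in advance, so one samples primes of good reduction with all $I_{4}$-components rational and records the resulting discriminant classes until two inequivalent values occur — a search guaranteed to succeed precisely because the true Picard number is $19$ (were it $20$, all classes would agree). Second, the point counts $\#S(\mathbb{F}_{p_{i}^{n}})$ are the heaviest step and demand care with the loci of bad reduction and with the $I_{4}$-fibre contributions; these are of exactly the same nature as the counts already carried out in \cite{Naskrecki_Acta} and in the proof of Proposition~\ref{prop:rank_5_example}, and are done by computer.
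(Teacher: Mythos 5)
Your strategy is exactly the paper's: lower bound from Lemma \ref{lem:S1_case}, upper bound by reducing at two good primes, computing the Frobenius characteristic polynomial, and playing the two discriminant classes of $\NS(S_{\overline{\mathbb{F}}_{p_i}})$ against each other via Corollary \ref{cor:disc_mod_p}, just as in Proposition \ref{prop:rank_5_example}. Your setup is also correct in the details: for $\alpha=\gamma$ the configuration is two $I_4$ fibres (at $t=0,\infty$) and eight $I_2$ fibres, so Shioda--Tate gives $\rho=16+\operatorname{rank}$ and the bound $\operatorname{rank}\le 4$, and your observation that $R_\Phi$ is always even, so a single prime can never push $\rho(S_{\overline{\mathbb{Q}}})$ below $20$, is the correct reason the discriminant comparison is needed. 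The paper does precisely this, but for the triple $(1,4,1)$, with the primes $61$ and $181$, the explicit polynomials $(x-61)^{20}(x^2+118x+61^2)$ and $(x-181)^{20}(x^2+166x+181^2)$, and the resulting classes $-3\cdot 5$ and $-3\cdot 11$ modulo squares.

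The genuine gap is that you never carry out the step that actually proves the proposition. For your chosen triple $(1,2,1)$ you exhibit no primes, no characteristic polynomials, and no discriminant classes; until that data is produced, nothing rules out that this particular member of $\mathcal{S}_1$ has rank $4$ (Lemma \ref{lem:S1_case} only gives a rank-$3$ subgroup, and special triples inside $\mathcal{S}_1$ could a priori acquire an extra independent point), in which case your search for two primes with inequivalent discriminants would simply never terminate. Moreover, your justification that the search is ``guaranteed to succeed precisely because the true Picard number is $19$'' is circular as part of a proof --- it assumes the conclusion --- and even as a general principle the unconditional existence of two such primes for a K3 with $\rho_{\overline{\mathbb{Q}}}=19$ is a nontrivial fact you neither prove nor cite. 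So what you have is a correct plan, identical in method to the paper's, but the existence claim is only established once the computation is done and recorded for an explicit triple and explicit primes, which is exactly the content the paper's proof supplies.
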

\begin{proof}
We put $\alpha=1$, $\beta=4$ and $\gamma=1$ and we consider $S=\mathcal{E}_{(1,4,1)}$. The surface $S$ has good reduction at primes $p=61$ and $p=181$ with its N\'{e}ron-Severi group defined already over $\mathbb{F}_{p}$. The characteristic polynomials of Frobenius are as follows
\begin{align*}
P_{61}(x)&=(x-61)^{20}(x^2+118x+61^2)\\
P_{181}(x)&=(x-181)^{20}(x^2+166x+181^2)\\
\end{align*}
We obtain $\modulo{\Delta(\NS(S_{\overline{\mathbb{F}}_{61}}))}{-3\cdot 5}{(\mathbb{Q}^{\times})^2}$ and $\modulo{\Delta(\NS(S_{\overline{\mathbb{F}}_{97}}))}{-3\cdot 11}{(\mathbb{Q}^{\times})^2}$. This leads to the conclusion that $\rho(S)\leq 19$ which implies the statement of the proposition.
\end{proof}

\subsection{Triples $(\alpha,\beta,\gamma)\in U$ with $\alpha=-\beta$}
Let us denote the set of triples $(\alpha,\beta,\gamma)\in U$ with $\alpha=-\beta$ and $\alpha\neq \gamma$ and $\beta\neq\gamma$ by $\mathcal{S}_{2}$. In this situation the equation $E_{(\alpha,\beta,\gamma)}$ defines an elliptic curve which is a generic fiber of elliptic surface $\mathcal{E}_{(\alpha,\beta,\gamma)}$ with bad fibers of types $I_{2}$ and $I_{4}$. The discriminant of the Weierstrass equation of $E$ has the form
\[16 \alpha ^2 \left(t^2+1\right)^4 \left(\alpha -\gamma +\alpha  t^4-\gamma  t^4-2 \alpha  t^2-2 \gamma  t^2\right)^2 \left(\gamma +\gamma  t^4+4 \alpha  t^2+2 \gamma  t^2\right)^2\]
For each root of $t^2+1$ we have an $I_{4}$ fiber and for $t$ that are roots of the remaining factors of the discriminant
we have $I_{2}$ fibers. Shioda-Tate formula implies that we have the bound
\[\rank E_{(\alpha,\beta,\gamma)}(\overline{\mathbb{Q}}(t))\leq 4.\]

\begin{lemma}
Let $(\alpha,\beta,\gamma)\in\mathcal{S}_{2}$. The set of points $\{P_{1},P_{2},P_{3}\}$ spans a rank $3$ subgroup of $E_{(\alpha,\beta,\gamma)}(\overline{\mathbb{Q}}(t))$. Their height pairing matrix looks as follows
\begin{equation*}
\left(
\begin{array}{ccccc}
4 & 0 & 0\\
0 & 2 & 0\\
0 & 0 & 2
\end{array}\right)
\end{equation*}
\end{lemma}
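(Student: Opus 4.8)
The plan is to reduce everything to the $\{P_{1},P_{2},P_{3}\}$-part of Lemma~\ref{lem:rank_4_generic}, after checking that the relation $\alpha=-\beta$ only perturbs the fibre configuration away from where these three sections live. First I would record that $P_{1},P_{2},P_{3}$ are exactly the non two-torsion points of the list in \S\ref{subsec:points_general} that are defined over $\overline{\mathbb{Q}}(t)$ here: the $x$-coordinate of $P_{4}$ acquires the zero denominator $\alpha+\beta$, while $P_{5}$ would force $\alpha=\beta$ (hence $\alpha=\beta=0$, excluded from $U$) and $P_{6}$ would force $\beta=-\gamma$, excluded from $\mathcal{S}_{2}$. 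Next I would pin down the Kodaira--N\'{e}ron model $S$: substituting $\beta=-\alpha$ into \eqref{eq:general_discriminant} collapses $\Delta_{1}$ to $\alpha(t^{2}+1)^{2}$, while $\Delta_{2},\Delta_{3}$ stay of degree $4$, and the defining conditions $\gamma\neq\pm\alpha$ of $\mathcal{S}_{2}$ force $\Delta_{2},\Delta_{3}$ to be separable, pairwise coprime and coprime to $t^{2}+1$ (their discriminants as polynomials in $t^{2}$ are $16\alpha\gamma$ and $16\alpha(\alpha+\gamma)$). Hence $S$ has two $I_{4}$ fibres over the roots of $t^{2}+1$ and eight $I_{2}$ fibres over the roots of $\Delta_{2}\Delta_{3}$; $12\chi(S)=2\cdot4+8\cdot2=24$ so $S$ is a K3 surface, and Shioda--Tate bounds $\rank E_{(\alpha,\beta,\gamma)}(\overline{\mathbb{Q}}(t))$ by $4$.

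For the diagonal of the height matrix I would introduce the two-torsion $x$-coordinates $e_{1}=-\alpha f^{2}$, $e_{2}=\alpha g^{2}$, $e_{3}=-\gamma h^{2}$ (with $f=t^{2}-1$, $g=2t$, $h=t^{2}+1$) and note $e_{1}-e_{2}=-\alpha h^{2}$, $e_{2}-e_{3}=\Delta_{3}$, $q(\alpha,-\alpha,\gamma)=-\alpha^{2}$, so that $y^{2}(P_{2})$ and $y^{2}(P_{3})$ are nonzero constant multiples of $g^{2}\Delta_{3}^{2}$ and $f^{2}\Delta_{3}^{2}$, while $y^{2}(P_{1})$ is a constant multiple of $f^{2}g^{2}h^{2}$. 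As in Lemma~\ref{lem:rank_4_generic} one gets $\overline{P_{i}}.\overline{O}=0$ for $i=1,2,3$ by inspecting the finite fibres and then, after $t=1/s$, $(x,y)\mapsto(xs^{4},ys^{6})$, the fibre at $s=0$. A correction term $c_{v}(P_{i})$ is nonzero only when $\overline{P_{i}}$ reduces to the node of the Weierstrass fibre over $v$, i.e.\ $v$ is a root of the colliding discriminant factor and $x(P_{i})(v)$ equals the corresponding double root of the cubic. Here $x(P_{1})=0$ never meets a node (at each bad place the node value is a nonzero multiple of one of $f^{2},g^{2},h^{2}$); at the $I_{4}$ places $t^{2}=-1$ one has $f^{2}=4$, $g^{2}=-4$, $h^{2}=0$, the node sits at $x=-4\alpha$, and neither $x(P_{2})=4\alpha(\alpha-\gamma)/\gamma$ nor $x(P_{3})=-4\alpha\gamma/(\alpha+\gamma)$ equals $-4\alpha$ (using $\alpha\neq0$), so $P_{2},P_{3}$ stay on the identity component of the $I_{4}$ fibres; finally, at each of the four roots of $\Delta_{3}$ the relation $\gamma f^{2}=-(\alpha+\gamma)g^{2}$ forces $x(P_{2})=x(P_{3})=\alpha g^{2}=e_{2}=e_{3}$, which is exactly the node, so each of $P_{2},P_{3}$ meets the non-identity component of those four $I_{2}$ fibres and of no other bad fibre. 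By \eqref{equation:height_formula} this gives $\langle P_{1},P_{1}\rangle=2\chi(S)=4$ and $\langle P_{2},P_{2}\rangle=\langle P_{3},P_{3}\rangle=4-4\cdot\tfrac12=2$.

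For the off-diagonal entries I would copy the arguments of Lemma~\ref{lem:rank_4_generic}. Since $P_{1}$ sits on the identity component at every fibre, $c_{v}(P_{1},P_{i})=0$ everywhere and, by \eqref{equation:height_pairing_formula}, $\langle P_{1},P_{i}\rangle=\chi(S)-\overline{P_{1}}.\overline{P_{i}}$; counting the solutions of $x(P_{1})=x(P_{i})$, $y^{2}(P_{1})=y^{2}(P_{i})$ (which computes $(\overline{P_{1}}+\overline{-P_{1}}).(\overline{P_{i}}+\overline{-P_{i}})$), using the involution $\iota\colon P\mapsto-P$ and checking that the defining polynomial is separable and avoids the singular fibres, one obtains $\overline{P_{1}}.\overline{P_{i}}=2$ and hence $\langle P_{1},P_{i}\rangle=0$ for $i=2,3$ — for $P_{3}$ one must recall that $f^{2}$ is not separable, so its roots $t=\pm1$ are counted with multiplicity two. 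For $\langle P_{2},P_{3}\rangle$ the two sections reduce to a common point only at the four $\Delta_{3}$-fibres, where both meet the same non-identity component of an $I_{2}$ fibre, so $\sum_{v}c_{v}(P_{2},P_{3})=4\cdot\tfrac12=2$; on the other hand $x(P_{2})-x(P_{3})$ is a nonzero constant multiple of $\Delta_{3}$, so in the addition formula the quotient $(y(P_{2})\pm y(P_{3}))/(x(P_{2})-x(P_{3}))$ is regular there and $x(P_{2}-P_{3})$ is a polynomial in $t$ with nonzero constant term, whence $\overline{P_{2}-P_{3}}.\overline{O}=0$ and thus $\overline{P_{2}}.\overline{P_{3}}=0$. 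Therefore $\langle P_{2},P_{3}\rangle=\chi(S)-0-2=0$, the Gram matrix is the diagonal matrix with entries $4,2,2$, and since its determinant $16$ is nonzero the points $P_{1},P_{2},P_{3}$ are independent and span a rank $3$ subgroup.

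The step I expect to be the main obstacle is verifying that the two $I_{4}$ fibres over $t^{2}+1=0$ are irrelevant: one must confirm that none of $P_{1},P_{2},P_{3}$ meets a non-identity component there and — equally — that the specialization $\alpha=-\beta$ has not merged, or made non-separable, any of the $I_{2}$ fibres over $\Delta_{2}\Delta_{3}=0$ through which $P_{2},P_{3}$ run (this is where the hypotheses $\gamma\neq\pm\alpha$ are used). Once this local bookkeeping is in place the computation collapses to the $\{P_{1},P_{2},P_{3}\}$-part of Lemma~\ref{lem:rank_4_generic}, exactly as in the proof of Lemma~\ref{lem:S1_case}; the only genuinely new point of care is the correction term $c_{v}(P_{2},P_{3})$ at the $\Delta_{3}$-fibres, where $\overline{P_{2}}$ and $\overline{P_{3}}$ traverse the same component of an $I_{2}$ fibre.
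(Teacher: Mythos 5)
Your proposal is correct and follows essentially the same route as the paper, whose proof is just the one-line instruction to mimic Lemma~\ref{lem:S1_case}, i.e.\ to rerun the height computations of Lemma~\ref{lem:rank_4_generic} for the surviving points $P_{1},P_{2},P_{3}$ after noting that the specialization $\beta=-\alpha$ only collapses $\Delta_{1}$ into the two $I_{4}$ fibres over $t^{2}+1=0$, away from the fibres relevant to these sections; your write-up simply supplies the explicit bookkeeping (node locations, the identity $x(P_{2})-x(P_{3})=c\,\Delta_{3}$, the correction terms $c_{v}=1/2$ at the four $\Delta_{3}$-fibres) that the paper leaves implicit. The only cosmetic caveat, inherited from the paper's own generic argument, is that for special $(\alpha,\gamma)$ (e.g.\ $\alpha^{2}-\alpha\gamma+\gamma^{2}=0$) the solutions of $x(P_{1})=x(P_{2})$ can land on bad fibres, but since $P_{1}$ never meets a node and always sits on the identity component this does not affect the computed pairings.
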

\begin{proof}
We mimic the proof of Lemma \ref{lem:S1_case}.
\end{proof}

\begin{proposition}
There exists a triple $(\alpha,\beta,\gamma)\in\mathcal{S}_{2}$ and
\[\textrm{rank}\ E_{(\alpha,\beta,\gamma)}(\overline{\mathbb{Q}}(t))=3.\]
\end{proposition}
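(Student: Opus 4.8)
The plan is to mimic verbatim the argument given above for the set $\mathcal{S}_{1}$, since the numerology is identical: here too the associated surface is a K3 with $\rho=16+\rank$, so the rank equals $3$ precisely when $\rho=19$, while the preceding lemma already forces $\rho\ge 19$. Concretely, I would fix a representative triple, for instance $\alpha=2$, $\beta=-2$, $\gamma=1$ (any triple in $\mathcal{S}_{2}$ works), and set $S=\mathcal{E}_{(2,-2,1)}$. From the shape of the discriminant recalled above, the finite bad fibres of $S$ are two fibres of type $I_{4}$ over the roots of $t^{2}+1$ together with eight fibres of type $I_{2}$ over the simple roots of the two remaining quartic factors, and $t=\infty$ is a smooth fibre; using Table \ref{tab:Euler_num} one gets $12\chi(S)=2\cdot 4+8\cdot 2=24$, so $\chi(S)=2$, $S$ is a K3 surface, and the Shioda--Tate formula reads $\rho(S)=2+\bigl(2(4-1)+8(2-1)\bigr)+\rank E_{(2,-2,1)}(\overline{\mathbb{Q}}(t))=16+\rank E_{(2,-2,1)}(\overline{\mathbb{Q}}(t))$.

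By the preceding lemma $\rank E_{(2,-2,1)}(\overline{\mathbb{Q}}(t))\ge 3$, hence $\rho(S_{\overline{\mathbb{Q}}})\ge 19$; suppose towards a contradiction that the rank equals $4$, i.e. $\rho(S_{\overline{\mathbb{Q}}})=20$. I would then pick two primes $p_{1},p_{2}>5$ at which Lemma \ref{lem:good_reduction} guarantees that $S$ has good reduction and at which the full N\'eron--Severi group of $S_{\overline{\mathbb{F}}_{p_{i}}}$ is already defined over $\mathbb{F}_{p_{i}}$ --- this is a mild congruence condition ensuring that the components of the bad fibres and the sections $P_{1},P_{2},P_{3}$ are rational. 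Counting the points of $S$ over $\mathbb{F}_{p_{i}}$ and $\mathbb{F}_{p_{i}^{2}}$ and applying the Grothendieck--Lefschetz trace formula, as in \cite{Naskrecki_Acta}, produces the characteristic polynomial $P_{p_{i}}(x)$ of the Frobenius $\Phi_{p_{i}}$ on $H^{2}_{\textrm{\'{e}t}}(S_{\overline{\mathbb{F}}_{p_{i}}},\mathbb{Q}_{\ell}(1))$; I expect it to factor as $(x-p_{i})^{20}q_{i}(x)$ with $q_{i}$ an irreducible quadratic that is not cyclotomic, so that $R_{\Phi_{p_{i}}}=20$, and then the chain $20=\rho(S_{\overline{\mathbb{Q}}})\le\rho(S_{\overline{\mathbb{F}}_{p_{i}}})\le R_{\Phi_{p_{i}}}=20$ from Corollary \ref{cor:spec_NS} gives $\rho(S_{\mathbb{F}_{p_{i}}})=20$ for $i=1,2$.

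Now Corollary \ref{cor:disc_mod_p} computes from $q_{i}$ the class of $\Delta(\NS(S_{\overline{\mathbb{F}}_{p_{i}}}))$ modulo $(\mathbb{Q}^{\times})^{2}$. Since $\rho(S_{\overline{\mathbb{Q}}})=\rho(S_{\overline{\mathbb{F}}_{p_{i}}})=20$, each specialization map $\spec_{p_{i}}\colon\NS(S_{\overline{\mathbb{Q}}})\rightarrow\NS(S_{\overline{\mathbb{F}}_{p_{i}}})$ is of finite index, so $\Delta(\NS(S_{\overline{\mathbb{F}}_{p_{1}}}))$ and $\Delta(\NS(S_{\overline{\mathbb{F}}_{p_{2}}}))$ must agree modulo squares. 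The strategy is to choose $p_{1},p_{2}$ so that the two quadratics $q_{1},q_{2}$ yield different square classes; that contradiction forces $\rho(S_{\overline{\mathbb{Q}}})=19$, and Shioda--Tate then gives $\rank E_{(2,-2,1)}(\overline{\mathbb{Q}}(t))=3$.

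The only genuine difficulty is computational, and it is exactly the one already dealt with in the $\mathcal{S}_{1}$ case: checking the hypotheses of Lemma \ref{lem:good_reduction} at the chosen primes (that $p_{i}$ divides none of the discriminants of the radicals of the Weierstrass coefficients, of $j_{\textrm{num}}$, and of $\Delta_{(2,-2,1)}$) and carrying out the two point counts accurately enough to pin down $P_{p_{i}}(x)$. The step that cannot be predicted beforehand is whether a given pair of primes actually produces two distinct square classes for the N\'eron--Severi discriminant; this must be verified after the counts, and should the first attempt fail one simply repeats with another pair of primes.
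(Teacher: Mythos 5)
Your strategy is exactly the paper's: fix one triple in $\mathcal{S}_{2}$, bound $\rho(S_{\overline{\mathbb{Q}}})$ by $19$ via good reduction at two primes, the Frobenius characteristic polynomials, and the incompatibility of the two N\'eron--Severi discriminants modulo squares (Corollaries \ref{cor:spec_NS} and \ref{cor:disc_mod_p}), then conclude with Shioda--Tate and the preceding lemma. The paper simply carries this computation out concretely, taking $(\alpha,\beta,\gamma)=(5,-5,1)$ with $p=29$ and $p=101$, obtaining $(x-29)^{20}(x^2+54x+29^2)$ and $(x-101)^{20}(x^2-122x+101^2)$ and hence discriminants $\equiv -7$ and $\equiv -5$ modulo $(\mathbb{Q}^{\times})^{2}$. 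One caution about your phrasing ``any triple in $\mathcal{S}_{2}$ works'': this is not known a priori --- a particular triple could conceivably have geometric rank $4$, in which case no pair of primes would ever yield differing square classes --- so the existence statement genuinely rests on exhibiting one triple and one pair of primes for which the verification succeeds, which is exactly the content of the paper's computation.
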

\begin{proof}
We put $\alpha=5$, $\beta=-5$ and $\gamma=1$ and we consider $S=\mathcal{E}_{(5,-5,1)}$. The surface $S$ has good reduction at primes $p=29$ and $p=101$ with its N\'{e}ron-Severi group defined already over $\mathbb{F}_{p}$. The characteristic polynomials of Frobenius are as follows
\begin{align*}
P_{29}(x)&=(x-29)^{20}(x^2+54x+29^2)\\
P_{101}(x)&=(x-101)^{20}(x^2-122x+101^2)\\
\end{align*}
We obtain $\modulo{\Delta(\NS(S_{\overline{\mathbb{F}}_{29}}))}{-7}{(\mathbb{Q}^{\times})^2}$ and $\modulo{\Delta(\NS(S_{\overline{\mathbb{F}}_{97}}))}{-5}{(\mathbb{Q}^{\times})^2}$. This leads to the conclusion that $\rho(S)\leq 19$ which implies the statement of the proposition.
\end{proof}

\subsection{Case $\beta=\gamma$}
In this section let us denote the set of triples $(\alpha,\beta,\gamma)\in U$ with $\beta=\gamma$ and $\alpha\neq \gamma$ and $\alpha\neq-\beta$ by $\mathcal{S}_{3}$. In this situation the equation $E_{(\alpha,\beta,\gamma)}$ defines an elliptic curve which is a generic fiber of elliptic surface $\mathcal{E}_{(\alpha,\beta,\gamma)}$ with bad fibers of types $I_{2}$ and $I_{4}$. The discriminant of the Weierstrass equation of $E$ has the form
\[16 \beta ^2 (t-1)^4 (t+1)^4 \left(\alpha +\alpha  t^4-2 \alpha  t^2-4 \beta  t^2\right)^2 \left(\alpha -\beta +\alpha  t^4-\beta  t^4-2 \alpha  t^2-2 \beta  t^2\right)^2\]
For $t=1$ and $t=-1$ we have an $I_{4}$ fiber and for $t$ that are roots of the remaining factors of the discriminant
we have $I_{2}$ fibers. Shioda-Tate formula implies that we have the bound
\[\rank E_{(\alpha,\beta,\gamma)}(\overline{\mathbb{Q}}(t))\leq 4.\]

\begin{lemma}
Let $(\alpha,\beta,\gamma)\in\mathcal{S}_{3}$. The set of points $\{P_{1},P_{2},P_{4}\}$ spans a rank $3$ subgroup of $E_{(\alpha,\beta,\gamma)}(\overline{\mathbb{Q}}(t))$. Their height pairing matrix looks as follows
\begin{equation*}
\left(
\begin{array}{ccccc}
4 & 0 & 0\\
0 & 2 & 0\\
0 & 0 & 2
\end{array}\right)
\end{equation*}
\end{lemma}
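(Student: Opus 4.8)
The plan is to follow the template of Lemma~\ref{lem:rank_4_generic}, exactly as in Lemma~\ref{lem:S1_case} and in the analogous lemma for $\mathcal{S}_2$, reproving only the portion that concerns the points which survive the degeneration $\beta=\gamma$. First I would sort out which of $P_1,\dots,P_6$ from \S\ref{subsec:points_general} stay well defined and non-torsion once $\beta=\gamma$: the $x$-coordinate $x(P_3)=\tfrac{\beta\gamma}{\gamma-\beta}A$ acquires a pole, so $P_3$ disappears, while $P_5$ needs $\alpha=\beta$ and $P_6$ needs $\beta=-\gamma$, both excluded on $\mathcal{S}_3$; on the other hand $q(\alpha,\beta,\gamma)=\alpha\beta-\alpha\gamma-\beta\gamma$ specialises to $-\gamma^2\neq0$, so the $y$-coordinates of $P_2$ and $P_4$ do not vanish identically and $P_1,P_2,P_4$ really are of infinite order. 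This singles out $\{P_1,P_2,P_4\}$.

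Next I would compute the Gram matrix from \eqref{equation:height_pairing_formula}--\eqref{equation:height_formula}. From the fibre configuration ($I_4$ over $t=\pm1$, $I_2$ over the roots of $\Delta_1$ and of $\Delta_2$) and Oguiso's theorem one gets $\chi(S)=2$, so $S$ is a K3 surface. One then checks $\overline{P_i}.\overline{O}=0$ for $i=1,2,4$, directly for finite $t$ and, for $t=\infty$, after $t=1/s$, $(x,y)\mapsto(xs^4,ys^6)$. For the off-diagonal entries one counts, with multiplicity, the solutions of the system $x(P_i)=x(P_j)$, $y^2(P_i)=y^2(P_j)$ as in \eqref{eq:system_eq}, uses the involution $\iota=[-1]$ to pass from $(\overline{P_i}+\overline{-P_i}).(\overline{P_j}+\overline{-P_j})$ to $\overline{P_i}.\overline{P_j}$, and verifies that the relevant loci — explicit quartics in $t$, with $A=(t^2-1)^2$ forcing the solutions $t=\pm1$ to count twice in the pair $(P_1,P_4)$ — miss the singular fibres, so that $\overline{P_i}.\overline{P_j}=2$, all $c_v(P_i,P_j)=0$, and hence $\langle P_i,P_j\rangle=\chi(S)-2=0$. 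The diagonal values $\langle P_1,P_1\rangle=4$ and $\langle P_4,P_4\rangle=2$ come out as in Lemma~\ref{lem:rank_4_generic}, since neither $P_1$ nor $P_4$ meets the singular point of the Weierstrass fibre over $t=\pm1$ (so the $I_4$ fibres contribute nothing to $c_v(P_1)$, $c_v(P_4)$), and $P_4$ is singular only over the separable quartic $\Delta_1$.

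The one genuinely new point, and the step I expect to cost the most effort, is the behaviour of $P_2$ at the $I_4$ fibres over $t=\pm1$. Exactly as in the generic case, $P_2$ is singular precisely on the zero locus of $\Delta_3=\gamma(A+B)-\beta B$; when $\beta=\gamma$ this is $\gamma A=\gamma(t^2-1)^2$, i.e. the locus $\{t=\pm1\}$, and there the fibre $y^2=x(x+4\gamma)^2$ has its node at $x=-4\gamma=x(P_2)|_{t=\pm1}$. One must show that $\overline{P_2}$ meets the $2$-torsion component $\Theta_2$ of the $I_4$ fibre rather than $\Theta_1$ or $\Theta_3$ — e.g. by verifying that $2P_2$ (equivalently $P_2-T_2$) reduces to a non-singular point there — so that $c_v(P_2)=\tfrac{2\cdot2}{4}=1$ at each of $t=\pm1$ and $\sum_v c_v(P_2)=2$, giving $\langle P_2,P_2\rangle=2\chi(S)-2=2$, in agreement with the generic-case count of $4\times\tfrac12$. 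The same incidence check (only $P_2$ touches the node over $t=\pm1$, and it lies on the $2$-torsion component) shows that $t=\pm1$ contributes nothing to any $c_v(P_i,P_j)$ with $i\neq j$. The Gram matrix is therefore diagonal with entries $4,2,2$; its determinant $16$ is nonzero, so $P_1,P_2,P_4$ are linearly independent and span a rank-$3$ subgroup of $E_{(\alpha,\beta,\gamma)}(\overline{\mathbb{Q}}(t))$.
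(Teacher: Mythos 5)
Your proposal is correct and takes essentially the same route as the paper, whose proof is simply to mimic the generic-case height computation of Lemma \ref{lem:rank_4_generic} after observing that $P_{1},P_{2},P_{4}$ are the only points from \S\ref{subsec:points_general} that remain well defined and non-trivial when $\beta=\gamma$. The extra care you take at the two $I_{4}$ fibres over $t=\pm1$ (where $\Delta_{3}$ degenerates to $\gamma(t^2-1)^2$ and $\overline{P_{2}}$ passes through the node) is precisely the adaptation the paper leaves implicit, and your claim there is right: a two-step blow-up, or your criterion via $2P_{2}$, shows $\overline{P_{2}}$ meets the middle component of each $I_{4}$ fibre, so $c_{v}(P_{2})=1$ at $t=1$ and $t=-1$, giving $\langle P_{2},P_{2}\rangle=4-2=2$ as required.
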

\begin{proof}
We mimic the proof of Lemma \ref{lem:S1_case}.
\end{proof}

\begin{proposition}
There exists a triple $(\alpha,\beta,\gamma)\in\mathcal{S}_{3}$ and
\[\textrm{rank}\ E_{(\alpha,\beta,\gamma)}(\overline{\mathbb{Q}}(t))=3.\]
\end{proposition}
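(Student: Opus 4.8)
The plan is to combine the lower bound just obtained with an upper bound coming from reduction modulo two carefully chosen primes, following the pattern of the propositions for $\mathcal{S}_1$ and $\mathcal{S}_2$. By the preceding lemma, for every $(\alpha,\beta,\gamma)\in\mathcal{S}_3$ the points $P_1,P_2,P_4$ span a rank $3$ subgroup of $E_{(\alpha,\beta,\gamma)}(\overline{\mathbb{Q}}(t))$, so $\rank E_{(\alpha,\beta,\gamma)}(\overline{\mathbb{Q}}(t))\geq 3$; it therefore suffices to produce one triple with rank at most $3$. Reading off the bad fibres from the displayed discriminant — two fibres of type $I_4$ at $t=\pm1$ and eight fibres of type $I_2$ at the roots of the two quartic factors, so that $\sum_v e(F_v)=2\cdot4+8\cdot2=24=12\chi(S)$, confirming $\chi(S)=2$ and that $S$ is a K3 surface, smooth at $t=\infty$ — the Shioda--Tate formula gives $\rho(S)=2+\bigl(2\cdot3+8\cdot1\bigr)+\rank E_{(\alpha,\beta,\gamma)}(\overline{\mathbb{Q}}(t))=16+\rank E_{(\alpha,\beta,\gamma)}(\overline{\mathbb{Q}}(t))$. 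Hence it is enough to find a triple in $\mathcal{S}_3$ for which $\rho(S)\leq 19$.

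To do this I would fix a small triple in $\mathcal{S}_3$, for instance $(\alpha,\beta,\gamma)=(3,1,1)$ (here $\alpha\beta\gamma\neq0$, $\alpha\neq\gamma$ and $\alpha\neq-\beta$, so it lies in $\mathcal{S}_3$), and look for two primes $p$ of good reduction for $S=\mathcal{E}_{(\alpha,\beta,\gamma)}$ at which the full N\'{e}ron--Severi group of $S_{\overline{\mathbb{F}}_p}$ is already defined over $\mathbb{F}_p$. Good reduction is verified with Lemma \ref{lem:good_reduction} applied to the globally minimal model \eqref{eq:standard_abc_family} (globally minimal for such a triple, with the $I_2$/$I_4$ configuration above). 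For each such prime one counts $\#S(\mathbb{F}_{p^n})$ for small $n$, and via the Grothendieck--Lefschetz trace formula as in \cite{Naskrecki_Acta} recovers the characteristic polynomial $P_p(x)$ of Frobenius $\Phi_p$ on $H^2_{\textrm{\'{e}t}}(S_{\overline{\mathbb{F}}_p},\mathbb{Q}_\ell(1))$; one expects a factorization $P_p(x)=(x-p)^{20}q_p(x)$ with $q_p$ an irreducible quadratic that is not $p^2$ times a cyclotomic polynomial, which already pins down $\rho(S_{\overline{\mathbb{F}}_p})=20$. Corollary \ref{cor:disc_mod_p} then returns $\Delta(\NS(S_{\overline{\mathbb{F}}_p}))$ modulo $(\mathbb{Q}^\times)^2$ from $q_p(p)$ and $p^{-1}$.

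The decisive, non-routine step is to hit two primes for which these two discriminants differ modulo squares. Indeed, if $\rho(S_{\overline{\mathbb{Q}}})$ were $20$ then $\rank E_{(\alpha,\beta,\gamma)}(\overline{\mathbb{Q}}(t))=4$ by the count above, $\rho(S_{\overline{\mathbb{F}}_p})=20$ forces $\spec_p$ to have finite-index image in $\NS(S_{\overline{\mathbb{F}}_p})$ for both primes, and then $\Delta(\NS(S_{\overline{\mathbb{F}}_p}))\equiv\Delta(\NS(S_{\overline{\mathbb{Q}}}))\pmod{(\mathbb{Q}^\times)^2}$ would be the same for both choices of $p$ — a contradiction. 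Whether a fixed small triple admits such a pair of primes is not guaranteed a priori, so in practice one searches over a few triples and primes until the square classes separate, exactly as was done for $\mathcal{S}_1$ and $\mathcal{S}_2$. Once two such primes are found, $\rho(S_{\overline{\mathbb{Q}}})\leq 19$, hence $\rank E_{(\alpha,\beta,\gamma)}(\overline{\mathbb{Q}}(t))\leq 3$, and together with the lower bound this yields equality. If no such pair appears for the chosen triple, a single prime whose Frobenius characteristic polynomial has $R_{\Phi_p}=19$ would instead finish the argument directly via Corollary \ref{cor:spec_NS}.
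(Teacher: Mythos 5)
Your proposal follows essentially the same route as the paper: lower bound from the preceding lemma via the points $P_1,P_2,P_4$, the Shioda--Tate count $\rho(S)=16+\rank$ for the $I_4+I_4+8\,I_2$ configuration, and an upper bound $\rho(S_{\overline{\mathbb{Q}}})\leq 19$ obtained by comparing N\'{e}ron--Severi discriminants modulo squares at two primes of good reduction via the Frobenius characteristic polynomial and Corollary \ref{cor:disc_mod_p}. The only difference is that the paper exhibits the explicit witness, namely $(\alpha,\beta,\gamma)=(5,1,1)$ with $p=29$ and $p=101$, where $P_{29}(x)=(x-29)^{20}(x^2+22x+29^2)$ and $P_{101}(x)=(x-101)^{20}(x^2+106x+101^2)$ give discriminant classes $-5$ and $-23$ modulo $(\mathbb{Q}^{\times})^2$, whereas you leave the choice of triple and primes to a search that you correctly describe but do not carry out.
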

\begin{proof}
We put $\alpha=5$, $\beta=1$ and $\gamma=1$ and we consider $S=\mathcal{E}_{(5,1,1)}$. The surface $S$ has good reduction at primes $p=29$ and $p=101$ with its N\'{e}ron-Severi group defined already over $\mathbb{F}_{p}$. The characteristic polynomials of Frobenius are as follows
\begin{align*}
P_{29}(x)&=(x-29)^{20}(x^2+22x+29^2)\\
P_{101}(x)&=(x-101)^{20}(x^2+106x+101^2)\\
\end{align*}
We obtain $\modulo{\Delta(\NS(S_{\overline{\mathbb{F}}_{29}}))}{-5}{(\mathbb{Q}^{\times})^2}$ and $\modulo{\Delta(\NS(S_{\overline{\mathbb{F}}_{97}}))}{-23}{(\mathbb{Q}^{\times})^2}$. This leads to the conclusion that $\rho(S)\leq 19$ which implies the statement of the proposition.
\end{proof}

\subsection{Further degeneration}
In this paragraph we list the remaining two special cases which are determined by triples $(\alpha,\beta,\gamma)$ in the intersections of $U\cap V_{1,0,-1}\cap V_{1,1,0}$ and  $U\cap V_{0,1,-1}\cap V_{1,1,0}$. 

\textbf{Type $(1,-1,-1)$}. The Kodaira types of singular fibers that appear in this case are:
\begin{equation}
\begin{array}{rl}
\textrm{point} & \textrm{fiber type}\\
\hline
 t^4+1=0 & I_{2}\\
 t=1 & I_{4}\\
 t^2+1=0 & I_{4}\\
 t=-1 & I_{4}
\end{array}
\end{equation}
We can easily find two linearly independent points:
\begin{align*}
P_{1}&=(0,fgh)\\
P_{2}&=(-f^2+g^2,\sqrt{2}f^2 g)
\end{align*}
with height pairing matrix 
\begin{equation}
\left(\begin{array}{cc}
 4 & 0\\
 0 & 2
\end{array}\right)
\end{equation}
and they span a rank $2$ subgroup which is of finite index in the full Mordell-Weil group.

\textbf{Type $(1,-1,1)$}.The Kodaira types of singular fibers that appear in this case are:
\begin{equation}
\begin{array}{rl}
\textrm{point} & \textrm{fiber type}\\
\hline
 t=\infty & I_{4}\\
 t=0 & I_{4}\\
 t^2+1=0 & I_{4}\\
 t^4 + 6t^2 + 1=0 & I_{2}
\end{array}
\end{equation}
We find two points:
\begin{align*}
P_{1}&=(0,\sqrt{-1}fgh)\\
P_{3}&=(-1/2 f^2, 1/(2\sqrt{-2}) f (g^2+h^2))\\
\end{align*}
with height pairing matrix 
\begin{equation}
\left(\begin{array}{cc}
 4 & 0\\
 0 & 2
\end{array}\right)
\end{equation}
The points $P_{1}$ and $P_{3}$ are linearly independent.

\subsection{Case $\alpha \beta - \alpha \gamma - \beta \gamma=0$}
The equation $\alpha \beta - \alpha \gamma - \beta \gamma=0$ is homogeneous and we can parametrize it as a quadric in $\mathbb{P}^{2}$. We consider the triples $(\alpha,\beta,\gamma)\in U$ that are parametrized by $r\neq 0$ as follows
\begin{equation}\label{eq:parameters_quadr}
\alpha=r(r-1),\beta=r,\gamma=r-1.
\end{equation}
The curve $E_{(\alpha,\beta,\gamma)}$ in this case has the Weierstrass equation
\begin{equation}\label{eq:curve_I0_star}
y^2=(x+r(r-1)(t^2-1)^2)(x+r(4t^2))(x+(r-1)(t^2+1)^2) 
\end{equation}
with discriminant $\Delta=16r^2(r-1)^2\delta^6$, where $\delta =-1 + r - 2 t^2 - 2 r t^2 - t^4 + r t^4$. It can be easily transformed into another form
\[(-1)\delta (y')^2=x'(x'-1)(x'-r)\]
where $x+r(4t^2)=x'(-1)\delta$ and $y=y'\delta^2$. The curve $(y')^2=x'(x'-1)(x'-r)$ determines a constant fibration so every fiber is of type $I_{0}$ and the twist by $-\delta$ introduces four $I_{0}^{*}$ fibers over the points $t_{0}$ such that $\delta(t_{0})=0$. The curve \eqref{eq:curve_I0_star} is hence a globally minimal Weierstrass model. Application of Shioda-Tate formula allows us to compute the bound $\rank E_{(\alpha,\beta,\gamma)}(\overline{\mathbb{Q}}(t))\leq 2$.

We observe that the equation $-\delta=s^2$ with respect to variables $t$ and $s$ defines a quartic model of elliptic curve $(y')^2=x'(x'-1)(x'-r)$. This means that the isotrivial elliptic surface attached to \eqref{eq:curve_I0_star} is a Kummer fibration $\mathcal{E}$ related to two elliptic curves $E_{1}:-\delta=s^2$ and $E_{2}:(y')^2=x'(x'-1)(x'-r)$. They are isomorphic over $\overline{\mathbb{Q}}$ and by \cite[\S 12.6]{Shioda_Schutt} it follows that
$\rho(\mathcal{E})=18+\textrm{rank}\mathop{Hom}(E_{1},E_{2})$. This implies that we have precisely rank $2$ if $E_{1}$ has complex multiplication, and otherwise rank $1$. We have the usual point of infinite order in any case on the curve \eqref{eq:curve_I0_star}.
\begin{align*}
P_{1}&=(0,2 (r-1) r t \left(t^4-1\right))
\end{align*}
The other linearly independent point in the CM case can be obtained as follows. We consider a quadratic twist of $E_{2}$ by $-\delta$. We denote this curve by $E_{2}^{(-\delta)}$. Let $\sigma$ denote the automorphism of the field $\overline{\mathbb{Q}}(t)(\sqrt{-\delta})$ determined by $\sigma(\sqrt{-\delta})=-\sqrt{-\delta}$. There is a natural isomorphism $\phi$ between the group of points $\tilde{H}=\{P:E_{2}(\overline{\mathbb{Q}}(t)(\sqrt{-\delta})):\sigma(P)=-P\}$ and the group $E_{2}^{(-\delta)}(\overline{\mathbb{Q}}(t))$. For a point $P\in\tilde{H}$ we have $P=(\alpha,\beta\sqrt{-\delta})$ for $\alpha,\beta\in\overline{\mathbb{Q}}(t)$. Then $\phi(P)=(\alpha (-\delta),\beta (-\delta)^2)$. Since $E_{1}$ and $E_{2}$ are isomorphic, we assume for now that $E_{1}=E_{2}$ to simplify the notation. Since $E_{2}$ is a CM curve, then $\textrm{End}(E_{2})$ is isomorphic to an order $R_{K}$ in some quadratic imaginary extension $K$ of $\mathbb{Q}$. We pick an element $\omega$ of $R_{K}$ that is not in $\mathbb{Z}$. It induces an endomorphism $[\omega]\in \textrm{End}(E_{2})$. On the curve $E_{2}$ our point $P_{1}$ from \eqref{eq:curve_I0_star} induces a point
\[P_{1}'=\left(\frac{4rt^2}{-\delta},\frac{2r(r-1)t(t^2-1)(1+t^2)}{\delta\sqrt{-\delta}} \right).\]
It maps via $\phi$ to $P_{1}^{(-\delta)}$ on $E_{2}^{(-\delta)}$. We observe that for any point $Q\in E_{2}(\overline{\mathbb{Q}}(t)(\sqrt{-\delta}))$ the endomorphism $[\omega]$ is equivariant with respect to $\sigma$: $[\omega](\sigma(Q))=\sigma([\omega](Q))$. In particular
$[\omega](\sigma(P_{1}'))=\sigma([\omega](P_{1}'))$ and this implies $[\omega](P_{1}')\in\tilde{H}$, and via $\phi$ it corresponds to a point in $E_{2}^{(-\delta)}(\overline{\mathbb{Q}}(t))$ which we denote by abuse of notation by $[\omega](P_{1}^{(-\delta)})$. The points $P_{1}^{(-\delta)}$ and $[\omega](P_{1}^{(-\delta)})$ cannot be linearly dependent because of the choice of $[\omega]$, so they span a rank two subgroup in $E_{2}^{(-\delta)}(\overline{\mathbb{Q}}(t))$. We denote for now the curve $E_{2}$ by $E_{r}$. We have proved the following lemma.
\begin{lemma}
Let $(\alpha,\beta,\gamma)\in U$ be such that $\alpha \beta - \alpha \gamma - \beta \gamma=0$. Choose parameter $r$ like in \eqref{eq:parameters_quadr}. Then the following holds
\begin{equation*}
\rank E_{(\alpha,\beta,\gamma)}(\overline{\mathbb{Q}}(t))=\left\{\begin{array}{ll}
1 & \textrm{if } E_{r}\textrm{ does not have complex multiplication,}\\
2 & \textrm{otherwise.}
\end{array}\right.
\end{equation*}
\end{lemma}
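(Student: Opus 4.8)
The plan is to establish the rank dichotomy by combining the Shioda--Tate upper bound already derived with an explicit construction of enough independent points in the complex multiplication case and a descent-free argument in the non-CM case. First I would record that equation \eqref{eq:curve_I0_star} has precisely four fibres of type $I_0^*$ (over the roots of $\delta$) and no other bad fibres, so that $\sum_{v\in B}(m_v-1)=4\cdot 4=16$ and $\chi=2$; hence the Shioda--Tate formula gives $\rho(\mathcal{E})=2+16+\rank E_{(\alpha,\beta,\gamma)}(\overline{\mathbb{Q}}(t))$, i.e. the Mordell--Weil rank equals $\rho(\mathcal{E})-18$. The surface $\mathcal{E}$ being the isotrivial Kummer-type fibration attached to the pair $(E_1,E_2)$ with $E_1\cong E_2\cong E_r$, the formula $\rho(\mathcal{E})=18+\rank\operatorname{Hom}(E_1,E_2)$ from \cite[\S 12.6]{Shioda_Schutt} applies directly, and $\rank\operatorname{Hom}(E_r,E_r)$ is $1$ when $E_r$ has no CM and $2$ when it does. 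Putting these two identities together immediately yields the stated formula for the Mordell--Weil rank.

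Second, for the lower-bound side I would exhibit the points concretely so the statement is not merely a Picard-number count. In both cases the point $P_1=(0,2(r-1)rt(t^4-1))$ visibly lies on \eqref{eq:curve_I0_star} and has infinite order (its height is positive: $\overline{P_1}.\overline{O}=0$ and the correcting terms at the $I_0^*$ fibres give $\langle P_1,P_1\rangle=2\chi(S)-\sum_v c_v(P_1)>0$), which accounts for the rank $\geq 1$ clause. When $E_r$ has CM, I would use the transfer of $P_1$ to the quadratic twist $E_2^{(-\delta)}$ via the isomorphism $\phi$ constructed above, producing $P_1^{(-\delta)}$, and then apply the CM endomorphism $[\omega]\in\operatorname{End}(E_2)$, which by the $\sigma$-equivariance noted in the excerpt descends to an endomorphism of $E_2^{(-\delta)}(\overline{\mathbb{Q}}(t))$ and thus gives a second point $[\omega](P_1^{(-\delta)})$; since $\omega\notin\mathbb{Z}$, no nontrivial $\mathbb{Z}$-linear combination $m P_1^{(-\delta)}+n[\omega](P_1^{(-\delta)})$ can vanish unless $m+n\omega=0$ in $R_K$, forcing $m=n=0$, so these two points are independent and the rank is $\geq 2$.

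Finally I would reconcile the two sides. In the non-CM case the upper bound $\rank\leq 1$ from Shioda--Tate together with the infinite-order point $P_1$ pins the rank at exactly $1$. In the CM case the upper bound is $\rank\leq 2$ and the independent pair just constructed forces equality; one should also note that the curve \eqref{eq:curve_I0_star} is a twist of $E_2^{(-\delta)}$ by a quantity in $\overline{\mathbb{Q}}(t)$, so that $E_{(\alpha,\beta,\gamma)}(\overline{\mathbb{Q}}(t))$ and $E_2^{(-\delta)}(\overline{\mathbb{Q}}(t))$ have the same rank — indeed the substitution $x+4rt^2=x'(-\delta)$, $y=y'\delta^2$ exhibited in the excerpt is exactly this identification. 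The main obstacle I expect is bookkeeping rather than conceptual: one must be careful that the four $I_0^*$ fibres are genuinely disjoint and that $\delta$ is separable (which holds for $(\alpha,\beta,\gamma)\in U$ after the parametrization \eqref{eq:parameters_quadr}), and one must check that applying $[\omega]$ does not accidentally land back in the span of $P_1^{(-\delta)}$; both follow from the structure of $R_K$ as a rank-$2$ $\mathbb{Z}$-module, but they deserve an explicit remark. Everything else is an application of results already in hand.
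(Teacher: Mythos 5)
Your proposal is correct and follows essentially the same route as the paper: the Shioda--Tate formula with the four $I_0^*$ fibres giving $\operatorname{rank}=\rho(\mathcal{E})-18$, combined with the Kummer-fibration formula $\rho(\mathcal{E})=18+\operatorname{rank}\operatorname{Hom}(E_1,E_2)$ from \cite[\S 12.6]{Shioda_Schutt} and $E_1\cong E_2\cong E_r$, plus the explicit points $P_1$ and $[\omega](P_1^{(-\delta)})$ in the CM case. The only addition is your slightly more detailed independence argument via $m+n\omega\neq 0$ in $R_K$, which matches the paper's brief remark.
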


\begin{example}
In several simple cases of complex multiplication on $E_{2}$ we can actually compute explicitly the point $[\omega](P_{1}^{(-\delta)})$ for some particular choice of $\omega$. We freely adopt the results of \cite[II, Prop. 2.3.1]{Silverman_book}

\begin{figure}
\begin{equation*}
 \begin{array}{cccc}
  K & R_{K} & \omega & j(E)\\
  \mathbb{Q}(\sqrt{-1}) & \mathbb{Z}[\sqrt{-1}] & 1+\sqrt{-1} & 1728\\
  \mathbb{Q}(\sqrt{-2}) & \mathbb{Z}[\sqrt{-2}] & \sqrt{-2} & 8000\\
  \mathbb{Q}(\sqrt{-7}) & \mathbb{Z}[\sqrt{-7}] & \frac{1+\sqrt{-7}}{2} & -3375
 \end{array}
\end{equation*}
\caption{Three elliptic curves $E$ with j-invariant $j(E)$ and endomorphism ring isomorphic to $R_{K}$} 
\end{figure}
\begin{figure}
\begin{itemize}
\item $E: y^2=x^3+x,\quad j=1728, \quad \omega=1+\sqrt{-1}$
\[[\omega](x,y)=\left(\omega^{-2}(x+1/x),\omega^{-3} y(1-1/x^2)\right)\]
\item $E:y^2=x^3+4x^2+2x,\quad j=8000,\quad \omega=\sqrt{-2}$
\[ [\omega](x,y)=(\omega^{-2}(x+4+2/x),\omega^{-3} y (1-2/x^2) )\]
\item $E:y^2=x^3-35x+98,\quad j=-3375,\quad \omega =(1+\sqrt{-7})/2$
\[[\omega](x,y)=(\alpha^{-2}(x-7(1-\omega)^4/(x+\omega^2-2)),\omega^{-3} y (1+7(1-\omega)^4)/(x+\omega^2-2)^2  )  \]
\end{itemize}
\caption{Certain endomorphisms of degree $2$ in $\textrm{End}(E)$}\label{fig:Endo_examples}
\end{figure}
For a fixed parameter $r$ we choose an isomorphism between $E_{r}$ and curve $E$ from Figure \ref{fig:Endo_examples}. This induces via the chain of morphisms $E_{r}\xrightarrow{\textrm{iso}} E\xrightarrow{[\omega]} E\xrightarrow{\textrm{iso}^{-1}} E_{r}$ the endomorphism $[\omega]$ on $E_{r}$. We skip the simple but tedious algebraic manipulations and offer the final results

\begin{itemize}
 \item $f=2$, $j=1728$, $\eta=\sqrt{8\sqrt{-1}}$
 \[P_{2}^{(-\delta)}=\left(\frac{(1 - (2 - 4 \sqrt{-1}) t^2 + t^4)^2}{(1 + t^2)^2 (1 - 6 t^2 + t^4)}, 
 \frac{\eta t(-1 + 5t^2 - 26t^4 + 26t^6 - 5t^8 + t^{10})}{(1 + t^2)^3 (1 - 6 t^2 + t^4)^2}\right)\]
Height pairing matrix of $(P_{1}^{(-\delta)},P_{2}^{(-\delta)})$
\begin{equation*}
 \left(\begin{array}{cc}
  4 & 4\\
  4 & 8
 \end{array}\right)
\end{equation*}
\item $f=3+2\sqrt{2}$, $j=8000$, $\eta=\sqrt{-3 - 2\sqrt{2}}$
\[P_{2}^{(-\delta)}=\left(\frac{(\sqrt{2}+2) \left(t^4-1\right)^2}{4 t^2 \left(\sqrt{2} t^4+\sqrt{2}-4 t^2\right)},\frac{\eta \left(4 \sqrt{2} t^{10}-4 \sqrt{2} t^2-t^{12}-5 t^8+5 t^4+1\right)}{8 \sqrt{2} t^3 \left(-2 \sqrt{2} t^2+t^4+1\right)^2}\right)\]
Height pairing matrix of $(P_{1}^{(-\delta)},P_{2}^{(-\delta)})$
\begin{equation*}
 \left(\begin{array}{cc}
  4 & 0\\
  0 & 8
 \end{array}\right)
\end{equation*}
\item $f=1/2(1 + 3\sqrt{-7})$, $j=-3375$
\begin{align*}
x(P_{2}^{(-\delta)})&=\frac{(3 \sqrt{-7}-1) \left((-\sqrt{-7}-3) t^2+4 t^4+4\right)^2}{64 t^2 \left((3-\sqrt{-7}) t^4-3 (1-\sqrt{-7}) t^2-\sqrt{-7}+3\right)}\\
y(P_{2}^{(-\delta)})&= \frac{(\sqrt{-7}+1) \left(t^4-1\right) \left(3 (\sqrt{-7}-5) t^6+3 (\sqrt{-7}-5) t^2+4 t^8+24 t^4+4\right)}{t^3 \left(3 (\sqrt{-7}-5) t^2+8 t^4+8\right)^2}
\end{align*}

Height pairing matrix of $(P_{1}^{(-\delta)},P_{2}^{(-\delta)})$
\begin{equation*}
 \left(\begin{array}{cc}
  4 & 2\\
  2 & 8
 \end{array}\right)
\end{equation*}
\end{itemize}
\end{example}

\section{Case $\alpha=\beta=\gamma=1$}\label{sec:case_1_1_1}
The remaining case of triple $(\alpha,\beta,\gamma)=(1,1,1)$ was studied extensively in \cite{Naskrecki_thesis} and \cite{NaskreckiMW}. In this section we offer an alternative proof  of \cite[Lem. 5.8]{NaskreckiMW}. We apply theorems from \cite{Winkler_Rational} to avoid elaborate height computations performed in \cite{NaskreckiMW}. Moreover, the techniques applied in this section justify our previous restriction of family \eqref{eq:general_abc_family} to the case where $(f,g,h)=(t^2-1,2t,t^2+1)$. We conclude in this section that over $\overline{\mathbb{Q}}(t)$ the classification of the curves with parameters $(\alpha,\beta,\gamma)$ do not essentially depend on the choice of fixed parametrizing triple.
%\subsection{Field automorphisms and ranks} % 3 strony

Let $K$ be a field of characteristic $0$ and assume that
\[E:y^2=x^3+Ax^2+Bx\]
is a Weierstrass model of an elliptic curve such that $A,B\in K$. Let $\sigma:K\rightarrow K$ be an automorphism of field $K$. The curve
\[E^{\sigma}:y^2=x^3+\sigma(A)x^2+\sigma(B)x\]
is a Weierstrass model of another elliptic curve over $K$. The map
\begin{align}
E(K)&\rightarrow E^{\sigma}(K)\nonumber\\
(x,y)&\mapsto (\sigma(x),\sigma(y))\label{al:izomorfizm_nad_K}\\
O&\mapsto O\nonumber
\end{align}
establishes an isomorphism of the Mordell-Weil groups $E(K)$ and $E^{\sigma}(K)$.

Now we will recall certain results about rational curves and their parameterizations from \cite[Chap. 4]{Winkler_Rational}.
Let $F$ be any algebraically closed field of characteristic zero. The function $f(t)\in F(t)$ is in \emph{reduced form} when the numerator and denominator of $f$ are coprime.
\begin{definition}
Let $f\in F(t)$ be a rational function in reduced form. If $f$ is non-zero, the degree of $f$ is the maximum of the degrees of the numerator and denominator of $f$. When $f$ is zero, we define its degree to be $-1$. We denote the degree of $f$ by $\deg(f)$.
\end{definition}
Rational functions of degree $1$ are called \emph{linear}. When $f$ is linear, it is of the form $f(t)=(at+b)/(ct+d)$, where $ad-bc\neq 0$ and $a,b,c,d\in F$. We write shortly $f(t)=\gamma t$ where $\gamma$ is the corresponding matrix
\[\gamma=\left(\begin{array}{cc}
          a & b\\
          c & d
         \end{array}\right)\in \textrm{GL}_{2}(F).
\]
By a theorem of Clebsch, every irreducible curve of genus $0$ has a parametrization, cf. \cite[Chap. 4.1]{Winkler_Rational}. In particular, every smooth conic $C:\alpha a^2+\beta b^2=\gamma c^2$ has a parametrization. Let $F_{C}(x,y)=0$ be the affine equation of $C$. We always assume that $F_{C}$ is an irreducible and nonconstant polynomial in $F[x,y]$. The parametrization is a non-constant rational map
\[\mathcal{P}:t\mapsto (\chi_{1}(t),\chi_{2}(t))\]
such that $F_{C}(\chi_{1}(t),\chi_{2}(t))=0$. We define the degree of a parametrization $\mathcal{P}$ as follows
\[\deg\mathcal{P}=\max \{\deg \chi_{1}, \deg\chi_{2} \}.\]
We say that a parametrization $\mathcal{P}$ of curve $C$ is \emph{proper}, when the rational map $\mathcal{P}$ is birational. Two parametrizations of the same curve $C$ are related to each other.
\begin{theorem}[\protect{\cite[Chap. 4, Lemma 4.17]{Winkler_Rational}}]\label{thm:proper_params_change_coordinates}
Let $P$ be any affine parametrization of the rational curve $C$. Let $\mathcal{P}'$ be any other parametrization of $C$.
\begin{itemize}
 \item There exists a nonconstant rational function $f\in F(t)$ such that $\mathcal{P}'(t)=\mathcal{P}(f(t))$.
 \item Parametrization $\mathcal{P}'$ is proper if and only if there exists a linear function $f\in F(t)$ such that $\mathcal{P}'(t)=\mathcal{P}(f(t))$.
\end{itemize}
\end{theorem}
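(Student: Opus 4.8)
The plan is to pass from the geometry of parametrizations to rational maps of $\mathbb{P}^{1}$ and to function fields, where the word \emph{proper} translates into \emph{birational}. I regard a parametrization of $C$ as a dominant (equivalently, non-constant) rational map $\mathbb{P}^{1}\dashrightarrow C$. The key structural fact is that $\mathcal{P}$, which we take to be proper (a proper parametrization of a rational curve always exists by Clebsch/L\"{u}roth, and in our application it is the standard one $t\mapsto(t^{2}-1:2t:t^{2}+1)$), is a birational map $\mathbb{P}^{1}\dashrightarrow C$ and therefore admits a rational inverse $\mathcal{P}^{-1}:C\dashrightarrow\mathbb{P}^{1}$, with $\mathcal{P}\circ\mathcal{P}^{-1}$ equal to the identity of $C$ as a rational map.

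For the first bullet I would simply put $f:=\mathcal{P}^{-1}\circ\mathcal{P}':\mathbb{P}^{1}_{t}\dashrightarrow\mathbb{P}^{1}$. A non-constant rational map between two copies of $\mathbb{P}^{1}$ is exactly a non-constant element of $F(t)$, and $f$ is non-constant because $\mathcal{P}'$ is non-constant while $\mathcal{P}^{-1}$, being birational, cannot collapse the (dense) image of $\mathcal{P}'$ to a point. Composing on the left with $\mathcal{P}$ gives $\mathcal{P}(f(t))=(\mathcal{P}\circ\mathcal{P}^{-1})(\mathcal{P}'(t))=\mathcal{P}'(t)$. In function-field language: properness says $\mathcal{P}^{\ast}:F(C)\xrightarrow{\ \sim\ }F(u)$ is an isomorphism, so $f$ is the rational function corresponding to $(\mathcal{P}^{\ast})^{-1}(u)$ under $\mathcal{P}'^{\ast}$, and it is the unique such function.

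For the second bullet, the direction $(\Leftarrow)$ is immediate: a linear $f$ represents an element of $\operatorname{Aut}(\mathbb{P}^{1})=\mathrm{PGL}_{2}(F)$, hence is birational, so $\mathcal{P}'=\mathcal{P}\circ f$ is a composition of two birational maps and is therefore proper. For $(\Rightarrow)$, if $\mathcal{P}'$ is proper then it too is birational, so $f=\mathcal{P}^{-1}\circ\mathcal{P}'$ is a composition of two birational maps $\mathbb{P}^{1}\dashrightarrow\mathbb{P}^{1}$ and hence birational; a birational self-map of the smooth projective curve $\mathbb{P}^{1}$ is an automorphism, and these are precisely the linear fractional functions, so $f$ is linear.

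I do not expect a genuine obstacle here: the statement is foundational and the argument is a short chain of translations between the birational-geometry and function-field pictures. The one point where care is needed is the identity $[F(t):F(f(t))]=\deg f$ for $f$ in reduced form, which is exactly what makes ``$f$ birational'' equivalent to ``$f$ linear'' and what pins down $\operatorname{Aut}(\mathbb{P}^{1})$ as the linear fractional maps; this is standard (it is L\"{u}roth's theorem together with the degree count for the induced field extension), but it is the technical heart of the second bullet.
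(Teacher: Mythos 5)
Your argument is correct, and since the paper does not prove this statement at all (it is quoted from Winkler's book, Lemma 4.17), there is nothing internal to compare against; your birational/function-field proof is essentially the standard textbook argument: properness of $\mathcal{P}$ gives a rational inverse, $f:=\mathcal{P}^{-1}\circ\mathcal{P}'$ yields the first bullet, and the second bullet reduces to the fact that a birational self-map of $\mathbb{P}^{1}$ is an element of $\mathrm{PGL}_{2}(F)$, i.e.\ a linear fractional function. One point you handled well and that deserves emphasis: the statement as transcribed in the paper says ``any affine parametrization'' $\mathcal{P}$, but the hypothesis that $\mathcal{P}$ is \emph{proper} (which you explicitly assume, and which is present in the cited source) is genuinely needed -- without it even the first bullet fails, e.g.\ for the line $y=0$ with $\mathcal{P}(t)=(t^{2},0)$ and $\mathcal{P}'(t)=(t,0)$ there is no $f\in F(t)$ with $f(t)^{2}=t$. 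Your use of the identity $[F(t):F(f(t))]=\deg f$ to tie ``birational'' to ``linear'' is exactly the degree count underlying the theorem that follows in the paper (Theorem \ref{theorem:params_main}), so the two statements fit together as intended.
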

Now it is important for us to establish a relation between the degree of the defining polynomial $F_{C}(x,y)$ of our rational curve and the degree of any proper parametrization $\mathcal{P}$.

\begin{theorem}\label{theorem:params_main}
Let $C$ be an affine rational curve defined over $F$ with defining polynomial $F_{C}(x,y)\in F[x,y]$ and let $\mathcal{P}=(\chi_{1},\chi_{2})$ be a parametrization of $C$. Then $\mathcal{P}$ is proper if and only if
\[\deg \mathcal{P} = \max \{\deg_{x}(F_{C}),\deg_{y}(F_{C})\}.\]
Furthermore if $\mathcal{P}$ is proper and $\chi_{1}$ is nonzero, then $\deg\chi_{1} = \deg_{y}(F_{C})$; similarly if $\chi_{2}$ is nonzero then $\deg\chi_{2} = \deg_{x}(F_{C})$.
\end{theorem}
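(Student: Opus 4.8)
The plan is to prove all three assertions at once by translating the quantities $\deg\mathcal P$, $\deg_x F_C$, $\deg_y F_C$ into degrees of extensions of function fields. Let $F(C)=F[x,y]/(F_C)$ be the function field of $C$ (this makes sense because $F_C$ is irreducible and nonconstant), and set $L=F(\chi_1,\chi_2)\subseteq F(t)$. Since a parametrization is by definition a nonconstant rational map, $\mathcal P$ is dominant onto $C$ and induces an embedding of fields $\mathcal P^{*}\colon F(C)\hookrightarrow F(t)$ carrying the classes of $x$ and $y$ to $\chi_1$ and $\chi_2$; its image is precisely $L$, so $\mathcal P^{*}$ is an isomorphism $F(C)\xrightarrow{\ \sim\ }L$. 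By the paper's definition, $\mathcal P$ is proper exactly when this rational map is birational, i.e.\ exactly when $[F(t):L]=1$. Write $d=[F(t):L]$ for this degree.

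The heart of the argument will be the two identities $\deg\chi_1=d\cdot\deg_y F_C$ and $\deg\chi_2=d\cdot\deg_x F_C$. By the symmetry $x\leftrightarrow y$, and since $F_C$ nonconstant forces at least one of $\deg_x F_C,\deg_y F_C$ to be positive, I may assume $\deg_y F_C\ge 1$ and prove the first identity. Then $F_C$ genuinely involves $y$, hence is primitive as a polynomial in $y$ over $F[x]$ (a common nonconstant divisor in $x$ would contradict irreducibility), so Gauss's lemma gives that $F_C(x,Y)$ is irreducible over $F(x)$; it is therefore, up to a scalar, the minimal polynomial of the class of $y$ over $F(x)$, so $[F(C):F(x)]=\deg_y F_C$. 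Transporting along $\mathcal P^{*}$, which sends $F(x)$ to $F(\chi_1)$, yields $[L:F(\chi_1)]=\deg_y F_C$. Moreover $\deg_y F_C\ge 1$ forces $\chi_1$ to be nonconstant (a constant value $c$ would make $x-c$ divide $F_C$), so the classical fact that the degree of a reduced rational function equals the degree of the extension it generates gives $[F(t):F(\chi_1)]=\deg\chi_1$. Multiplying the two factors of the tower $F(\chi_1)\subseteq L\subseteq F(t)$ then gives $\deg\chi_1=d\cdot\deg_y F_C$. If $\deg_x F_C\ge 1$ the same reasoning gives $\deg\chi_2=d\cdot\deg_x F_C$; if instead $\deg_x F_C=0$ then $F_C=c(y-b)$ for constants $b,c$, so $\chi_2\equiv b$ and $\deg\chi_2\le 0=d\cdot\deg_x F_C$, with equality as soon as $\chi_2\neq 0$.

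From here the conclusion is immediate: taking the maximum,
\[
\deg\mathcal P=\max\{\deg\chi_1,\deg\chi_2\}=d\cdot\max\{\deg_x F_C,\deg_y F_C\},
\]
and since $\max\{\deg_x F_C,\deg_y F_C\}\ge 1$, this product equals $\max\{\deg_x F_C,\deg_y F_C\}$ if and only if $d=1$, i.e.\ if and only if $\mathcal P$ is proper; this gives the asserted equivalence. When $\mathcal P$ is proper, $d=1$, whence $\deg\chi_1=\deg_y F_C$ and, provided $\chi_2\neq 0$, $\deg\chi_2=\deg_x F_C$, which is the ``furthermore'' part. The only real subtlety I anticipate is the bookkeeping in the degenerate configurations where $C$ is a horizontal or vertical line and one coordinate $\chi_i$ is constant, so that the clean identity $[F(t):F(\chi_i)]=\deg\chi_i$ fails; these are precisely the cases disposed of above by using the $x\leftrightarrow y$ symmetry to keep $\chi_1$ nonconstant. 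Everything else rests on two standard facts — the behaviour of degrees of rational functions under field extensions, and Gauss's lemma for the partial degree of $F_C$ — so the argument should present no genuine obstacle. As an alternative route one could instead start from the existence of a proper parametrization (Clebsch) and combine it with Theorem~\ref{thm:proper_params_change_coordinates}, reducing the general case to composing a proper parametrization with a rational function of degree $\deg f$.
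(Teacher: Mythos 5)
Your argument is correct, and it is worth pointing out that the paper itself offers no proof of this statement: it is recalled from Winkler's book (cited as Chapter 4 of \cite{Winkler_Rational}), so you are supplying a proof where the paper merely imports one. Your route — setting $L=F(\chi_1,\chi_2)$, identifying properness with $d:=[F(t):L]=1$, and deriving the two identities $\deg\chi_1=d\cdot\deg_y F_C$ and $\deg\chi_2=d\cdot\deg_x F_C$ from Gauss's lemma, the isomorphism $\mathcal P^{*}\colon F(C)\to L$, the tower $F(\chi_1)\subseteq L\subseteq F(t)$ and the standard formula $[F(t):F(\chi_1)]=\deg\chi_1$ — is the classical function-field proof; the cited source obtains essentially the same two identities by resultant/implicitization arguments (the ``tracing index'' of the parametrization), so the mathematical content agrees, but your version is shorter, coordinate-free, and needs only standard field theory. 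A few small points to tighten, none of which affects correctness: (i) $F(C)$ should be the fraction field of $F[x,y]/(F_C)$, not that quotient ring itself, and when you write $[F(C):F(x)]=\deg_y F_C$ you are implicitly using that the class of $x$ is transcendental in $F(C)$, which is exactly the assumption $\deg_y F_C\geq 1$; (ii) the parenthetical claim that $\chi_1\equiv c$ forces $(x-c)\mid F_C$ needs the extra observation that $\chi_2$ is then nonconstant (otherwise $\mathcal P$ would be a constant map), hence transcendental over $F$, so that $F_C(c,Y)$ has a transcendental root and must vanish identically; (iii) in the degenerate line cases the constant coordinate can be the zero function, of degree $-1$ rather than $0$, so the identity $\deg\chi_i=d\cdot\deg_x F_C$ (resp. $d\cdot\deg_y F_C$) can fail by exactly that unit — but, as your bookkeeping shows, the other coordinate then has degree $d\cdot\max\{\deg_x F_C,\deg_y F_C\}\geq 1$, so the displayed formula for $\deg\mathcal P$, the equivalence with $d=1$, and the ``furthermore'' clause (which is guarded precisely by the hypothesis $\chi_i\neq 0$) all survive. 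Finally, the alternative route you sketch via Clebsch and Theorem \ref{thm:proper_params_change_coordinates} would additionally require knowing the degree statement for at least one explicit proper parametrization to anchor the count, so it is weaker than your main argument, though adequate for the conics actually used in the paper.
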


This theorem easily implies that every proper parametrization $\mathcal{P}_{C}$ of a smooth conic $C$ is of degree $2$ and the other way around every triple of coprime polynomials $f,g,h\in F[t]$ and such that $f^2+g^2=h^2$, $\max\{\deg f,\deg g,\deg h\}=2$ determines a proper parametrization of the curve $a^2+b^2=c^2$. We denote such a parametrization by $\mathcal{P}_{f,g,h}$. Its equation in variable $t$ is given by
\[\mathcal{P}_{f,g,h}(t)=(f(t)/h(t),g(t)/h(t)).\]
Any two such parametrizations $\mathcal{P}_{f,g,h}$ and $\mathcal{P}_{f',g',h'}$ are related by a linear change of variable $\mathcal{P}_{f',g',h'}(t)=\mathcal{P}_{f,g,h}(\gamma t)$ for a $\gamma\in\textrm{GL}_{2}(F)$. We denote by $E_{f,g,h}$ the curve in the form \ref{eq:general_abc_family} with $\alpha=\beta=\gamma$.
\begin{corollary}\label{cor:rank_comparison}
Let $(f,g,h)$ and $(f',g',h')$ be two triples of polynomials in variable $t$ that parametrize the conic $a^2+b^2=c^2$ in a proper way. There exists a linear function $\gamma t$, $\gamma\in\textrm{GL}_{2}(F)$ such that the automorphism $\sigma:t\mapsto \gamma t\in \textrm{Aut}(F(t))$ induces an isomorphism of the Mordell-Weil groups
\[E_{f,g,h}(F(t))\cong E_{f',g',h'}(F(t))\]
where $E_{f',g',h'}$ is $F(t)$-isomorphic to the curve $E_{f,g,h}^{\sigma}$. In particular, we obtain the equality
\[\textrm{rank }E_{f,g,h}(F(t))=2.\]
\end{corollary}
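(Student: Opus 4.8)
The plan is to transport the arithmetic of $E_{f,g,h}$ to that of $E_{f',g',h'}$ along the linear change of parameter relating the two proper parametrizations of the conic, and then to import the value of the rank from one convenient triple. First I would invoke Theorem \ref{theorem:params_main}: since $(f,g,h)$ and $(f',g',h')$ are pairwise coprime with $\max\{\deg f,\deg g,\deg h\}=2$ and satisfy $f^2+g^2=h^2$, both $\mathcal{P}_{f,g,h}$ and $\mathcal{P}_{f',g',h'}$ are \emph{proper} parametrizations of the smooth conic $a^2+b^2=c^2$. By Theorem \ref{thm:proper_params_change_coordinates} (equivalently, by the remark preceding the corollary) there is $\gamma\in\textrm{GL}_{2}(F)$, acting on the parameter by $\gamma t=(at+b)/(ct+d)$ with $ad-bc\ne 0$, such that $\mathcal{P}_{f',g',h'}(t)=\mathcal{P}_{f,g,h}(\gamma t)$; written out,
\[\frac{f'(t)}{h'(t)}=\frac{f(\gamma t)}{h(\gamma t)}\qquad\text{and}\qquad\frac{g'(t)}{h'(t)}=\frac{g(\gamma t)}{h(\gamma t)}\qquad\text{in }F(t).\]

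Next I would promote these equalities of rational functions to an identity of polynomial triples up to a common scalar. Put $\hat f(t)=(ct+d)^2f(\gamma t)$ and likewise $\hat g,\hat h$. As $\deg f,\deg g,\deg h\le 2$, these lie in $F[t]$, they still satisfy $\hat f^2+\hat g^2=\hat h^2$, and — $\gamma$ being invertible — the substitution $t\mapsto\gamma t$ followed by multiplication by $(ct+d)^2$ preserves pairwise coprimality: a spurious common factor $ct+d$ could arise only if two of $f,g,h$ had degree strictly below $2$, which is impossible given $f^2+g^2=h^2$ and $\max\{\deg f,\deg g,\deg h\}=2$. Since $\hat f/\hat h=f'/h'$ and $\hat g/\hat h=g'/h'$ are then equalities of \emph{reduced} fractions, there is $\lambda\in F^{\times}$ with $(f',g',h')=\lambda\,(\hat f,\hat g,\hat h)$; equivalently, with $w:=\lambda(ct+d)^2\in F(t)^{\times}$ one has $f(\gamma t)=f'/w$, $g(\gamma t)=g'/w$, $h(\gamma t)=h'/w$.

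Then I would introduce the automorphism $\sigma\in\textrm{Aut}(F(t))$ fixing $F$ with $\sigma(t)=\gamma t$ (these are exactly the elements of $\textrm{PGL}_{2}(F)$). Applying $\sigma$ coefficientwise — the construction \eqref{al:izomorfizm_nad_K} works for an arbitrary Weierstrass model — gives an isomorphism $E_{f,g,h}(F(t))\xrightarrow{\ \sim\ }E_{f,g,h}^{\sigma}(F(t))$ via $(x,y)\mapsto(\sigma(x),\sigma(y))$, where $E_{f,g,h}^{\sigma}\colon y^2=(x+f(\gamma t)^2)(x+g(\gamma t)^2)(x+h(\gamma t)^2)$. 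Since $f(\gamma t)^2=(f')^2/w^2$ and likewise for $g,h$, the admissible change of variables $(x,y)=(X/w^2,Y/w^3)$ over $F(t)$ (legitimate because $w\in F(t)^{\times}$) carries $E_{f,g,h}^{\sigma}$ to $Y^2=(X+(f')^2)(X+(g')^2)(X+(h')^2)$, i.e.\ to $E_{f',g',h'}$. Hence $E_{f,g,h}^{\sigma}$ is $F(t)$-isomorphic to $E_{f',g',h'}$, and composing the two isomorphisms gives $E_{f,g,h}(F(t))\cong E_{f',g',h'}(F(t))$. In particular the Mordell--Weil ranks agree; taking for $(f',g',h')$ the standard triple $(t^2-1,2t,t^2+1)$, whose curve has rank $2$ over $F(t)$ (the case $\alpha=\beta=\gamma=1$, cf.\ \cite{NaskreckiMW}), we obtain $\textrm{rank}\,E_{f,g,h}(F(t))=2$.

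The only genuinely non-formal step is the passage in the second paragraph from the projective relation between the parametrizations to the affine relation $(f',g',h')=\lambda(\hat f,\hat g,\hat h)$: one must make sure that clearing the denominator $(ct+d)^2$ does not destroy pairwise coprimality, and this is exactly where the non-degeneracy $ad-bc\ne 0$ and the normalization $\max\{\deg f,\deg g,\deg h\}=2$ are both used. Everything after that is the standard dictionary between linear reparametrizations of the base, field automorphisms of $F(t)$, and admissible transformations of Weierstrass models, and the final numerical value of the rank is inherited from the already-analyzed standard triple.
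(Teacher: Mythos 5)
Your proposal is correct and follows essentially the same route as the paper: both use Winkler's results to obtain the linear reparametrization $\gamma$, apply the automorphism $\sigma\colon t\mapsto\gamma t$ to transport the Mordell--Weil group, identify $E_{f,g,h}^{\sigma}$ with $E_{f',g',h'}$ over $F(t)$, and import the rank $2$ from the standard triple. The only (cosmetic) difference is in the middle step: the paper sees the $F(t)$-isomorphism by passing to the model $y^2=x(x-1)(x-(f/g)^2)$, which depends only on $f/g$ and on the identity $f/g(\gamma t)=f'/g'(t)$, whereas you clear the denominator $(ct+d)^2$, track the scalar $\lambda$, and write down the admissible change of variables $(x,y)=(X/w^{2},Y/w^{3})$ explicitly.
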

\begin{proof}
The curve $E_{f,g,h}$ is isomorphic over $F(t)$ to the curve $y^2=x(x-1)(x-(f/g)^2)$. Next we apply Theorem \ref{theorem:params_main} to compare parametrizations $\mathcal{P}_{f,g,h}$ and $\mathcal{P}_{f',g',h'}$. By the theorem there exists an element $\gamma\in\textrm{GL}_{2}(F)$ such that 
\[f/h(\gamma t)=f'/h'(t),\quad g/h(\gamma t)=g'/h'(t).\]
This easily implies that $f/g(\gamma t)=f'/g'(t)$. We apply the automorphism $\sigma$ to the curve $E_{f,g,h}$ and we get a curve $E_{f,g,h}^{\sigma}$ which is $F(t)$-isomorphic to $E_{f',g',h'}$. Finally this implies $E_{f,g,h}(F(t))$ is isomorphic to $E_{f,g,h}^{\sigma}(F(t))$ and to $E_{f',g',h'}(F(t))$. The last statement of the theorem follows for example from the fact that \[\textrm{rank }E_{t^2-1,2t,t^2+1}(F(t))=2\]
cf. \cite[Lemma 3.8]{Naskrecki_Acta}.
\end{proof}

\begin{remark}
We stress the fact that in general the curves $E_{f,g,h}$ and $E_{f',g',h'}$ for different parametrizations are not $F(t)$-isomorphic. This can be easily seen by comparing the $j$-invariants for both curves as a function of variable $t$.
\end{remark}

\medskip\noindent
We can keep track of the field of definition for the curve $E_{f,g,h}$. Suppose we let $F$ be a number field and assume $f,g,h$ all lie in $F[t]$ and that they determine a proper parametrization $\mathcal{P}_{f,g,h}$ of the conic $a^2+b^2=c^2$. We assume that the polynomials are coprime. We call a parametrization determined by polynomials $(t^2-1,2t,t^2+1)$ a \emph{standard parametrization}. It is easy to see that for any pair of coordinates $(x_{0},y_{0})\in F^2$ where $y_{0}\neq 0$ we put $t=\frac{x_{0}+1}{y_{0}}$ to recover the point. For the point $(-1,0)$ we put $t=0$ and for $(1,0)$ we put $t=1/s$ to change the coordinate chart and in the projective coordinates we take $s=0$. So every point of $\mathbb{P}^{1}(F)$ can be reached by this parametrization. We can assume without loss of generality that $\deg f=2$. If not, then $\deg g=2$ or otherwise this will imply $\deg h=1$ and the triple $(f,g,h)$ would not determine a proper parametrization of the conic.

\medskip\noindent
Then from the equation $f^2=h^2-g^2$ we can deduce that $h-g=s_{1}^{2}$ and $h+g=s_{2}^{2}$ where $f$ factors as $s_{1}s_{2}$. We must have $\deg s_{1}=\deg s_{2}=1$, otherwise the parametrization could not be proper. Assume $s_{1}=c(t-\alpha)$ and $s_{2}=d(t-\beta)$ for some $c,d,\alpha,\beta\in \bar{F}$. From the assumptions about $f,g,h$ we get that $cd\in F$. Because $h=(s_{1}^{2}+s_{2}^{2})/2\in F[t]$, $g=(s_{2}^{2}-s_{1}^{2})/2\in F[t]$, then $c^2,d^2\in F$ and $\alpha,\beta\in F$. We will show now that the parametrization $\mathcal{P}_{f,g,h}$ is related to the standard parametrization $\mathcal{P}_{t^2-1,2t,t^2+1}$ via a linear rational function with coefficients in $F$. This will imply that the groups $E_{f,g,h}(F(t))$ and $E_{t^2-1,2t,t^2+1}(F(t))$ are $F(t)$-isomorphic. 

\medskip\noindent
We have the equalities
\[\frac{f}{h}=\frac{2s_{1}s_{2}}{s_{1}^2+s_{2}^{2}}=\frac{2(\gamma t)}{1+(\gamma t)^2},\]
\[\frac{g}{h}=\frac{s_{2}^{2}-s_{1}^{2}}{s_{1}^2+s_{2}^{2}}=\frac{(\gamma t)^2-1}{1+(\gamma t)^2}\]
where $\gamma$ is a matrix from $\textrm{GL}_{2}(F)$
\[\gamma=\left(\begin{array}{cc}
   d/c & -(d/c)\beta\\
   1 & -\alpha
  \end{array}\right).\]
So the automorphism $t\mapsto \gamma t$ of $F(t)$ induces the isomorphism of the Mordell-Weil groups. The group $E_{t^2-1,2t,t^2+1}(F(t))$ can have rank $1$ or $2$ by \cite[Thm. 3.1]{NaskreckiMW}, and the result follows for any curve $E_{f,g,h}$ over $F(t)$ where $f^2+g^2=h^2$ determines a proper parametrization. 

\begin{remark}
This way of reasoning can be generalized to any conic $\alpha a^2+\beta b^2=\gamma c^2$ defined over $F$. We have to fix one parametrization of this conic over $F$ and then relate other $F(t)$-parametrizations to the fixed one. So in general we would also get a choice between rank $1$ or $2$.
\end{remark}

\begin{remark}
It is not always true that the equality $2\deg f=\deg (f^2-g^2)$ holds as the standard parametrization $(t^2-1,2t,t^2+1)$ might suggest. In fact we can have $\deg(f^2-g^2)<2\deg f$ and in the situation of our previous lemma this can only happen when $\deg f=\deg g=2$ (then we can only have $\deg(f^2-g^2)=3$). Consider the following example:
\begin{align*}
f & =\frac{t^2}{\sqrt{2}}-\frac{t}{\sqrt{2}}+i \left(\frac{t}{\sqrt{2}}-\frac{1}{2 \sqrt{2}}\right)+\frac{1}{2 \sqrt{2}}\\
g &=\frac{t^2}{\sqrt{2}}-\frac{t}{\sqrt{2}}+i \left(\frac{1}{2 \sqrt{2}}-\frac{t}{\sqrt{2}}\right)+\frac{1}{2 \sqrt{2}}\\
h &=t^2 - t\\
f^2-g^2 &=i \left(2 t^3-3 t^2+2 t-\frac{1}{2}\right)
\end{align*}
\end{remark}

\begin{remark}
We can always pull back an elliptic surface $(S,\mathbb{P}^{1},\pi)$ along a morphism $f:\mathbb{P}^{1}\rightarrow\mathbb{P}^1$ to obtain a new elliptic surface $(S',\mathbb{P}^1,\pi')$. Surface $S'$ is birational to the fiber product $S\times_{\phi}\mathbb{P}^{1}$. When we apply this construction to an automorphism $f\in\textrm{Aut}(\mathbb{P}^{1})$, surfaces $S$ and $S'$ are isomorphic. This implies that for different parametrizations $(f,g,h)$ and $(f',g',h')$ that both determine a proper parametrization of the Pythagorean conic, the surfaces attached to $E_{f,g,h}$ and $E_{f',g',h'}$ are in fact isomorphic. This isomorphism do not respect the elliptic fibrations. Nonetheless, the N\'{e}ron-Severi lattice is the same for both.
\end{remark}

\begin{remark}
From \cite[Prop. 11.14]{Shioda_Schutt} or \cite[Thm. 8.12]{Shioda_Mordell_Weil} we obtain the invariance of the height pairing under the automorphism $\sigma\in\textrm{Aut}(F(t))$. More precisely, for any $P,Q\in E(F(t))$ the intersection pairing $\langle P,Q\rangle_{E} $ equals $\langle P^{\sigma},Q^{\sigma}\rangle_{E^{\sigma}}$. In particular, this implies that the Mordell-Weil lattices on both curves are the same. 
\end{remark}

For polynomials $f,g,h$ that properly parametrize the conic $a^2+b^2=c^2$ the attached K3 surfaces corresponding to $E_{f,g,h}$ are isomorphic over $\overline{\mathbb{Q}}$ but not for any parametrization of $a^2+b^2=c^2$. In fact, we can easily produce an improper parametrization $(\frac{t^{2k}-1}{2},t^{k},\frac{t^{2k}+1}{2})$ for any $k\geq 2$. The elliptic surface attached to such a curve $E_{k}$ will have Euler characteristic equal to $2k$, so for different values of $k$ we will certainly obtain non-isomorphic elliptic surfaces. A quick computation reveals that the two linearly independent points in $E_{k}(\overline{\mathbb{Q}}(t))$ that we are able to produce might not give a complete list of free generators of the Mordell-Weil group. A numerical computation using the Nagao statistics, cf. \cite{SilvRosen} suggests that at least over $\mathbb{Q}(t)$ the Mordell-Weil rank should be again equal to $1$.
\begin{question}
Is it possible to determine the Mordell-Weil rank of the group $E_{k}(\mathbb{Q}(t))$ or $E_{k}(\overline{\mathbb{Q}}(t))$ when $k$ varies?
\end{question}
%\todo{Info about line parametrization fixed at a point $(x_{0},y_{0})$}

\section{Supersingular reduction}\label{sec:supersingular}
An elliptic curve of the form $E_{f,g,h}:y^2=x(x-f^2)(x-g^2)$ such that $f,g,h$ determine a proper parametrization $\mathcal{P}_{f,g,h}$ of the conic $a^2+b^2=c^2$ is a generic fiber of a K3-surface of a very special type. 
\begin{lemma}
Let $f,g,h\in \overline{\mathbb{Q}}[t]$ be polynomials that determine a proper parametrization of the conic $a^2+b^2=c^2$. Let $(\mathcal{E},\mathbb{P}^{1},\pi)$ be the Kodaira-N\'{e}ron model of the curve $E_{f,g,h}$. Then the triple $(\mathcal{E},\mathbb{P}^{1},\pi)$  is a singular elliptic K3-surface, i.e. its Picard number equals $20$. 
\end{lemma}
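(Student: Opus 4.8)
The plan is to reduce to the standard parametrization $(t^2-1,2t,t^2+1)$ and then read off the Picard number from the Shioda--Tate formula. By Corollary~\ref{cor:rank_comparison} there is a $\gamma\in\textrm{GL}_2(\overline{\mathbb{Q}})$ such that the automorphism $t\mapsto\gamma t$ of $\overline{\mathbb{Q}}(t)$ carries $E_{f,g,h}$ to a curve which is $\overline{\mathbb{Q}}(t)$-isomorphic to $E_{t^2-1,2t,t^2+1}$; pulling back the Kodaira--N\'eron model along $\gamma\in\textrm{Aut}(\mathbb{P}^1)$ then gives an isomorphism over $\overline{\mathbb{Q}}$ of the two elliptic surfaces --- not respecting the fibrations, but inducing an isometry of N\'eron--Severi lattices, as observed in Section~\ref{sec:case_1_1_1}. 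Hence $\rho(\mathcal{E})$ is the same for every proper parametrization, and it suffices to prove the statement for $E_{t^2-1,2t,t^2+1}$, which after translating the $x$-coordinate is the curve $E_{(1,1,1)}$ of \eqref{eq:standard_abc_family}.

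Next I would read off the singular fibres of $\mathcal{E}$ directly from \eqref{eq:general_discriminant}. Putting $\alpha=\beta=\gamma=1$ gives $\Delta_1=t^4-6t^2+1$, $\Delta_2=-4t^2$ and $\Delta_3=(t^2-1)^2$, so $\Delta_{(1,1,1)}=16\,\Delta_1^2\Delta_2^2\Delta_3^2$ vanishes to order $4$ at $t=0$ and at $t=\pm1$ and to order $2$ at each of the four roots of $\Delta_1$. At $t=0,\pm1$ two of the three $2$-torsion points of the fibre coincide and the Weierstrass cubic acquires a node, so these fibres are multiplicative of type $I_4$; at the roots of $\Delta_1$ the same collision occurs with discriminant order $2$, giving type $I_2$. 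A coordinate change $t\mapsto 1/s$, $(x,y)\mapsto(xs^4,ys^6)$ shows that $t=\infty$ is again a nodal fibre with discriminant order $4$, hence of type $I_4$. Summing Euler numbers via \cite[Thm.~1]{Oguiso_c2} and Table~\ref{tab:Euler_num} yields $12\chi(\mathcal{E})=4\cdot 4+4\cdot 2=24$, so $\chi(\mathcal{E})=2$, $\mathcal{E}$ is an elliptic K3 surface, and $\rho(\mathcal{E})\le 20$ by \eqref{eq:zero_char_bound}.

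Finally I would apply the Shioda--Tate formula:
\[\rho(\mathcal{E})=2+\sum_{v}(m_v-1)+\rank E_{f,g,h}(\overline{\mathbb{Q}}(t))=2+(4\cdot 3+4\cdot 1)+\rank E_{f,g,h}(\overline{\mathbb{Q}}(t))=18+\rank E_{f,g,h}(\overline{\mathbb{Q}}(t)),\]
since there are four fibres of type $I_4$ (with $m_v=4$) and four of type $I_2$ (with $m_v=2$). By Corollary~\ref{cor:rank_comparison} the Mordell--Weil rank over $\overline{\mathbb{Q}}(t)$ equals $2$, whence $\rho(\mathcal{E})=20$, i.e. $\mathcal{E}$ is a singular K3 surface.

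The step that needs the most care is the fibre analysis: one has to confirm that the model \eqref{eq:standard_abc_family} with $\alpha=\beta=\gamma=1$ is already minimal at $t=0,\pm1$ and at $t=\infty$, so that the colliding $2$-torsion points genuinely produce multiplicative fibres $I_4$ rather than additive ones, and that $\Delta_1=t^4-6t^2+1$ has four distinct roots, none of which is a root of $t(t^2-1)$. Granting this, the reduction to the standard triple, the Euler-number count and the Shioda--Tate computation are purely formal, the only substantive external input being the equality $\rank E_{t^2-1,2t,t^2+1}(\overline{\mathbb{Q}}(t))=2$ of \cite[Lemma 3.8]{Naskrecki_Acta}, used through Corollary~\ref{cor:rank_comparison}.
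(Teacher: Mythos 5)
Your proof is correct and takes essentially the same route as the paper: the paper's own argument likewise combines the explicit fibre configuration (four $I_4$ and four $I_2$ fibres, so $\chi=2$ and the surface is K3), the Mordell--Weil rank $2$ from Corollary \ref{cor:rank_comparison}, and the Shioda--Tate formula giving $\rho=2+16+2=20$. Your preliminary reduction to the standard triple $(t^2-1,2t,t^2+1)$ and the explicit minimality/nodal checks simply fill in details the paper leaves implicit (and are consistent with its remarks on isomorphism of the attached surfaces).
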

\begin{proof}
We have an explicit description of the Kodaira types corresponding to the singular fibers of $\pi$. Corollary \ref{cor:rank_comparison} implies that the rank of the Mordell-Weil group $E_{f,g,h}(\bar{\mathbb{Q}}(t))$ is two and the upper bound for the Picard number equal to $20$. Application of the Shioda-Tate formula allows us to conclude the statement. 
\end{proof}

Assume that $X$ is a K3 surface in characteristic $0$. We consider the situation when the rank of $NS(X)$ is maximal possible, equal to $20$. We denote by $d(X)$ the discriminant of the N\'{e}ron-Severi lattice. For elliptic K3 surfaces it can be computed if we have the information about the structure of the Mordell-Weil group of the generic fiber and about the fibration, cf. \cite{Shioda_Mordell_Weil}. We have an explicit formula, cf. \cite[\S 11.9]{Shioda_Schutt}
\[d(X)=(-1)^{\textrm{rank}(E_{\textrm{gen}}(K))}\textrm{disc}(\textrm{Triv}(X))\cdot \textrm{disc}(MWL(X))/(\#(E_{\textrm{gen}}(K))_{\textrm{tors}})^2\]
Field $K$ is the function field of $\mathbb{P}^{1}$ and $E_{gen}$ is the generic fiber of $X$. The lattice $\textrm{Triv}(X)$ is generated by the general smooth fiber $F$, image of the zero section $\overline{O}$ and the components of bad fibers that form root sublattices of standard types $A_{n}$, $D_{n}$ and $E_{n}$ with appropriate Dynkin diagrams. The lattice $MWL(X)=E_{gen}(K)/(E_{gen}(K))_{\textrm{tors}}$ with intersection pairing induced from the height pairing on $E_{gen}$. 

The Neron-Severi group $NS(X)$ embeds as a lattice into $H^{2}(X,\mathbb{Z})$ with its lattice structure inherited from the cup-product. The orthogonal complement of $NS(X)$ is called the transcendental lattice of $X$ and is denoted by $T(X)$. For K3 surfaces the second Betti number equals $22$, so for $X$ singular, this means that $\textrm{rank }T(X)=2$. It can be proved that $T(X)$ is an even lattice of discrminant $-d(X)$, cf. \cite{Shimada}.
Every singular K3-surface can be defined over a number field $F$ and we would like to consider the situation when $X$ can be reduced modulo a prime $p$. More precisely, we consider a non-empty open subset $U$ of $\textrm{Spec }\mathbb{Z}_{F}$, where $\mathbb{Z}_{F}$ is the ring of algebraic integers in $F$, and a smooth proper morphism $\mathcal{X}\rightarrow U$ with generic fiber isomorphic to $X$. We denote by $\pi_{F}$ the canonical morphism $\Spec\mathbb{Z}_{F}\rightarrow\Spec\mathbb{Z}$.  For a closed point $\mathfrak{p}$ in $U$ we denote by $X_{\mathfrak{p}}$ the fiber above $\mathfrak{p}$, which is a K3 surface defined over the residue field of $\mathfrak{p}$. In characteristic $p$ the rank of N\'{e}ron-Severi group can achieve rank $22$. Such a K3 surface is called \emph{supersingular}. We analyze the set $S_{p}(\mathcal{X})$ that contains primes $\mathfrak{p}$ above $p$ such that $X_{\mathfrak{p}}$ is supersingular. By the work of Shimada \cite{Shimada} we can now say when the reduction of the K3 surface would be supersingular. Let $\chi_{p}(x)$ denote the Legendre symbol $(x/p)$ for $p$ an odd prime. We have the following theorem
\begin{theorem}[\protect{\cite[Thm. 1]{Shimada}}]\label{theorem:Shimada_crit}
Let $p$ be a prime such that $p\nmid 2d(X)$. Then
\begin{itemize}
 \item if $\chi_{p}(x)=1$, then $S_{p}(\mathcal{X})=\emptyset$,
 \item if $\mathfrak{p}\in S_{p}(\mathcal{X})$, then $d(X_{\mathfrak{p}})=p^2$, i.e. the surface $X_{\mathfrak{p}}$ is of Artin invariant $1$.
\end{itemize}
Moreover, there exists a finite set $N$ of primes in $\mathbb{Z}$ that contains the prime divisors of $2d(X)$ such that for any $p\notin N$
\begin{equation}
S_{p}(\mathcal{X})=\left\{\begin{array}{ll}
                    \emptyset & \textrm{if }\chi_{p}(d(X))=1\\
                    \pi^{-1}(p) & \textrm{if }\chi_{p}(d(X))=-1\\
                   \end{array}\right.
\end{equation}
\end{theorem}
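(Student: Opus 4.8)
The plan is to deduce Shimada's criterion from the classical theory of complex multiplication of elliptic curves, by realising the singular K3 surface through a Shioda--Inose structure. Since $X$ is a singular K3 surface, its transcendental lattice $T(X)$ is a positive definite even lattice of rank $2$, and it determines the imaginary quadratic field $K=\mathbb{Q}(\sqrt{d(X)})$. By the Shioda--Inose theorem, $X$ admits a Shioda--Inose structure: there exist elliptic curves $E_{1},E_{2}$ with complex multiplication by orders in $K$, and, writing $Y=\mathrm{Km}(E_{1}\times E_{2})$ for the associated Kummer surface, dominant rational maps of degree $2$ in both directions between $X$ and $Y$ inducing a Hodge isometry $T(X)\cong T(Y)$. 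All of $X$, $Y$, $E_{1}$, $E_{2}$ and the connecting maps can be chosen to be defined over $F$, after enlarging $F$ if necessary. First I would fix such a model together with the integral model $\mathcal{X}\to U$ of the statement.

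Next I would spread the whole picture out over a non-empty open subset $U'\subseteq\mathrm{Spec}\,\mathbb{Z}_{F}$ and let $N$ be the finite set of rational primes that are divisors of $2d(X)$, lie below a prime of $\mathrm{Spec}\,\mathbb{Z}_{F}\setminus U'$, lie below a prime of bad reduction of $E_{1}$ or $E_{2}$, or are among the finitely many primes at which a chosen resolution of the Shioda--Inose correspondences has bad reduction. For $p\notin N$ and any $\mathfrak{p}\mid p$, the reduction $X_{\mathfrak{p}}$ then carries a Shioda--Inose structure with Kummer companion $Y_{\mathfrak{p}}=\mathrm{Km}(E_{1,\mathfrak{p}}\times E_{2,\mathfrak{p}})$. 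Since the connecting correspondence is an algebraic cycle of degree $2$ on $X_{\mathfrak{p}}\times Y_{\mathfrak{p}}$ and $p$ is odd, the maps it induces on $H^{2}_{\textrm{\'{e}t}}(-,\mathbb{Q}_{\ell})$ are inverse to each other up to the invertible scalar $2$ on the orthogonal complement of the algebraic part, so $\rho(X_{\mathfrak{p}})=22$ if and only if $\rho(Y_{\mathfrak{p}})=22$. Moreover, for $p>2$ the surface $\mathrm{Km}(E_{1,\mathfrak{p}}\times E_{2,\mathfrak{p}})$ is supersingular if and only if the abelian surface $E_{1,\mathfrak{p}}\times E_{2,\mathfrak{p}}$ is supersingular, that is, if and only if both $E_{1,\mathfrak{p}}$ and $E_{2,\mathfrak{p}}$ are supersingular elliptic curves.

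The dichotomy now follows from Deuring's reduction criterion: an elliptic curve over $F$ with complex multiplication by an order in $K$ has supersingular good reduction at $\mathfrak{p}\mid p$ exactly when $p$ is non-split in $K$, and since $p\nmid 2d(X)$ is unramified in $K$ this means precisely $\chi_{p}(d(X))=-1$. Hence if $\chi_{p}(d(X))=1$ then $E_{1,\mathfrak{p}}$ and $E_{2,\mathfrak{p}}$ are ordinary, $X_{\mathfrak{p}}$ is not supersingular, and $S_{p}(\mathcal{X})=\emptyset$; in fact this already holds for every $p\nmid 2d(X)$, because for split $p$ the rank $2$ complex-multiplication sub-motive of $H^{2}_{\textrm{\'{e}t}}(X_{\mathfrak{p}},\mathbb{Q}_{\ell})$ is ordinary, so $\rho(X_{\mathfrak{p}})<22$. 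If instead $\chi_{p}(d(X))=-1$, then for every $\mathfrak{p}\mid p$ both $E_{i,\mathfrak{p}}$ are supersingular, so $X_{\mathfrak{p}}$ is supersingular and $S_{p}(\mathcal{X})=\pi_{F}^{-1}(p)$. For the Artin invariant claim one combines the fact that the Kummer surface of a product of two supersingular elliptic curves over $\overline{\mathbb{F}}_{p}$ has Artin invariant $1$, the observation that the degree $2$ Shioda--Inose correspondence changes the discriminant of the N\'{e}ron--Severi lattice only by a power of $2$ and hence, $p$ being odd, preserves the Artin invariant, and the structure theory of supersingular K3 lattices due to Artin and Rudakov--Shafarevich; this yields $d(X_{\mathfrak{p}})=p^{2}$.

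The easy input is Deuring's criterion; the substance is geometric. The step I expect to be the main obstacle is controlling the reduction of the Shioda--Inose structure: one must show that the degree $2$ rational maps, and with them the isometry $T(X)\cong T(Y)$, survive reduction modulo all but finitely many primes, and that the associated cycle still induces an isomorphism on transcendental $\ell$-adic cohomology in characteristic $p$. A secondary technical point is pinning the Artin invariant down to exactly $1$, which can alternatively be handled $\ell$-adically: at an inert $\mathfrak{p}$ the Frobenius acts on the $2$-dimensional transcendental part of $H^{2}_{\textrm{\'{e}t}}(X_{\mathfrak{p}},\mathbb{Q}_{\ell})$ as the scalar $\pm p$, so exactly two new Galois-invariant classes appear and the $p$-adic valuation of the discriminant of $\NS(X_{\mathfrak{p}})$ increases by precisely $2$.
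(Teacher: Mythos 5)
First, a point of comparison: the paper does not prove this statement at all — it is quoted from Shimada \cite{Shimada} and used as a black box — so the only meaningful benchmark is Shimada's own argument. Your overall strategy (realise the singular K3 via a Shioda--Inose structure over a number field, reduce the CM elliptic curves attached to $K=\mathbb{Q}(\sqrt{d(X)})$, and apply Deuring's criterion together with the Kummer construction) is the standard route to the ``Moreover'' dichotomy, and, granting the spreading-out bookkeeping you yourself flag, it does give that dichotomy for all $p$ outside a suitable finite set $N$.

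The genuine gap is in the first two items, which the theorem asserts for \emph{every} prime $p\nmid 2d(X)$, not merely for $p\notin N$. Your geometric argument only controls primes at which the whole Shioda--Inose package (the curves $E_i$, the Kummer surface, the degree-two correspondences and their resolutions) has good reduction, and the two patches you offer for the remaining primes are assertions rather than proofs. (a) ``For split $p$ the rank-two CM sub-motive of $H^2_{\textrm{\'{e}t}}(X_{\mathfrak{p}})$ is ordinary'' presupposes that the transcendental Galois representation is induced from a Hecke character of $K$ that is unramified at $\mathfrak{p}$ and whose Frobenius eigenvalues there are not $p$ times roots of unity; justifying this exactly at the primes where your chosen correspondence degenerates is the hard point, and no argument is given. (b) The claim that at an inert prime ``exactly two new Galois-invariant classes appear and the $p$-adic valuation of the discriminant of $\NS(X_{\mathfrak{p}})$ increases by precisely $2$'' does not follow from eigenvalue counting: a Picard rank jump by $2$ says nothing by itself about discriminants. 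What is actually needed — and what Shimada's proof supplies — is the lattice-theoretic specialization argument: $\NS(X)$ embeds into $\NS(X_{\mathfrak{p}})$ with controlled (primitive, up to prime-to-$p$ index) cokernel, its orthogonal complement there has rank $2$, and by Artin and Rudakov--Shafarevich the discriminant group of a supersingular K3 lattice is $(\mathbb{Z}/p)^{2\sigma}$; since $p\nmid d(X)$ the $p$-part must be carried by the rank-two complement, forcing $2\sigma\leq 2$, i.e.\ Artin invariant $1$, and a comparison of discriminant forms at $p$ then forces $\chi_{p}(d(X))=-1$, which is the first item in contrapositive form. Without this (or an equally uniform CM/$\ell$-adic argument valid at every $\mathfrak{p}\in U$ with $p\nmid 2d(X)$), your proof only yields the theorem with ``$p\nmid 2d(X)$'' weakened to ``$p\notin N$'' in the first two items.
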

This theorem allows us to easily detect for which primes we can expect to have the supersingular reduction of the surface $S_{f,g,h}$ attached to the curve $E_{f,g,h}$. We need to compute the discriminant.

\begin{lemma}
Let $(f,g,h)$ be a triple of polynomials which parametrizes the conic $a^2+b^2=c^2$ in a proper way. The discriminant of the elliptic surface $S_{f,g,h}$ with generic fibre $E_{f,g,h}$ is equal to $-32$.
\end{lemma}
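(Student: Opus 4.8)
The plan is to reduce everything to the standard parametrization and then to plug the explicit geometry into the discriminant formula recalled just before the lemma, namely $d(X)=(-1)^{\rank(E_{\textrm{gen}}(K))}\,\textrm{disc}(\textrm{Triv}(X))\cdot\textrm{disc}(MWL(X))/(\#(E_{\textrm{gen}}(K))_{\textrm{tors}})^2$.

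First I would invoke the remarks in Section \ref{sec:case_1_1_1}: for any two proper parametrizations the associated elliptic surfaces are isomorphic over $\overline{\mathbb{Q}}$ and carry isometric N\'{e}ron--Severi lattices, so $d(S_{f,g,h})$ does not depend on the chosen triple. Hence I may compute it for the standard triple $(f,g,h)=(t^2-1,2t,t^2+1)$, i.e. for the surface with generic fibre $E:y^2=x(x-(t^2-1)^2)(x-4t^2)$, whose discriminant equals $2^8(t^2-1)^4t^4(t^4-6t^2+1)^2$. By Tate's algorithm (equivalently, by inspecting the valuations of the $j$-invariant) the singular fibres are of type $I_4$ over $t=0,1,-1,\infty$ and of type $I_2$ over the four roots of $t^4-6t^2+1$; the Euler numbers sum to $4\cdot 4+4\cdot 2=24$, confirming $\chi(S)=2$. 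Consequently $\textrm{Triv}(S_{f,g,h})\cong U\oplus A_3^{\oplus 4}\oplus A_1^{\oplus 4}$ (with the root lattices negative definite), so $\textrm{disc}(\textrm{Triv})=(-1)\cdot(-4)^4\cdot(-2)^4=-2^{12}$ and $\rank\textrm{Triv}=18$.

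It remains to determine the torsion subgroup and the Mordell--Weil lattice of the rank-$2$ group $E(\overline{\mathbb{Q}}(t))$ (the rank is $2$ by Corollary \ref{cor:rank_comparison}). Here I would observe that the section $(fg,\sqrt{-1}\,fg(f-g))$ lies on $E$ over $\overline{\mathbb{Q}}(t)$ and, by the duplication formula applied to $y^2=x(x-f^2)(x-g^2)$ together with $(fg)^2=f^2g^2$, satisfies $2\cdot(fg,\sqrt{-1}\,fg(f-g))=(0,0)$; a $2$-descent --- using that $t^4-6t^2+1$ is square-free, hence not a square in $\overline{\mathbb{Q}}(t)$ --- then shows that the full torsion is $\mathbb{Z}/4\mathbb{Z}\oplus\mathbb{Z}/2\mathbb{Z}$, so $\#(E(\overline{\mathbb{Q}}(t)))_{\textrm{tors}}=8$. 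Moreover the obvious infinite-order section $(h^2,fgh)$ has trivial image under the $2$-descent map (both $h^2$ and $h^2-f^2=g^2$ are squares), hence is $2$-divisible modulo torsion; producing a genuine basis of $E(\overline{\mathbb{Q}}(t))/\textrm{tors}$ and computing the height pairing via \eqref{equation:height_pairing_formula} gives a Gram matrix of determinant $\tfrac12$, i.e. $\textrm{disc}(MWL)=\tfrac12$. Substituting everything into the formula yields $d(S_{f,g,h})=(-1)^2\cdot(-2^{12})\cdot\tfrac12/8^2=-2^{12}/2^{7}=-2^5=-32$.

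The main obstacle is the Mordell--Weil step: the torsion is genuinely larger than the visible full $2$-torsion and the natural section is $2$-divisible, so one must exhibit actual free generators and track precisely which component of each $I_4$ fibre they meet in order to evaluate the local correction terms $c_v$. This last, somewhat delicate height computation is exactly the content of \cite{NaskreckiMW}, which I would quote rather than redo; everything else is routine bookkeeping with the fibre configuration and the lattice identity.
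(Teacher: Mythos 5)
Your proposal is correct and follows essentially the same route as the paper: the identity $d(X)=(-1)^{\operatorname{rank}}\operatorname{disc}(\operatorname{Triv})\cdot\operatorname{disc}(MWL)/(\#\operatorname{tors})^2$ with trivial lattice $U\oplus 4A_3\oplus 4A_1$ of discriminant $-2^{12}$, torsion of order $8$, and Mordell--Weil lattice of discriminant $1/2$, giving $-32$. The only cosmetic differences are that you reduce to the standard triple $(t^2-1,2t,t^2+1)$ via the lattice-invariance remarks and cite \cite{NaskreckiMW} for the free generators, whereas the paper treats an arbitrary proper triple by a short degree case analysis and writes the two generators $Q_1,Q_2$ and their height matrix explicitly.
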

\begin{proof}
Let $X$ denote the elliptic surface $S_{f,g,h}$. From the assumptions the polynomials $f$, $g$ and $f^2-g^2$ are separable. The trivial lattice in $\NS(X)$ contains $\deg f+\deg g$ copies of the root lattice $A_{3}$, $\deg(f^2-g^2)$ copies of the lattice $A_{1}$ and possibly a copy of $A_{n-1}$ lattice that corresponds to the fiber above $\infty$, where $n=8\deg f-4\deg g-2\deg(f^2-g^2)$. If $\deg g=\deg f=2$ and $\deg(f^2-g^2)=2\deg f$ there is no such lattice, namely $n=0$. In the situation $\deg g=1$ we have $n=4$ and for $\deg g=2$ and $\deg(f^2-g^2)=3$ we get $n=2$. In each case we obtain that the trivial lattice is a sum $U+4A_{3}+4A_{1}$ where $U=\textrm{span }(F,\overline{O})$.

Torsion subgroup $E_{f,g,h}(\bar{\mathbb{Q}}(t))$ was computed in \cite{Naskrecki_thesis} and is of order $8$. The remaining part is the Mordell-Weil lattice. It is of rank two with generators
\begin{align*}
Q_{1}=&(-(1+\sqrt{2})g(g-h),\sqrt{-1}(1+\sqrt{2})g(g-h)(\sqrt{2}g-h)),\\
Q_{2}=&((f-h)(g-h),(f+g)(f-h)(g-h))
\end{align*}
and height pairing matrix
\begin{equation}
\left(\begin{array}{cc}
 1/2 & 0\\
 0 & 1
\end{array}\right)
\end{equation}
The root lattices corresponding to different bad fibers are orthogonal to each other in $\NS(X)$ and we have the standard formula $\disc(A_{n})=(-1)^{n}(n+1)$. The image of the zero section and the general fiber span a lattice $U$ in $\NS(X)$ which is of discriminant $-1$. Hence we get $d(X)=-32$.
\end{proof}
For a model of $S_{f,g,h}$ with good reduction at a prime above $p$ it easy is to check whether the reduction will be supersingular. We assume that $p\neq 2$, because this is exactly the condition $p\nmid 2d(X)$ from Theorem \ref{theorem:Shimada_crit}.
\[\chi_{p}(x)=-1\Leftrightarrow \left(\frac{-32}{p}\right)=-1\Leftrightarrow p\equiv 5,7,13,15(\textrm{mod }16).\]
It is now convenient to switch to the standard parametrization $(t^2-1,2t,t^2+1)$ and its associated elliptic surface which is defined over $\mathbb{Q}$. The Weierstrass equation for this model has the form
\[y^2=x(x-(t^2-1)^2)(x-4t^2).\]
It is a globally minimal model and is defined over $\mathbb{Z}[t]$. This has the advantage that we can perform the Tate algorithm both in characteristic zero and in positive characteristic (at least equal to $5$). We can do the blow-ups simultaneously at least if the reduced equation behaves in a similar way as the equation in characteristic $0$, cf. \cite[Tw. 2.2.12]{Naskrecki_thesis}. 

\begin{proposition}
The surface $S_{t^2-1,2t,t^2+1}$ has good reduction for primes $p\geq 5$.
\end{proposition}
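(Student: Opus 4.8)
The plan is to read off a globally minimal integral model, apply Lemma~\ref{lem:good_reduction} for $p\geq 7$, and treat the single prime $p=5$ — which falls outside the range of that lemma — by the same simultaneous blow‑up argument carried out by hand. First I would rewrite the displayed equation, using $(t^2-1)^2+4t^2=(t^2+1)^2$, as
\[y^2=x^3-(t^2+1)^2x^2+4t^2(t^2-1)^2x,\]
which already has coefficients in $\mathbb{Z}[t]$, with $a_1=a_3=a_6=0$, $\rad(a_2)=t^2+1$, $\rad(a_4)=t(t^2-1)$. One computes $\Delta=256\,t^4(t^2-1)^4(t^4-6t^2+1)^2$, so $\rad(\Delta)=t(t^2-1)(t^4-6t^2+1)$, and $c_4=16\,(t^8-8t^6+30t^4-8t^2+1)$, whence $j=f/g$ in lowest terms with $f=16\,(t^8-8t^6+30t^4-8t^2+1)^3$ and $g=t^4(t^2-1)^4(t^4-6t^2+1)^2$ (the octic is coprime to $\rad(\Delta)$: after $u=t^2$ it reduces modulo $u^2-6u+1$ to $16u^2$, a unit there, and it does not vanish at $0,\pm1$). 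Since $v(c_4)=0$ at every zero of $\Delta$, all bad fibres are of type $I_n$ and the model is globally minimal, including at $\infty$ via $t\mapsto 1/s$, $(x,y)\mapsto(x/s^4,y/s^6)$, where the equation turns into $y^2=x\bigl(x-(1-s^2)^2\bigr)(x-4s^2)$, an $I_4$ fibre.

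Next I would collect the discriminants of the polynomials entering Lemma~\ref{lem:good_reduction} and observe that each is supported only at the primes $2$ and $3$: one has $\disc(t^2+1)=-4$, $\disc(t(t^2-1))=4$, $\disc(t^4-6t^2+1)=2^{14}$, and, by the product‑of‑resultants formula, $\disc(\rad(\Delta))=\pm 2^{24}$, while $\disc(\rad(f))$ is built from the palindromic structure of the octic (reducing essentially to $\disc(w^2-8w+28)=-48$ together with the transform $u=t^2$) and is supported at $\{2,3\}$. As $256$ and the leading coefficients of the factors of $\Delta$ are units for every $p\geq 5$, the reduced equation $\widetilde E$ still defines an elliptic curve over $\mathbb{F}_p(t)$, it stays globally minimal (still only $I_n$ fibres, with $c_4$ non‑vanishing on $\rad(\Delta)$ because $\mathrm{Res}(\rad(\Delta),c_4)$ is again a power of $2$, by the congruence $c_4\equiv 16u^2$ above and the values $1,16$ of the octic at $0,\pm1$), and the reductions of $\rad(f),\rad(g),\rad(\Delta)$ remain separable. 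For $p\geq 7$ Lemma~\ref{lem:good_reduction} then gives that $S_{t^2-1,2t,t^2+1}$ has good reduction at $p$.

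It remains to handle $p=5$, reproducing the proof of Lemma~\ref{lem:good_reduction}: the only role of the hypothesis $p>5$ there is to guarantee that the blow‑ups of Tate's algorithm proceed exactly as in characteristic $0$, and for our curve — which has fibres of multiplicative type only, detected purely by $v(\Delta)$ and $v(c_4)=0$ — this is already ensured in characteristic $5$ by the non‑vanishing statements of the previous paragraph. Concretely, over $\mathbb{F}_5(t)$ the radical $t(t-1)(t+1)(t^4-6t^2+1)$ stays separable (its discriminant being a unit mod $5$), so the finite bad fibres are three of type $I_4$ (at $t=0,\pm1$) and four of type $I_2$ (at the roots of $t^4-6t^2+1$, which remain simple and distinct from $0,\pm1$); together with the $I_4$ fibre at $t=\infty$ obtained from the coordinate change above, one gets $\sum_v e(F_v)=4\cdot4+4\cdot2=24$, unchanged, hence $\chi(\mathcal{E}_{\mathbb{F}_5})=2$ and the reduction at $5$ is good.

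I expect the main obstacle to be the bookkeeping behind the last two steps: verifying explicitly (e.g.\ with \cite{MAGMA}) that all discriminants and resultants occurring in Lemma~\ref{lem:good_reduction} are supported only at $\{2,3\}$, so that nothing degenerates modulo $5$, and separately controlling the fibre at infinity, which is invisible to the finite‑place analysis and must be examined through the change of coordinates $t\mapsto 1/s$.
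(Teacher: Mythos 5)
Your proof is correct and follows essentially the same route as the paper: verify that all discriminants (of the radicals of the coefficients, of $\rad(\Delta)$, and of the numerator and denominator of $j$) and the relevant resultants are supported only at $\{2,3\}$, check the model at infinity via $t\mapsto 1/s$, and conclude by running Tate's algorithm simultaneously in characteristic $0$ and characteristic $p$ (the content of Lemma~\ref{lem:good_reduction}). The only difference is that you make the computations of $\Delta$, $c_4$, the fibre configuration and the discriminants explicit, and you treat $p=5$ separately because the lemma is stated for $p>5$ — a point the paper glosses over but which your multiplicative-fibre argument handles correctly.
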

\begin{proof}
We check that after modulo $p$ reduction the radicals of polynomials $a_{i}(t)$ and of discriminant of the Weierstrass equation, and the numerator and denominator of $j$-invariant remain separable (and that they do not have a common root modulo $p$). We also check this for the model at infinity. The support of the discriminants of all computed polynomials is contained in the set $\{2,3\}$. So we can perform the Tate algorithm in characteristic $p$ and in characteristic zero, and we will get the same reduction types, which implies that in fact we have a good reduction modulo $p$.
\end{proof}

The results lead to the following corollary.
\begin{corollary}
The surface $S_{t^2-1,2t,t^2+1}$ has good supersingular reduction at primes $p\geq 5$ such that $p\equiv 5,7,13,15(\textrm{mod }16)$. The discriminant of the N\'{e}ron-Severi group is equal to $p^2$. Generic fiber is an elliptic curve over $\mathbb{F}_{p}(t)$ with geometric Mordell-Weil rank $4$.
\end{corollary}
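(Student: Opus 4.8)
The plan is to assemble the three results established immediately above: the preceding Proposition (the surface $X=S_{t^2-1,2t,t^2+1}$, with globally minimal model $y^2=x(x-(t^2-1)^2)(x-4t^2)$, has good reduction at every prime $p\geq 5$), the preceding Lemma ($d(X)=-32$), and Shimada's criterion, Theorem \ref{theorem:Shimada_crit}. Since the model has coefficients in $\mathbb{Z}[t]$, the surface is defined over $F=\mathbb{Q}$; hence $\mathbb{Z}_F=\mathbb{Z}$, the fibre $\pi_F^{-1}(p)=\{p\}$ is a single point, and the reduction $X_p$ of the smooth model at any $p\geq 5$ is a K3 surface already defined over $\mathbb{F}_p$.

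First I would recall the elementary identity already recorded in the text: $\chi_p(d(X))=\left(\frac{-32}{p}\right)=\left(\frac{-2}{p}\right)$, which equals $-1$ exactly when $p\equiv 5,7\pmod 8$, i.e. when $p\equiv 5,7,13,15\pmod{16}$. Because the preceding Proposition gives good reduction of $X$ at every prime $\geq 5$, the finite exceptional set $N$ appearing in Theorem \ref{theorem:Shimada_crit} --- which contains the prime divisors of $2d(X)=-64$ and, by inspection of Shimada's argument, may be taken to consist of these together with the bad primes of the model --- is contained in $\{2,3\}$. Therefore, for every $p\geq 5$ with $p\equiv 5,7,13,15\pmod{16}$ we have $p\notin N$ and $\chi_p(d(X))=-1$, and Theorem \ref{theorem:Shimada_crit} gives $S_p(\mathcal{X})=\pi_F^{-1}(p)=\{p\}$; that is, $X_p$ is a supersingular K3 surface, and by the first part of the same theorem $d(X_p)=p^2$, so $X_p$ has Artin invariant $1$. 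I expect this to be the only genuinely delicate point: one must be sure no prime $\geq 5$ is swallowed by Shimada's set $N$, which is exactly what the good-reduction Proposition secures, and one must keep the base field $F=\mathbb{Q}$ so that non-emptiness of $S_p(\mathcal{X})$ means the unique prime above $p$ is supersingular.

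It remains to read off the geometric Mordell--Weil rank. The good reduction is type-preserving: the simultaneous Tate-algorithm computation in the proof of the preceding Proposition shows that modulo $p$ the singular fibres of $\pi$ retain their Kodaira types, so the trivial sublattice of $\NS(X_{\overline{\mathbb{F}}_p})$ is again $U\oplus 4A_3\oplus 4A_1$ and $\sum_{v}(m_v-1)=4\cdot 3+4\cdot 1=16$. Supersingularity gives $\rho(X_{\overline{\mathbb{F}}_p})=22$, so the Shioda--Tate formula forces
\[\rank E_{t^2-1,2t,t^2+1}(\overline{\mathbb{F}}_p(t))=\rho(X_{\overline{\mathbb{F}}_p})-2-\sum_v(m_v-1)=22-2-16=4,\]
which is the last assertion; the same bookkeeping with $\rho=20$ recovers the geometric rank $2$ over $\overline{\mathbb{Q}}(t)$ found earlier. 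Apart from this Legendre-symbol identity and the trivial-lattice count, no further computation is needed, so the single obstacle is the careful invocation of Shimada's theorem described above.
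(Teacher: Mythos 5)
Your Legendre-symbol computation, the use of the good-reduction proposition, and the final Shioda--Tate bookkeeping (trivial lattice $U\oplus 4A_{3}\oplus 4A_{1}$, $\rho=22$, hence rank $22-2-16=4$) all match the paper's argument, and the Artin-invariant-$1$ statement correctly comes from the first part of Theorem \ref{theorem:Shimada_crit}. The problem is the step you yourself flag as the delicate one. Theorem \ref{theorem:Shimada_crit}, as stated, only asserts the \emph{existence} of a finite exceptional set $N$ containing the prime divisors of $2d(X)$; it gives no effective description of $N$. Your assertion that ``by inspection of Shimada's argument, $N$ may be taken to consist of these together with the bad primes of the model'' is exactly the content that has to be proved, and you do not prove it. Without pinning $N$ down, the theorem only yields supersingular reduction at all but finitely many primes with $\chi_p(d(X))=-1$, which is strictly weaker than the corollary's claim that it holds at \emph{every} prime $p\geq 5$ with $p\equiv 5,7,13,15\ (\mathrm{mod}\ 16)$.

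The paper closes this gap differently: it invokes the Elkies--Sch\"utt results to identify the Frobenius action on the transcendental lattice $T(X)$ with the CM newform of level $8$ (the entry in Sch\"utt's table of CM forms for $\mathbb{Q}(\sqrt{-2})$), and it is this identification that yields $N=\{2,3\}$, i.e.\ that for every good prime $p\geq 5$ supersingularity is equivalent to $\chi_p(-2)=-1$. Your claimed description of $N$ is plausibly true, but establishing it amounts to the same modularity input (unramifiedness of the representation on $T(X)$ at good primes, control of the level of the associated CM form, and the passage from Frobenius eigenvalues being $p$ times roots of unity to $\rho=22$ via Artin--Swinnerton-Dyer/Tate), none of which is supplied in your proposal. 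So as written there is a genuine gap at the corollary's central point; either reproduce the paper's CM-form identification or give an actual argument, inside Shimada's proof or via modularity, that no prime $p\geq 5$ lies in $N$.
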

\begin{proof}
By \cite[Thm.3,4]{Schutt_Elkies} we can identify the action of Frobenius automorphism acting on the transcendental lattice $T(X)$. We find that in our situation we have the CM-form corresponding to level $8$ in \cite[Tab.1]{Schutt_CM}. This implies the equality $N=\{2,3\}$ where $N$ is the set from Theorem \ref{theorem:Shimada_crit}. Now we prove the last statement. Because of the good reduction situation we obtain exactly the same fiber types of bad reduction. For the supersingular case the Picard number equals $22$, so the application of Shioda-Tate formula leads to the conclusion that the Mordell-Weil rank of $E_{t^2-1,2t,t^2+1}(\overline{\mathbb{F}}_{p}(t))$ equals $4$.
\end{proof}

\begin{remark}
Supersingular K3 surface of Artin invariant $1$ is unique up to isomorphism and very special in another way that the generators of the N\'{e}ron-Severi group are defined over $\mathbb{F}_{p^{2}}$. So we can even say that $E_{t^2-1,2t,t^2+1}(\overline{\mathbb{F}}_{p}(t))=E_{t^2-1,2t,t^2+1}(\mathbb{F}_{p^2}(t))$.
\end{remark}

\begin{example}
We can produce explicit basis of the Mordell-Weil group for a few small primes. In fact, the theorem implies that the height of the extra points that add rank two to the Mordell-Weil group will grow unbounded and this means that the computations for sufficiently large primes require sieving over many rational functions with both denominator and numerator of very high degree. This computations become quickly infeasible for a typical computer.

In our computations we have exploited the fact that $E_{t^2-1,2t,t^2+1}[2]$ is contained in the $\mathbb{F}_{p}(t)$-rational points subgroup. Under this assumption we can apply a full $2$-descent described in \cite[Chap. 10]{Silverman_arithmetic}. The torsion subgroup of $E_{t^2-1,2t,t^2+1}(\mathbb{F}_{p^2}(t))$ is always isomorphic to $\mathbb{Z}/2\oplus\mathbb{Z}/4$. This is a consequence of \cite[Cor. 5.4]{NaskreckiMW} applied in positive characteristic. The crucial step is to prove that not all two-torsion points are divisible by $2$. It suffices to prove that the polynomial $f^2-g^2=t^4-6t^2+1$ is separable over $\mathbb{F}_{p^2}[t]$, which is always the case for primes $p\geq 3$.

The free part of the Mordell-Weil group will always contain the reductions of points $Q_{1}$ and $Q_{2}$, so below we only present the other two generators and compute the height pairing matrix. The trivial lattice $\textrm{Triv}(S_{t^2-1,2t,t^2+1})$ has discriminant $2^{12}$ and the torsion subgroup has order $8$. This implies that if we provide two points $Q_{3},Q_{4}$ such that the height pairing matrix of the tuple $(Q_{1},Q_{2},Q_{3},Q_{4})$ has determinant $p^2/2^6$, then the points generate the free part of the Mordell-Weil group.

\medskip\noindent
\textbf{Case $p=5$:}
We realize $\mathbb{F}_{5^2}$ as $\mathbb{F}_{5}[s]/(s^2+4s+2)$.
%\begin{equation*}
%Q_{3}=\left(\frac{s^{21}t(t+1)^2(t+4)}{(t+s^7)^2},\frac{s^8 t(t+1)^2(t+s^2)(t+s^{10})(t+4)(t+s^{14})}{(t+s^7)^3}\right)
%\end{equation*}
\begin{equation*}
Q_{3}=(s^3 t (t+1) (t+s^{22}), s^{10} t (t+1)(t+s^3)(t+s^{14})(t+s^{16})(t+s^{22}))
\end{equation*}
\begin{equation*}
Q_{4}=(t^4 + 4 t^2, t^5 + 4 t^3)
\end{equation*}
Height pairing matrix with determinant $5^2/2^6$.
\begin{equation*}
\left(\begin{array}{cccc}
 1/2 &   0 & 1/4 &   0\\
   0 &   1 &   0 & 1/2\\
 1/4 &   0 &   2 & -5/4\\
   0 & 1/2 & -5/4 & 3/2
\end{array}\right)
\end{equation*}
\medskip\noindent
\textbf{Case $p=7$:}
Let us assume that $\mathbb{F}_{7^2}=\mathbb{F}_{7}[s]/(s^2-3)$.
\begin{equation*}
Q_{3}=(t^2+t,s t(1+t)(2+t)^2)
\end{equation*}
\begin{equation*}
Q_{4}=(1,2 t (3 + t) (4 + t))
\end{equation*}
Height pairing matrix with determinant $7^2/2^6$.
\begin{equation*}
\left(\begin{array}{cccc}
 1/2 &   0 & -1/4 &   0\\
   0 &   1 &   0  & 1/2\\
-1/4 &   0 &   1  &  0\\
   0 & 1/2 &   0  &  2
\end{array}\right)
\end{equation*}

\medskip\noindent
\textbf{Case $p=13$:}
We realize $\mathbb{F}_{13^2}$ as $\mathbb{F}_{13}[s]/(s^2+12s+2)$.
\begin{equation*}
Q_{3}=(s^5 (t+s^{82})^2(t+12), s^{47} (t+s^4)(t+s^{18})(t+s^{82})(t+12)(t+s^{115}))
\end{equation*}
\begin{equation*}
Q_{4}=\left(\frac{11 t (t+2)^2 (t+6)^2}{(t+5)^2},\frac{3 t (t+2) (t+6) (t+7) (t+8) (t+11) \left(t^2+2 t+12\right)}{(t+5)^3}\right)
\end{equation*}
Height pairing matrix with determinant $13^2/2^6$.
\begin{equation*}
\left(\begin{array}{cccc}
1/2 &  0 & 1/4 &  0\\
  0 &  1 & 1/2 & 1/2\\
1/4 & 1/2 &  2 & 1/4\\
  0 & 1/2 & 1/4 & 7/2
\end{array}\right)
\end{equation*}
\end{example}

\begin{example}
The curve $y^2=(x-(t^2-1)^2)(x-4t^2)(x+(t^2+1)^2)$ that we considered before also determines a singular K3 surface. By the discriminant computation we checked that it is equal to $-2^5\cdot 3$, hence we obtain supersingular reduction at primes $p$ such that $\left(\frac{-2^5\cdot 3}{p}\right)=-1$, so $p= 13, 17, 19, 23, 37, 41, 43, 47, 61, 67, 71, \ldots$. We check that the attached CM-form attached to the transcendental lattice by \cite{Schutt_Elkies} is of level $N=24$, cf \cite[Tab. 1]{Schutt_CM}. For each supersingular prime we will obtain rank $8$ over $\mathbb{F}_{p^2}$. 
\end{example}

\section{Remarks}\label{sec:remarks}
Below we discuss several aspects of the general family \eqref{eq:general_family} that were not discussed elsewhere. We deal mainly with the family of type $\alpha=\beta=\gamma=1$ and for the other cases we can perform a similar study.
\subsection{Lower bounds over Q} %1.5 strony
We proved in \cite{NaskreckiMW} what are the lower bounds for the Mordell-Weil rank of specialization in our family. This theorem relies on the Silverman's specialization result, cf. \cite[III \S 11, Thm. 11.4]{Silverman_book}. Silverman's theorem allows us only to say that for all but finitely many elements in the number field $F$ over which the curve \eqref{eq:general_family} is defined, the specialization homomorphism will be injective. We will discuss below an approach to this problem, for number fields with class number one, that allows us to produce an infinite and explicit set of specialization for which the specialization homomorphism is injective.

\medskip\noindent
To simplify the exposition we will discuss only the specialization of curves $E_{f,g,h}$ for $f,g,h\in\mathbb{Q}[t]$ which parametrize the conic $a^2+b^2=c^2$. The tool we want to use is the theorem from \cite{Tadic}.

\begin{theorem}[\protect{\cite[Thm. 1.1]{Tadic}}]
Let $E$ be a nonconstant elliptic curve over $\mathbb{Q}(t)$ given by the equation
\[E=E(t): y^2=(x-e_{1})(x-e_{2})(x-e_{3})\quad (e_{1},e_{2},e_{3}\in\mathbb{Z}[t]).\]
Assume that $t_{0}\in\mathbb{Q}$ satisfies the following condition.

\begin{center}
(*) For every nonconstant square-free divisor $h$ in $\mathbb{Z}[t]$ of $(e_{1}-e_{2})(e_{1}-e_{3})$ or $(e_{2}-e_{1})(e_{2}-e_{3})$ or $(e_{3}-e_{1})(e_{3}-e_{1})$, the rational number $h(t_{0})$\\ is not a square in $\mathbb{Q}$.
\end{center}
Then the specialization homomorphism $\textrm{sp}_{t_{0}}:E(\mathbb{Q}(t))\rightarrow E(t_{0})(\mathbb{Q})$ is injective.
\end{theorem}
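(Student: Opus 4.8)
The plan is to reduce the statement to three ingredients — injectivity of $\mathrm{sp}_{t_0}$ on torsion, an isomorphism $E[2]\xrightarrow{\sim}E(t_0)[2]$, and injectivity of the induced map $E(\mathbb{Q}(t))/2E(\mathbb{Q}(t))\to E(t_0)(\mathbb{Q})/2E(t_0)(\mathbb{Q})$, the last being where condition (*) does its work — and then to assemble them by an elementary group-theoretic step. First I would note that (*) already forces $e_i(t_0)\neq e_j(t_0)$ for $i\neq j$: since $0$ is a square in $\mathbb{Q}$, a vanishing $(e_i-e_j)(t_0)$ would make $h(t_0)=0$ a square for $h=\operatorname{rad}(e_i-e_j)$, a nonconstant squarefree divisor of $(e_i-e_j)(e_i-e_k)$ (the case of a constant $e_i-e_j$ being trivial), contradicting (*). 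Hence $t_0$ is not a root of $\disc(E)=16\prod_{i<j}(e_i-e_j)^2$, so the fibre $E(t_0)$ is a smooth elliptic curve, $\mathrm{sp}_{t_0}\colon E(\mathbb{Q}(t))\to E(t_0)(\mathbb{Q})$ is a bona fide group homomorphism, it is injective on the finite group $E(\mathbb{Q}(t))_{\mathrm{tors}}$ (the torsion sections of the Kodaira--N\'{e}ron model are pairwise disjoint over the smooth fibre at $t_0$), and it carries the three nonzero points $(e_i,0)\in E[2]$ to the three distinct nonzero points $(e_i(t_0),0)\in E(t_0)[2]$; as $E[2]$ and $E(t_0)[2]$ are both $\mathbb{Q}$-rational of order $4$, $\mathrm{sp}_{t_0}$ restricts to an isomorphism $E[2]\xrightarrow{\sim}E(t_0)[2]$.

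For the map on $E/2E$ I would invoke the classical $2$-descent homomorphism $\mu\colon E(\mathbb{Q}(t))/2E(\mathbb{Q}(t))\hookrightarrow\bigl(\mathbb{Q}(t)^{\times}/(\mathbb{Q}(t)^{\times})^2\bigr)^2$, $(x,y)\mapsto(x-e_1,\,x-e_2)$ on non-$2$-torsion points (with the usual correction on $E[2]$), and likewise $\mu_{t_0}$ over $\mathbb{Q}$ for $E(t_0)$. Two facts are needed. (a) \emph{Support of the image:} writing a point as $x=a/c^2$, $y=b/c^3$ with coprime $a,c\in\mathbb{Q}[t]$, one has $x-e_i\equiv a-e_ic^2\pmod{(\mathbb{Q}(t)^{\times})^2}$ and $\prod_i(a-e_ic^2)=b^2$; a valuation count then shows that every irreducible factor of the squarefree part of $a-e_1c^2$ divides $(e_1-e_2)(e_1-e_3)$ and is coprime to $c$, so up to a rational constant the first coordinate of $\mu(P)$ is a squarefree divisor of $(e_1-e_2)(e_1-e_3)$, and symmetrically for the other coordinates. (b) \emph{Specialization compatibility:} $\mu_{t_0}\circ\mathrm{sp}_{t_0}=\mathrm{ev}_{t_0}\circ\mu$, where $\mathrm{ev}_{t_0}$ evaluates a square class on a representative regular and nonzero at $t_0$; the delicate case $\mathrm{sp}_{t_0}(P)=O$ I would handle with the formal parameter $s=-x/y$, using that $s^2x\to1$ and $e_i/x\to0$ as $t\to t_0$, so $(x-e_i)s^2$ is regular with value $1$ at $t_0$ and hence $\mathrm{ev}_{t_0}(\mu(P))$ is trivial, matching $\mu_{t_0}(O)$.

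Granting (a) and (b), suppose $P\in\ker(\mathrm{sp}_{t_0})$ has infinite order and $P\notin2E(\mathbb{Q}(t))$. Then $\mu(P)\neq1$, yet (b) forces $\mathrm{ev}_{t_0}(\mu(P))=1$; combining this with (a) (and the bookkeeping for the rational constants) yields a nonconstant squarefree divisor $h$ of one of $(e_1-e_2)(e_1-e_3)$, $(e_2-e_1)(e_2-e_3)$, $(e_3-e_1)(e_3-e_2)$ with $h(t_0)$ a square, contradicting (*). Hence every infinite-order element of $\ker(\mathrm{sp}_{t_0})$ lies in $2E(\mathbb{Q}(t))$, while $\ker(\mathrm{sp}_{t_0})$ is torsion-free because $\mathrm{sp}_{t_0}$ is injective on torsion. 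Now for $k\in\ker(\mathrm{sp}_{t_0})$, $k\neq O$, write $k=2P$; then $2\,\mathrm{sp}_{t_0}(P)=O$, so $\mathrm{sp}_{t_0}(P)\in E(t_0)(\mathbb{Q})[2]=\mathrm{sp}_{t_0}(E[2])$ by the isomorphism of the first step, whence $P-T\in\ker(\mathrm{sp}_{t_0})$ for some $T\in E[2]$ and $k=2(P-T)\in2\ker(\mathrm{sp}_{t_0})$. Therefore $\ker(\mathrm{sp}_{t_0})=2\ker(\mathrm{sp}_{t_0})$; being finitely generated (Mordell--Weil over the function field) and torsion-free, it is trivial, which is the assertion.

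The hard part is establishing (a) and (b) cleanly. In (a) the support computation must be pushed far enough to control the rational constant appearing in each coordinate of $\mu(P)$ — via contents together with the relation $\prod_i(a-e_ic^2)=b^2$ — so that condition (*), which only constrains \emph{nonconstant} divisors, genuinely suffices to reach the contradiction. In (b) the compatibility has to be verified precisely at the place $t_0$ where the section attached to $P$ may run into the zero section, which is exactly the degenerate configuration one is trying to rule out; the formal-parameter computation sketched above is the mechanism, but it needs to be done with care about what ``evaluation of a square class'' means when naive representatives acquire poles. Everything downstream of these two facts is purely formal.
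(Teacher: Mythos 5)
The paper itself does not prove this statement --- it is quoted (up to the typo $(e_3-e_1)(e_3-e_1)$ for $(e_3-e_1)(e_3-e_2)$) from the cited work of Gusi\'c--Tadi\'c, so there is no internal proof to compare against; your outline in fact follows the same route as that source: complete $2$-descent over $\mathbb{Q}(t)$, injectivity of $\mathrm{sp}_{t_0}$ on torsion, the $2$-torsion isomorphism, and the divisibility bootstrap $\ker=2\ker$ in a finitely generated torsion-free group. Your architecture is correct, and the two points you flag as ``the hard part'' do close. For (a), write $x-e_i=q_i\,h_i\,s_i^2$ with $h_i\in\mathbb{Z}[t]$ primitive squarefree and $q_i\in\mathbb{Z}$ squarefree: besides the place-by-place argument giving $h_i\mid$ the primitive part of $(e_i-e_j)(e_i-e_k)$, the Gauss ($p$-adic) valuations on $\mathbb{Q}(t)$ control the constant: if $v_p(x)<0$ all three factors $x-e_1,x-e_2,x-e_3$ have the same, necessarily even, valuation, and otherwise an odd valuation of $x-e_i$ forces another factor to have odd positive valuation, so $p$ divides the content of $e_i-e_j$; hence every prime of $q_i$ divides the content of $(e_i-e_j)(e_i-e_k)$ and $q_ih_i$ is genuinely a squarefree divisor of $(e_i-e_j)(e_i-e_k)$ in $\mathbb{Z}[t]$, which is exactly what (*) quantifies over (and if both polynomial parts $h_i$ are constant, triviality of the evaluated classes forces $\mu(P)=1$ directly, with no appeal to (*)). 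For (b), note that (*) also guarantees $h_i(t_0)\neq 0$, since $0$ is a square, so the representative $q_ih_i$ is a unit at the place $t-t_0$ and ``evaluation of a square class'' is the usual residue of a unit representative; combined with your $s=-x/y$ expansion in the case $\mathrm{sp}_{t_0}(P)=O$, this is all that is needed, so the proposal is a faithful, essentially complete sketch of the cited proof.
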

In our situation, let $e_{1}=0, e_{2}=(t^2-1)^2$ and $e_{3}=4t^2$. It is easy to compute the set of rational numbers that satisfies property $(*)$. We obtain the set that contains $35$ polynomials of degree at most $6$. Quick computation reveals that for all rational numbers $t_{0}$ with naive height smaller than $500$ (there are $304463
$ such rational numbers) about $97.5\%$ of this numbers satisfy condition $(*)$. The set of elements of $\mathbb{Q}$ of naive height at most equal to $10$ that satisfy condition (*) is
\begin{equation*}
\begin{split}
\{ -6, -10/3, -8/3, -7/4, -8/5, -10/7, -7/5, -7/6, -6/7, -5/7, -7/10,\\ -5/8, -4/7, -3/8, -3/10, -1/6, 1/6, 3/10, 3/8, 4/7, 5/8, 7/10, 5/7, 6/7,\\ 7/6, 7/5, 10/7, 8/5, 7/4, 8/3, 10/3, 6 \}.
\end{split}
\end{equation*}

Elements of $\mathbb{Q}$ that do not satisfy condition $(*)$ can still produce an injective specialization homomorphism or at least preserve the rank bound. In this case the rank of $E_{t^2-1,2t,t^2+1}(\mathbb{Q}(t))$ equals one and by a direct computation we have checked that for all $t_{0}$ for which the specialized curve is nonsingular, the rank was at least one for all $t_{0}$ with naive height at most $400$.
\subsection{Polynomial solutions}\label{subsec:uniform_a_b_c_model_par}
For any curve 
\[E_{\alpha,\beta,\gamma}:y^2=x(x-\alpha a^2)(x-\beta b^2),\quad \alpha a^2+\beta b^2=\gamma c^2\]
and a fixed conic $C:q(a,b,c)=0$ we can ask for the description of the $K(C)$-points of $E_{\alpha,\beta,\gamma}$. In fact, the curve $E_{\alpha,\beta,\gamma}$ is not well-defined over $K(C)$, so we slightly change the model to 
\[y^2=x(x-1)(x-\beta b^2/(\alpha a^2)).\]
By abuse of notation we will denote this curve again by $E_{\alpha,\beta,\gamma}$. The curve written this way is an elliptic curve defined over $K(C)$. We ask now for the description of $K(C)$ points on this curve, written explicitly in terms of homogeneous variables $a,b,c$. We will carefully analyze only the simplest case when $q(a,b,c)=a^2+b^2-c^2$ is the equation that defines Pythagorean triples. Function field $K(C)$ is isomorphic to $K(\mathbb{P}^{1})=\bar{\mathbb{Q}}(t)$, where $t$ is a variable.
We rewrite the equation 
\begin{equation}\label{eq:uniform_a_b_c_model}
y^2=x(x-1)(x-\frac{b^2}{a^2})
\end{equation}
in the form 
\[y^2=x(x-1)(x-\left(2t/(t^2-1)\right)^2).\]
We use the field isomorphism $\phi:K(C)\rightarrow K(\mathbb{P}^{1})$, which satisfies $\phi(a/c)=(t^2-1)/(t^2+1)$, $\phi(b/c)=2t/(t^2+1)$. This can be deduced from the standard parametrization of the circle by lines. The inverse to this map $\phi^{-1}$ satisfies $\phi^{-1}(t)=b/(c-a)$.

\begin{proposition}
Every $K(C)$-point $(x,y)$ on $E_{1,1,1}$ is represented by three polynomials $k,l,m\in \bar{\mathbb{Q}}[a,b,c]$ that satisfy $x=k/l^2$, $y=m/l^3$ and $\deg k=2+2\deg l$, $\deg m=3+3\deg l$.
\end{proposition}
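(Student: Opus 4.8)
The plan is to pass to $\bar{\mathbb{Q}}(t)\cong K(C)$, put the point into square/cube-denominator form over the polynomial ring $\bar{\mathbb{Q}}[t]$, and then re-homogenize through the degree-$2$ standard parametrization of the conic.

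First I would substitute the standard parametrization $a=t^2-1$, $b=2t$, $c=t^2+1$ into the model $y^2=x(x-a^2)(x-b^2)$ of $E_{1,1,1}$, turning it into $y^2=x\bigl(x-(t^2-1)^2\bigr)(x-4t^2)$ over $\bar{\mathbb{Q}}(t)$, i.e. into the curve $E_{t^2-1,2t,t^2+1}$, whose Weierstrass equation has coefficients in $\bar{\mathbb{Q}}[t]$. A $K(C)$-point of $E_{1,1,1}$ is thereby carried to a point $(x,y)$ with $x,y\in\bar{\mathbb{Q}}(t)$, and by the standard square/cube-denominator lemma for Weierstrass equations over the principal ideal domain $\bar{\mathbb{Q}}[t]$ (cf. \cite[Chap. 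VIII]{Silverman_arithmetic}) I may write $x=A/D^2$ and $y=B/D^3$ with $A,B,D\in\bar{\mathbb{Q}}[t]$, $D$ monic, $\gcd(A,D)=\gcd(B,D)=1$. (If $x\equiv 0$, a $2$-torsion point, one simply takes $k=0$; so assume $x\neq 0$.)

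The next step is homogenization. The substitution homomorphism $\psi^{\ast}\colon F(a,b,c)\mapsto F(t^2-1,2t,t^2+1)$ has kernel exactly the ideal $(a^2+b^2-c^2)$ of the irreducible conic $C$, so in each degree $d$ it induces an injection of the degree-$d$ part $R_d$ of $\bar{\mathbb{Q}}[a,b,c]/(a^2+b^2-c^2)$ into the space of polynomials in $t$ of degree $\le 2d$; as both spaces have dimension $2d+1$, this map is an isomorphism (equivalently: $C$ is embedded by $\mathcal{O}(2)$ and is projectively normal, so $H^0(C,\mathcal{O}_C(d))\xrightarrow{\ \sim\ }H^0(\mathbb{P}^1,\mathcal{O}(2d))$). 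I would then choose an integer $e$ large enough that $2e\ge\deg D$, $4e+4\ge\deg A$ and $6e+6\ge\deg B$ (each holds for all $e\gg 0$), and use the isomorphism above to produce homogeneous forms $l,k,m\in\bar{\mathbb{Q}}[a,b,c]$ of degrees $e$, $2e+2$, $3e+3$ with $\psi^{\ast}(l)=D$, $\psi^{\ast}(k)=A$, $\psi^{\ast}(m)=B$ (lift the relevant classes in $R_{\bullet}$ to arbitrary homogeneous representatives). Since $\psi^{\ast}$ is a ring homomorphism, substituting back $a=t^2-1$, $b=2t$, $c=t^2+1$ gives $k/l^2=A/D^2=x$ and $m/l^3=B/D^3=y$, while $\deg k=2e+2=2+2\deg l$ and $\deg m=3e+3=3+3\deg l$, which is exactly the claim.

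I expect the only delicate point to be the degree bookkeeping: restricting a ternary form of degree $d$ to the conic doubles its degree (the conic is a degree-$2$ curve), and that doubling is precisely what produces the shifts $2$ and $3$ in $\deg k=2+2\deg l$, $\deg m=3+3\deg l$. Note that no sharp estimate on the section is required, since one is free to enlarge $e$; should one want the smallest admissible $\deg l$, one uses that the associated elliptic surface is a K3 ($\chi=2$) with globally minimal model, so examining the fibre over $t=\infty$ forces $\deg A\le 2\deg D+4$ and $\deg B\le 3\deg D+6$, after which $\deg l=\lceil\deg D/2\rceil$ works.
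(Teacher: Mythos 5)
Your proof is correct and is essentially the paper's own (very terse) argument with the details filled in: transport the point to $\bar{\mathbb{Q}}(t)\cong K(C)$ via the standard parametrization together with the scaling $(x,y)\mapsto(xa^{2},ya^{3})$, apply the square/cube-denominator lemma over the PID $\bar{\mathbb{Q}}[t]$, and homogenize back using the fact that restricting a degree-$d$ form to the conic doubles the degree in $t$, which is precisely the source of the shifts $2$ and $3$ in $\deg k=2+2\deg l$, $\deg m=3+3\deg l$. One small inaccuracy: the kernel of the full substitution homomorphism $\psi^{\ast}:\bar{\mathbb{Q}}[a,b,c]\to\bar{\mathbb{Q}}[t]$ is strictly larger than $(a^{2}+b^{2}-c^{2})$ (it contains, for instance, $c-a-2$), but all you need --- and what your dimension count and projective-normality parenthetical do establish --- is that a \emph{homogeneous} form killed by $\psi^{\ast}$ is divisible by $a^{2}+b^{2}-c^{2}$, so each graded piece maps isomorphically onto polynomials of degree at most $2d$.
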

\begin{proof}
The proof follows from the definition of $\phi$. We observe that each $K(C)$-point on $E_{1,1,1}$ is obtained from the point on \eqref{eq:uniform_a_b_c_model} by a linear change of variables $(x,y)\mapsto (x a^2, y a^3)$. 
\end{proof}

\subsection{Two isogeny} % 1 strona
We observe that the curve
\[E_{f,g}: y^2=x(x-f^2)(x-g^2)\]
admits two--isogenies defined over the field $K$ of definition of elements $f,g$. The isogenous curve $E_{f,g}/\langle T\rangle$ for $T\in E_{f,g}[2]$ is not always of the form above if the kernel contains only $(f^2,0)$ or by symmetry $(g^2,0)$. We analyze the isogeny with kernel $\{O,(0,0)\}$. It is easy to determine it explicitly by Velu formulas \cite{Velu} and an explicit computation in MAGMA. Consider the curve
\[E_{i(f-g),i(f+g)}: y^2=x(x+(f-g)^2)(x+(f+g)^2)\]
which is isomorphic to $E_{f,g}/\langle (0,0)\rangle$ over $K$ with isomorphism $(x,y)\mapsto (x+f^2+g^2,y)$. The two-isogeny $\tau: E_{f,g}\rightarrow E_{f,g}/\langle (0,0)\rangle$ is given by the formula
\[\tau(x,y)=((x^2 + f^2 g^2) / x , (x^2 y - f^2 g^2 y) / x^2).\]
If we allow the relation $f^2+g^2=h^2$, then $-(f-g)^2-(f+g)^2=-2h^2$. Both conics $a^2+b^2=c^2$ and $-a^2-b^2=-2c^2$ can be easily parametrized properly with polynomials in $\mathbb{Q}(t)$, however the curves are not $\mathbb{Q}$-isomorphic. The existence of $\mathbb{Q}(t)$-isogeny implies that we have exactly the same rank of Mordell-Weil  groups over $\mathbb{Q}(t)$ for both curves associated with those conics.

\section*{Acknowledgements}
The author would like to express his gratitude to the organisers of the ALANT 2014 conference for allowing him to speak about his results and stimulating atmosphere of the conference. He also thanks Remke Kloosterman for his suggestion of proof in \S \ref{subsec:uniform_a_b_c_model_par}. The author was supported by the National Science Centre Poland research grant 2012/05/N/ST1/02871 and by the DFG grant Sto299/11-1 within the framework of the Priority Programme SPP 1489.

\bibliography{bibliography}
\bibliographystyle{amsplain}

\end{document}